\newcommand{\name}[1]{#1}
\newcommand{\tech}[1]{\textsf{#1}}
\newcommand{\bolda}{\mathbf{a}}
\newcommand{\boldb}{\mathbf{b}}
\newcommand{\boldc}{\mathbf{c}}
\newcommand{\bolde}{\mathbf{e}}
\newcommand{\boldq}{\mathbf{q}}
\newcommand{\boldx}{\mathbf{x}}
\newcommand{\boldy}{\mathbf{y}}
\newcommand{\boldalpha}{\boldsymbol{\alpha}}
\newcommand{\boldzero}{\boldsymbol{0}}
\newcommand{\imag}{\mathbf{i}}
\newcommand{\N}{\mathbb{N}}
\newcommand{\Z}{\mathbb{Z}}
\newcommand{\R}{\mathbb{R}}
\newcommand{\Q}{\mathbb{Q}}
\newcommand{\C}{\mathbb{C}}
\newcommand{\V}{\mathbb{V}}
\newcommand{\inner}[2]{\left\langle #1, #2 \right\rangle}
\DeclareMathOperator{\conv}{conv}
\DeclareMathOperator{\supp}{supp}
\DeclareMathOperator{\init}{init}
\DeclareMathOperator{\vvol}{vol}
\DeclareMathOperator{\mvol}{MV}
\newcommand{\revision}[1]{#1}
\begin{document}

% \title{Unmixing the mixed volume computation}
\title{Unmixing the mixed volume computation\thanks{%
    Research partially supported by an AMS-Simons Travel Grant 
    NSF Grant DMS 1115587, \revision{and the Auburn University Grant-In-Aid program}.
}}
\author{Tianran Chen}

%\authorrunning{Short form of author list} % if too long for running head

\institute{Tianran Chen 
	\at Department of Mathematics and Computer Science,
	Auburn University at Montgomery,
	Montgomery Alabama USA
	\email{ti@nranchen.org}
}

%\date{Received: date / Accepted: date}
% The correct dates will be entered by the editor

\maketitle

\begin{abstract}
    Computing mixed volume of convex polytopes is an important problem 
    in computational algebraic geometry.
    This paper establishes sufficient conditions under which 
    the mixed volume of several convex polytopes exactly equals the 
    normalized volume of the convex hull of their union.
    Under these conditions the problem of computing mixed volume of several
    polytopes can be transformed into a volume computation problem for 
    a single polytope in the same dimension.
    We demonstrate through problems from real world applications 
    that substantial reduction in computational costs can be achieved via
    this transformation in situations where the convex hull of the union
    of the polytopes has less complex geometry than the original polytopes.
	We also discuss the important implications of this result in the 
	polyhedral homotopy	method for solving polynomial systems.
	\keywords{
        convex polytope \and 
        Newton polytope \and 
        mixed volume \and 
        BKK bounds \and 
        semi-mixed systems \and
        \revision{power-flow equations} \and
        \revision{Kuramoto model} \and
        \revision{tensor eigenvalues}}
	\subclass{52B55 \and 65H10 \and 65H20}
\end{abstract}

%===============================================================================
\section{Introduction} \label{sec:intro}

The concept of mixed volume \cite{minkowski_theorie_1911} arises naturally in 
the interplay between \name{Minkowski} sum and volume in the study of convex polytopes.
For convex polytopes $Q_1,\dots,Q_n \subset \R^n$, their \emph{mixed volume} 
is the coefficient of $\lambda_1 \cdots \lambda_n$ in
$\vvol_n(\lambda_1 Q_1 + \cdots + \lambda_n Q_n)$,
%is a polynomial in 
%$\lambda_1,\dots,\lambda_n \in \R^+$.
%In it, the coefficient of $\lambda_1 \cdots \lambda_n$ is the 
%\emph{mixed volume} of $Q_1,\dots,Q_n$, 
denoted $\mvol(Q_1,\dots,Q_n)$.
%Mixed volume is an important concept in combinatorial geometry
%(see e.g. \cite{betke_mixed_1992,ewald_minkowski_1996}).
\name{D. Bernshtein} established a crucial connection 
between mixed volume and algebraic geometry: 
The number of isolated complex solutions with nonzero coordinate of a 
Laurent polynomial system is bounded above by the mixed volume of the 
\name{Newton} polytopes of the equations
\cite{bernshtein_number_1975,bernshtein_newton_1976,khovanskii_newton_1978,kushnirenko_newton_1976}.
This result has sparked the development of homotopy continuation methods 
\cite{huber_polyhedral_1995,verschelde_homotopies_1994}.
In particular, \name{B. Huber} and \name{B. Sturmfels} developed a new method 
for computing mixed volume via \emph{mixed subdivision} which produces an important 
by-product --- the \emph{polyhedral homotopy method} \cite{huber_polyhedral_1995} 
for solving polynomial systems.
Subsequently, mixed volume computation has became an important research problem 
in computational algebraic geometry
\cite{chen_mixed_2014,chen_mixed_2017,chen_homotopy_2015,emiris_efficient_1995,gao_mixed_2000,gao_mixed_2003,gao_algorithm_2005,jensen_computing_2008,lee_mixed_2011,li_finding_2001,malajovich_computing_2016,michiels_enumerating_1999,mizutani_demics:_2008,mizutani_dynamic_2007,verschelde_mixed-volume_1996}.

\begin{wrapfigure}[14]{r}{0.36\textwidth}
    \centering
    \includegraphics[width=0.36\textwidth]{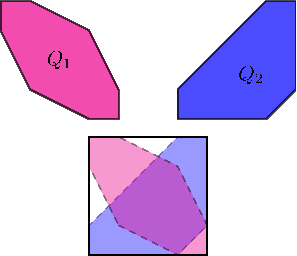}
    \caption{Two polygons and the convex hull of their union}
    \label{fig:example1}
\end{wrapfigure}
The computation of $\mvol(Q_1,\dots,Q_n)$ can be greatly simplified when 
some of the polytopes are identical, known as \emph{semi-mixed cases} 
\cite{gao_mixed_2003,huber_polyhedral_1995}.
The extreme case where all polytopes are identical, i.e., 
$Q_1 = \cdots = Q_n$, is known as the \textit{unmixed case}
which is equivalent to volume computation in the sense that 
$\mvol(Q,\dots,Q) = n! \vvol_n(Q)$.
Algorithms for calculating volume of polytopes can therefore be used in 
such unmixed cases.
The main goal of the present contribution is to establish conditions under which 
mixed volume computation can be turned into unmixed cases in the same dimension.
That is, we state conditions under which
$\mvol(Q_1,\dots,Q_n) = n! \vvol_n(\conv(Q_1 \cup \dots \cup Q_n))$
where ``$\conv$'' denotes \revision{the operation of taking} convex hull.
%that is, when the mixed volume computation for $n$ polytopes is equivalent
%to the volume computation for the convex hull of their union.
%Note that by the monotonicity of mixed volume, it is always true that
%$\mvol(Q_1,\dots,Q_n) \le n! \vvol_n(\conv(Q_1\cup\dots\cup Q_n))$,
%but the equality will not hold in general, since the left hand side is invariant
%under any translations on each of $Q_1,\dots,Q_n$ while the right hand side is not.

\begin{example}
    In $\R^2$, consider, the two convex polytopes (polygons)
    \begin{align*}
        Q_1 &= \conv \{ (1,1),(3,0),(4,0),(4,1),(3,3),(1,4),(0,4),(0,3) \} \\
        Q_2 &= \conv \{ (0,1),(0,0),(3,0),(4,1),(4,4),(3,4) \}
    \end{align*}
	in Fig. \ref{fig:example1}.
    We can verify that %in this case
    %\[
        $\mvol(Q_1,Q_2) = 2 \vvol_2(\conv(Q_1 \cup Q_2)) = 32$.
    %\]
	The monotonicity of mixed volume implies that
	$\mvol(Q_1,Q_2) \le 2 \vvol_n(\conv(Q_1 \cup Q_2))$,
	but the equality will not hold in general, since one side is invariant
	under translations of $Q_1$ and $Q_2$ while the other is not.
	Understanding when and why this equality would hold is our main goal.
    Here, turning $\mvol(Q_1,Q_2)$ into $2\vvol(\conv(Q_1 \cup Q_2))$ 
    is computationally beneficial since $\conv(Q_1 \cup Q_2)$,
    having only 4 vertices, is significantly less complicated than 
    both $Q_1$ and $Q_2$.
    %:
    %While $Q_1$ and $Q_2$ have 8 and 6 vertices respectively, 
    %$\conv(Q_1 \cup Q_2)$ has only 4 vertices.
\end{example}

%\begin{figure}[h] %{r}{0.4\textwidth}
%    \centering
%    \begin{subfigure}[t]{0.3\textwidth}
%        \centering
%        \begin{tikzpicture}[scale=0.5]
%        \draw[black,thick,fill=orange,opacity=0.7] 
%        (1,1)--(3,0)--(4,0)--(4,1)--(3,3)--(1,4)--(0,4)--(0,3)--cycle;
%        \end{tikzpicture}
%        \caption{$Q_1$}
%    \end{subfigure}%
%    ~ 
%    \begin{subfigure}[t]{0.3\textwidth}
%        \centering
%        \begin{tikzpicture}[scale=0.5]
%        \draw[black,thick,fill=blue,opacity=0.7] 
%        (0,1)--(0,0)--(3,0)--(4,1)--(4,4)--(3,4)--cycle;
%        \end{tikzpicture}
%        \caption{$Q_2$}
%    \end{subfigure}
%    ~
%    \begin{subfigure}[t]{0.3\textwidth}
%        \centering
%        \begin{tikzpicture}[scale=0.5]
%        \draw[black,dashed,fill=blue,opacity=0.3] 
%        (0,1)--(0,0)--(3,0)--(4,1)--(4,4)--(3,4)--cycle;
%        \draw[black,dashed,fill=orange,opacity=0.3] 
%        (1,1)--(3,0)--(4,0)--(4,1)--(3,3)--(1,4)--(0,4)--(0,3)--cycle;
%        \draw[black,thick] (0,0) -- (4,0) -- (4,4) -- (0,4) --cycle;
%        \end{tikzpicture}
%        \caption{$\conv(Q_1 \cup Q_2)$}
%    \end{subfigure}
%    \caption{Two convex polytopes (polygons) and the convex hull of their union}
%    \label{fig:example1}
%\end{figure}

We shall establish certain sufficient conditions under which
$\mvol(Q_1,\dots,Q_n)$ exactly equals 
$n! \vvol_n(\conv(Q_1 \cup \dots \cup Q_n))$.
They can be summarized into the following theorems
which clearly apply to the above example.
As we shall note in Remark \ref{rmk:automatic}, 
these conditions can, in principle, be checked automatically as by-products of
the process of computing the volume of $\conv(Q_1 \cup \dots \cup Q_n)$.
%Suppose $S_i \subset \R^n$ is the set of vertices of $Q_i$ for $i = 1,\dots,n$.
%Let $\tilde{S} = S_1 \cup \cdots \cup S_n$, 
%since each $\conv S_i \subseteq \conv \tilde{S}$,
%by the nondecreasing property of the mixed volume,
%\begin{equation*}
%    \mvol (\conv S_1, \dots, \conv S_n) \le 
%    \mvol(\conv\tilde{S},\dots,\conv\tilde{S}) =
%    \mvol ((\conv \tilde{S})^{(n)}).
%\end{equation*}

\begin{restatable}{theorem}{mainthma}
    \label{thm:mainthma}
    For finite sets $S_1,\dots,S_n \subset \Q^n$, 
    let $\tilde{S} = S_1 \cup \cdots \cup S_n$.
    If for every proper positive dimensional face $F$ of $\conv(\tilde{S})$ 
    we have $F \cap S_i \ne \varnothing$ for each $i = 1, \dots, n$ then
    $\mvol (\conv S_1, \dots, \conv S_n) = n! \vvol_n (\conv(\tilde{S}))$.
\end{restatable}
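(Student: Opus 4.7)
The plan is to show both $\mvol(\conv S_1, \dots, \conv S_n) \le n!\, \vvol_n(\conv \tilde S)$ and the reverse inequality. The upper bound is immediate from monotonicity of mixed volume: since $\conv S_i \subseteq \conv \tilde S$ for every $i$,
\[
\mvol(\conv S_1, \dots, \conv S_n) \le \mvol(\conv \tilde S, \dots, \conv \tilde S) = n!\, \vvol_n(\conv \tilde S),
\]
and the hypothesis is not needed at this step. (If $\conv \tilde S$ has dimension less than $n$, both sides of the target identity vanish, so we may assume $\conv \tilde S$ is $n$-dimensional.)

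For the reverse inequality the plan is to use a one-parameter polynomial deformation together with Bernstein's theorem. Fix a generic Laurent system $f = (f_1,\dots,f_n)$ with $\supp f_i = S_i$ and consider
\[
f_i^{(s)}(\boldx) = f_i(\boldx) + s \sum_{a \in \tilde S \setminus S_i} d_{i,a}\,\boldx^a, \quad d_{i,a}\text{ generic}.
\]
For $s \ne 0$ the system $f^{(s)}$ has common support $\tilde S$ with generic coefficients, so by the common-support specialization of Bernstein's theorem (due to Kushnirenko) it has exactly $n!\,\vvol_n(\conv \tilde S)$ zeros in $(\C^*)^n$. If none of these zeros escapes to the toric boundary as $s \to 0$, then the limit system $f = f^{(0)}$ inherits at least $n!\,\vvol_n(\conv \tilde S)$ zeros in $(\C^*)^n$ counted with multiplicity; combining with the Bernstein upper bound $\#(f) \le \mvol(\conv S_1,\dots,\conv S_n)$ gives the matching lower bound.

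The crucial step, and where the hypothesis enters, is the no-escape claim. By the standard Puiseux-series analysis an escaping branch $\boldx(s) = s^{\boldalpha}\boldy + \text{h.o.t.}$ with $\boldalpha \in \Q^n \setminus \{\boldzero\}$ and $\boldy \in (\C^*)^n$ yields a torus solution of the leading-order initial system at $\boldalpha$, whose $i$-th support $A_i(\boldalpha) \subseteq \tilde S$ consists of the minimizers of $\inner{\boldalpha}{\cdot}$ under the lift assigning weight $0$ to $a \in S_i$ and weight $1$ to $a \in \tilde S \setminus S_i$. Applying Bernstein's bound to the initial system itself, no such torus solution exists for generic coefficients provided $\mvol(\conv A_1(\boldalpha),\dots,\conv A_n(\boldalpha)) = 0$, which I claim follows from the hypothesis. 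Let $\hat F(\boldalpha)$ denote the face of $\conv \tilde S$ minimizing $\inner{\boldalpha}{\cdot}$, a proper face because $\boldalpha \ne \boldzero$ and $\conv \tilde S$ is $n$-dimensional. If $\dim \hat F(\boldalpha) \ge 1$, the hypothesis yields $A_i(\boldalpha) = S_i \cap \hat F(\boldalpha) \subseteq \hat F(\boldalpha)$ for every $i$, so $\sum_i \lambda_i \conv A_i(\boldalpha)$ lies in a translate of the affine span of $\hat F(\boldalpha)$, an affine subspace of dimension at most $n-1$, and its $n$-dimensional volume vanishes identically in $\lambda$. If $\hat F(\boldalpha) = \{v\}$ is a vertex, pick $i_0$ with $v \in S_{i_0}$ (possible since $v \in \tilde S$); the same calculation gives $A_{i_0}(\boldalpha) = \{v\}$, and any mixed volume with a single point as one argument vanishes by translation-invariance of volume. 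The main technical obstacle I anticipate lies not in this geometric case analysis but in the Puiseux / tropical bookkeeping that certifies the escape count is exactly captured by these initial systems; this is classical polyhedral-homotopy material (going back to \name{Bernshtein} and \name{Huber}--\name{Sturmfels}) and I would invoke it rather than redevelop the machinery here.
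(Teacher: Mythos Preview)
Your argument is correct but follows a genuinely different route from the paper's. The paper replaces $P$ by a linear randomization $A\cdot P$ with $A\in\mathrm{GL}_n$ chosen so that no cancellations occur: this produces a system with identical support $\tilde S$ in every equation while leaving the solution set unchanged, and then Bernshtein's Second Theorem is applied directly to $A\cdot P$. The hypothesis enters through the identity $\init_{\boldalpha}(A\cdot P)=A\cdot\init_{\boldalpha}(P)$, valid whenever the face $F=(\conv\tilde S)_{\boldalpha}$ meets every $S_i$ (so that $(S_i)_{\boldalpha}=F\cap S_i$ by Lemma~\ref{lem:face}); since $P$ is in general position this forces $\init_{\boldalpha}(A\cdot P)$ to have no torus zeros, and the vertex case is handled by the trivial observation that each equation of $\init_{\boldalpha}(A\cdot P)$ is then a single monomial. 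This is a \emph{static} argument: no deformation, no branch tracking, no Puiseux expansions. Your approach instead builds a one-parameter family via the specific lifting $\omega_i(a)=[a\notin S_i]$ and controls the loss of solutions as $s\to 0$ by bounding the BKK number of every possible initial system. Both proofs arrive at the same face-by-face dichotomy, but the paper's randomization trick sidesteps the toric-degeneration bookkeeping you defer to in your last paragraph, at the cost of being less directly tied to the polyhedral homotopy interpretation that motivates the result. One small omission on your side: Laurent polynomials require integer exponents, so you should first treat $S_i\subset\Z^n$ and then pass to $S_i\subset\Q^n$ by observing that both sides of the asserted equality are homogeneous of degree $n$ under a uniform positive rescaling of all the $S_i$.
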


%\noindent Clearly, this theorem applies to the above example.
%This condition can be interpreted geometrically as the requirement that
%each $S_i$ is ``sufficiently exposed'' within $\conv(\tilde{S})$.
%\noindent 
%This condition can be relaxed significantly: 

\begin{restatable}{theorem}{mainthmb}
    \label{thm:mainthmb}
    Given nonempty finite sets $S_1,\dots,S_n \subset \Q^n$, 
    let $\tilde{S} = S_1 \cup \cdots \cup S_n$.
    If every positive dimensional face $F$ of $\conv(\tilde{S})$ satisfies 
    one of the following conditions:
    \begin{description}[leftmargin=5ex]
        \item[(A)] 
            $F \cap S_i \ne \varnothing$ for all $i \in \{1,\dots,n\}$;
        
        \item[(B)] 
            $F \cap S_i$ is a singleton for some $i \in \{1,\dots,n\}$;
                    
        \item[(C)]
            For each $i \in I := \{ i \mid F \cap S_i \ne \varnothing \}$,
            $F \cap S_i$ is contained in a common coordinate subspace of
            dimension $|I|$, and the projection of $F$ to this subspace is
            of dimension less than $|I|$; 
    \end{description}
    then $\mvol (\conv(S_1), \dots, \conv(S_n)) = n! \vvol_n (\conv(\tilde{S}))$.
\end{restatable}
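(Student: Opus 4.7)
The plan is to interpret both sides of the claimed identity as generic counts of isolated torus solutions of Laurent polynomial systems, and to connect the two counts via a coefficient homotopy whose solution count is preserved under conditions (A), (B), (C). By Kushnirenko's theorem, a generic Laurent system $(\tilde{f}_1,\ldots,\tilde{f}_n)$ with $\supp(\tilde{f}_i) = \tilde{S}$ for each $i$ has exactly $n!\,\vvol_n(\conv\tilde{S})$ isolated solutions in $(\C^*)^n$. By Bernstein's theorem, a generic Laurent system $(f_1,\ldots,f_n)$ with $\supp(f_i) = S_i$ has at most $\mvol(\conv S_1,\ldots,\conv S_n)$ such solutions, with equality exactly when every face subsystem $f_1^{\nu} = \cdots = f_n^{\nu} = 0$, for $\nu \in \R^n \setminus \{\boldzero\}$, has no common solution in $(\C^*)^n$. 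The straight-line homotopy $h_i(t, x) = f_i(x) + t \sum_{a \in \tilde{S} \setminus S_i} c_{i,a} x^a$ interpolates between the two systems, and its solution count is preserved across $t$ under the same face-system condition: no root escapes to the boundary of $(\C^*)^n$. So the entire proof reduces to verifying the face-system condition for every positive dimensional face $F$ of $\conv\tilde{S}$, noting that the $0$-dimensional case is automatic because some $f_i^{\nu}$ is then a single nonzero monomial.

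Fix nonzero $\nu$, let $F$ be the face of $\conv\tilde{S}$ maximizing $\langle\nu,\cdot\rangle$, and set $I = \{i : S_i \cap F \neq \varnothing\}$; then $Q_i^{\nu} = \conv(S_i \cap F) \subseteq F$ for each $i \in I$. Under (A), $I = \{1,\ldots,n\}$ and every $f_i^{\nu}$ is supported in the affine hyperplane spanning $F$; after a monomial change of variables the face system becomes $n$ polynomials in $n-1$ coordinates, which has no common solution in $(\C^*)^{n-1}$ for generic coefficients. Under (B), the polynomial $f_i^{\nu}$ for the index with $|S_i \cap F| = 1$ is a single nonzero monomial, vanishing nowhere in $(\C^*)^n$. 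Under (C), with $k = |I|$ and $W$ the $k$-dimensional coordinate subspace containing each $F \cap S_i$, the vertices of $F$ all lie in $W$ so $F \subseteq W$; the face polynomials $f_i^{\nu}$ for $i \in I$ therefore depend only on the $k$ coordinates indexing $W$, and their supports all lie in $F$, a polytope of dimension less than $k$ inside $\R^k$. Bernstein's theorem applied to this $k$-equation subsystem in $\R^k$ gives mixed volume zero, hence no common solution in $(\C^*)^k$, which rules out solutions of the full face system in $(\C^*)^n$.

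The main obstacle is case (C), which relies on the auxiliary fact that $k$ generic Laurent polynomials in $k$ variables with supports in a common polytope of dimension less than $k$ have no torus solutions. This follows from translating the supports into a common linear hyperplane of $\R^k$ via monomial multiplication and observing that the corresponding mixed volume in $\R^k$ vanishes. A secondary technical point is arranging simultaneous genericity for Kushnirenko, Bernstein, and the no-escape property of the homotopy; since each is a nonempty Zariski-open condition on coefficient space, the common intersection is also nonempty and dense.
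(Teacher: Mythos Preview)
There is a genuine gap at the homotopy step. The face-system analysis you carry out checks that $(f_1^\nu,\ldots,f_n^\nu)$ has no torus solution for each nonzero $\nu$; but this is exactly Bernstein's genericity condition for the system $f$ itself, which holds for generic coefficients \emph{regardless} of (A), (B), (C) and certifies only that $|\V_0^*(f)| = \mvol(\conv S_1,\ldots,\conv S_n)$. It does not certify that the solution count of $h(t,\cdot)$ is constant in $t$. For $t\in(0,1]$ each $h_i(t,\cdot)$ has support $\tilde S$, so its initial form $\init_\nu h_i(t,\cdot)$ is supported on all of $F\cap\tilde S$, not on $F\cap S_i$; in particular your case-(B) argument fails, since $\init_\nu h_i(t,\cdot)$ is no longer a single monomial once $t\neq 0$. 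Worse, as $t\to 0$ the coefficients on $\tilde S\setminus S_i$ degenerate to zero, and ruling out escape of solution paths at that instant requires analyzing initial forms of the \emph{lifted} supports $\{(a,0):a\in S_i\}\cup\{(a,1):a\in\tilde S\setminus S_i\}$ with respect to directions $(\alpha,1)$ --- a polyhedral-homotopy computation that you have not carried out and that does not reduce to the three cases as you have stated them.

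The paper sidesteps the homotopy entirely by setting $\tilde f := A\cdot f$ for a generic nonsingular $n\times n$ matrix $A$, so that $\V_0^*(\tilde f)=\V_0^*(f)$ holds \emph{exactly}, by linear algebra, with no limiting argument. Each component of $\tilde f$ then has support $\tilde S$, and the remaining task is to verify Bernstein's condition for $\tilde f$. Here the coefficients of $\tilde f$ are \emph{not} independent generics --- they are $A$-linear combinations of the coefficients of $f$ --- and this is precisely where (A), (B), (C) do real work: under (A) one gets $\init_\nu(A\cdot f)=A\cdot\init_\nu(f)$, and under (B) or (C) one gets $\init_\nu(A\cdot f)=A_I\cdot\init_\nu(f_I)$ with $A_I$ of full column rank, reducing the question to whether $\init_\nu(f_I)$ has a torus solution, which your singleton and low-dimensional-projection arguments then correctly rule out. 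So your case analysis is morally the right endgame, but it must be attached to the randomized system $A\cdot f$, not to a coefficient homotopy.
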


These theorems transform the mixed volume of $n$ polytopes into 
normalized volume of a single polytope in the same ambient space
--- the convex hull of their union.
Computationally, the potential advantage is three-fold:
\begin{enumerate}
    \item 
        When some of the $S_i$'s contain common points, such redundancy is 
        removed in the union $\tilde{S} := S_1 \cup \cdots \cup S_n$ 
        in the sense that $|\tilde{S}| < |S_1| + \cdots + |S_n|$.
        Since the number of vertices plays an important role in the complexity 
        of algorithms for manipulating polytopes,
        it may be much more efficient to study $\conv(\tilde{S})$ in such cases.%,
%        especially when $|\tilde{S}| \ll |S_1| + \cdots + |S_n|$.
        %as we shall demonstrate in \S \ref{sec:cases}.

    \item
        In forming the union $\tilde{S} := S_1 \cup \cdots \cup S_n$,
        certain vertices of some of the $\conv(S_i)$ may no longer be vertices
        of $\conv(\tilde{S})$ and hence can be ignored in computing
        $n! \vvol(\conv(\tilde{S}))$.
        
    \item
        Currently, there appear to be a greater variety of efficient %and scalable 
        algorithms for volume computation than for mixed volume computation
        %e.g., \cite{avis_pivoting_1992,avis_reverse_1993,avis_revised_2000,avis_mplrs_2015}
        (see \cite{bueler_exact_2000} and \cite{chen_homotopy_2015}).
\end{enumerate}
The combined effect of these computational advantages can lead to substantial
reduction of computational costs for certain problems %from real-world applications
as we shall demonstrate. % in \S \ref{sec:cases}.
\revision{Moreover, the equivalence of mixed volume and normalized volume
is likely to lead to alternative algorithms for approximating mixed volume
since there are well-studied polynomial time algorithms
for approximating volume in general.}

\revision{Of course, this transformation can also be used in reverse:
The volume of a single polytope could be reduced to mixed volume of
several simpler polytopes which can potentially be easier to compute. 
Indeed, this idea was used in the author's recent work~\cite{ChenDavis2017Product}
(with Robert Davis) to show that the normalized volume of a free sum of two polytopes
is simply the product of their normalized volume.
}

\revision{Interestingly, this problem is also studied by Fr\'ed\'eric Bihan 
and Ivan Soprunov around the same time~\cite{BihanSoprunov2017}
from a different point of view.
In particular, Theorem~\ref{thm:mainthma} turns out to be a special case
of their more general results.}
%Particularly noteworthy is one experiment where, in computing the mixed volume,
%the above transformations resulted in over 100 fold reduction in CPU time
%and nearly 1000 fold reduction in memory usage!
%\begin{remark}
%    Theorem \ref{thm:mainthma} and \ref{thm:mainthmb} may be compared to the 
%    well known Cayley's trick \cite{gelfand_discriminants_1994,sturmfels_newton_1994}
%    which also turn mixed volume computation into volume computation.
%    Indeed, Cayley's trick establishes an one-to-one correspondence between
%    the coherent fine mixed subdivisions of $Q_1 + \cdots + Q_n$ and
%    the coherent subdivisions of the Cayley's embedding.
%    However, it is worth emphasizing the key differences:
%    (a) Cayley's embedding results in a polytope in a space of twice the dimension
%    whereas Theorem \ref{thm:mainthma} and \ref{thm:mainthmb} work in the same 
%    ambient space;
%    (b) Common vertices of the original polytopes are mapped into distinct points
%    in Cayley's embedding, and hence Cayley's trick cannot direct take advantage
%    of the shared vertices of among the polytopes.
%\end{remark}
%The conditions established in the above theorems can also be generalized to 
%``semi-mixed'' settings and allow one to transform mixed volume into
%mixed volume of semi-mixed type.
%That is, we shall also establish the conditions under which mixed volume
%of fully-mixed type is equal to mixed volume of semi-mixed type.

This paper is structured as follows: 
\revision{
    \S \ref{sec:motivation} shows a geometric the connection
    between mixed volume and volume via a simple example.
}
\S \ref{sec:notations} reviews concepts to be used.
The main proofs are given in \S \ref{sec:proofs}.
They are generalized to the semi-mixed cases in \S \ref{sec:semimix}.
Their applicability are demonstrated in \S \ref{sec:cases} via problems
from real-world applications.
\S \ref{sec:results} highlights the potentially substantial reduction in 
computational costs achieved by these transformations.
Finally, \S \ref{sec:homotopy} explores the implication in the 
polyhedral homotopy method for solving polynomial systems.
%Finally, \S \ref{sec:toric} gives a geometric interpretation of the main results
%from the point of view of toric algebraic geometry.

%=== Geometric interpretation ==================================================
\section{\revision{A geometric motivation}}\label{sec:motivation}
\revision{
% \begin{wrapfigure}[14]{r}{0.31\textwidth}
%     \centering
%     \includegraphics[width=0.30\textwidth]{simple-1}
%     \caption{An example that satisfies the conditions in Theorem~\ref{thm:mainthma}}
%     \label{fig:simple}
% \end{wrapfigure}
Before presenting the proofs, we use a simple example to show
intuitively why there could be any connections between mixed volume
of several polytopes and the normalized volume of the convex hull of their union at all.
% While the author was unable to construct a general proof directly based on 
% this geometric observation.

Consider the simple 2-dimensional example shown in Figure~\ref{fig:simple-mcells} with 
\begin{align*}
    Q_1 &= \{ (0,0), (0,2), (2,0), (2,2) \} \\
    Q_2 &= \{ (0,0), (1,2), (2,1) \}
\end{align*}
which satisfies the assumptions in Theorem~\ref{thm:mainthma}:
all edges of $\conv(Q_1 \cup Q_2)$ intersect with both $Q_1$ and $Q_2$.
In Figure~\ref{fig:simple-mcells}, we can easily see that the 
Minkowski sum $Q_1 + Q_2$ contains a copy of $Q_1$ and a copy of $Q_2$.
Under the scaling of $Q_1 \mapsto \lambda_1 Q_1$ and $Q_2 \mapsto \lambda_2 Q_2$,
the area of those copies of $Q_1$ and $Q_2$ are scaled by 
$\lambda_1^2$ and $\lambda_2^2$ respectively.
The remaining regions whose area will scale with the factor $\lambda_1 \cdot \lambda_2$
are known as \emph{mixed cells}.
The sum of the areas of these mixed cells is exactly the mixed volume~\cite{huber_polyhedral_1995}.
In this case, the mixed volume is 8.
\begin{figure}[t]
    \centering
    \includegraphics[width=0.5\textwidth]{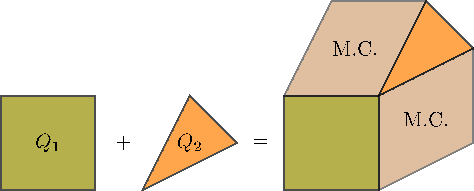}
    \caption{$Q_1$, $Q_2$, the Minkowski sum $Q_1 + Q_2$, and the mixed cells (M.C.) inside it}
    \label{fig:simple-mcells}
\end{figure}

We now examine the growth rate of the normalized volume
$v(\lambda) = 2 \vvol_2(Q_1 \cup \lambda Q_2)$ 
as a function of the positive real scalar $\lambda$.
From Figure~\ref{fig:simple-grow}, 
we can see it is a piecewise function:
For $\lambda \le 1$, $\lambda Q_2$ is contained in $Q_1$,
therefore $v(\lambda)$ remains a constant;
For $1 \le \lambda \le \frac{4}{3}$, vertices of $\lambda Q_2$
start to push out of $Q_1$ while edges remain partially in $Q_1$,
and $v(\lambda)$ grows linearly;
Finally, for $\lambda > \frac{4}{3}$,
an edge of $\lambda Q_2$ leaves $Q_1$,
and $v(\lambda)$ grows quadratically.

\begin{figure}[h]
    \centering
    \subfloat[For $\lambda < 1$, $v(\lambda)$ remains constant]{
        \includegraphics[width=0.155\textwidth]{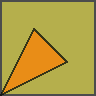}
    }\hspace{5ex}%
    \subfloat[For $1 \le \lambda \le 4/3$, $v(\lambda)$ grows linearly]{
        \includegraphics[width=0.185\textwidth]{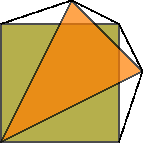}
    }\hspace{5ex}
    \subfloat[For $\lambda = 4/3$]{
        \includegraphics[width=0.205\textwidth]{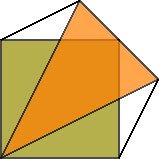}
    }\hspace{5ex}
    \subfloat[For $\lambda > 4/3$, $v(\lambda)$ grows quadratically]{
        \includegraphics[width=0.21\textwidth]{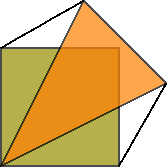}
    }
    \caption{
        The growth rate of $v(\lambda) = 2 \vvol_2(Q_1 \cup \lambda Q_2)$
        for different ranges of $\lambda$.
    }
    \label{fig:simple-grow}
\end{figure}

% \begin{wrapfigure}[12]{r}{0.37\textwidth}
%     \centering
%     \includegraphics[width=0.35\textwidth]{simple-vol}
%     \caption{
%         $v(\lambda)$
%         and $\mvol (Q_1,\lambda Q_2)$
%         as functions of $\lambda$.
%     }
%     \label{fig:simple-vol}
% \end{wrapfigure}
Indeed, it is easy to verify that
\begin{equation*}
    v(\lambda) = 
    2 \vvol_2 (\conv{Q_1 \cup \lambda Q_2}) =
    \begin{cases}
        8 & \text{if } \lambda \le 1 \\
        8 \lambda & \text{if } 1 < \lambda \le \frac{4}{3} \\
        4 \lambda + 3 \lambda^2 & \text{if } \lambda > \frac{4}{3}.
    \end{cases}
\end{equation*}
% From Figure~\ref{fig:simple-vol},
% we can see 
% So within the range $[1,4/3]$ where $v(\lambda)$ grows linearly,
% it coincide with the mixed volume $\mvol(Q_1,\lambda Q_2) = \mvol(Q_1,Q_2) \lambda$.
That is, for any $\lambda \in [1,4/3]$,
\begin{equation*}
    v(\lambda) = 2 \vvol_2(Q_1 \cup \lambda Q_2) = \mvol(Q_1,\lambda Q_2).
\end{equation*}
This interval $[1,4/3]$ for $\lambda$ is precisely the interval for which
the pair $(Q_1,\lambda Q_2)$ satisfies the conditions in Theorem~\ref{thm:mainthma},
i.e., all edges of $\conv(Q_1 \cup Q_2)$ intersect both $Q_1$ and $Q_2$.
This example also shows that the conditions required by Theorem~\ref{thm:mainthma}
are not just very special configurations of two polytopes
but can remain valid under a range of scaling.

% \begin{wrapfigure}[11]{r}{0.30\textwidth}
%     \centering
%     \includegraphics[width=0.25\textwidth]{simple-12}
%     \caption{$\conv(Q_1 \cup 1.2 \cdot Q_2)$.}
%     \label{fig:simple-12}
% \end{wrapfigure}
A more direct connection between the normalized volume and mixed volume
can be visualized through mixed cells (Figure~\ref{fig:simple-mcells}).
Recall that the sum of the area of the mixed cells is precisely the mixed volume.
As shown in Figure~\ref{fig:simple-decomp}, for $\lambda \in [1,4/3]$,
e.g. $\lambda=1.2$ (Figure~\ref{fig:simple-grow}(c))
two copies of $\conv(Q_1 \cup \lambda Q_2)$ can be subdivided and rearranged
to form the two mixed cells of $(Q_1,\lambda Q_2)$ in Figure~\ref{fig:simple-mcells}.
Therefore, the normalized volume $2 \vvol_2(\conv(Q_1 \cup \lambda Q_2))$
is precisely the mixed volume $\mvol(Q_1,\lambda Q_2)$.

\begin{figure}[h]
    \centering
    \includegraphics[width=0.42\textwidth]{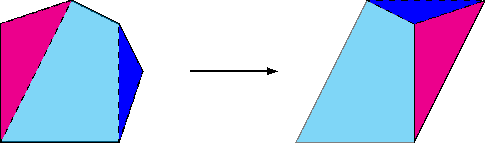}
    \hspace{8ex}%
    \includegraphics[width=0.42\textwidth]{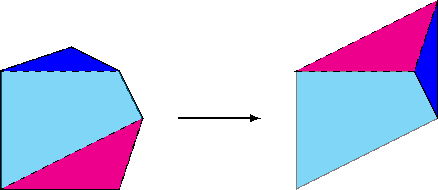}
    \caption{
        Two copies of $\conv(Q_1 \cup \lambda Q_2)$
        rearranged to form the
        mixed cells for $(Q_1,\lambda Q_2)$
    }
    \label{fig:simple-decomp}
\end{figure}

The geometric observation illustrated above shows a potential
connection between the mixed volume of polytopes and the 
volume of the convex hull of their union
under the assumptions of Theorem~\ref{thm:mainthma},
and this is the main motivation behind this study.
However, the author was unable to generalize this to a proof.
Using the theory of Bernshtein-Kushnirenko-Khovanskii bound for
Laurent polynomial systems, this paper presents a purely algebraic proof.
}

%===============================================================================
\section{Preliminaries} \label{sec:notations}

%This section briefly reviews notations and concepts to be used.
%Certain technical discussions are included in the Appendix for completeness.
For a compact set $Q \subset \R^n$, $\vvol_n(Q)$ denotes its standard 
Euclidean volume, and the quantity $n! \vvol_n(Q)$ is known as its 
\emph{normalized volume}.
Given a nonzero vector $\boldalpha \in \R^n$, we also define
$h_{\boldalpha}(Q) := \min \{ \inner{\boldq}{\boldalpha} \mid \boldq \in Q \}$
and
$(Q)_{\boldalpha} := \{ \boldq \in Q \mid \inner{\boldq}{\boldalpha} = h_{\boldalpha}(Q) \}$.
%A set in $\R^n$ is \emph{convex} if the straight line segment 
%connecting any two of its points lie entirely within that set.
%The \emph{convex hull} of a set is the smallest convex set containing it.
A \emph{convex polytope} $P \subset \R^n$ is the convex hull of a finite set.
%and it is always compact.
%The hyperplane 
%$\{ \boldx \in \R^n \mid \inner{\boldx}{\boldalpha} = h_{\boldalpha}(Q) \}$
%is called a \emph{supporting hyperplane} of $Q$.
%For a convex polytope $P$, 
A subset of the form $(P)_{\boldalpha}$ is called a \emph{face} of $P$. 
%\footnote{The empty set and $P$ itself are often also considered to be faces of 
%    a polytope $P$.}
Each face has a well defined \emph{dimension}.
%--- the dimension of the smallest affine space that contains it.
A face of dimension 0 must be a point and is known as a \emph{vertex}.
Other nonempty faces are said to be \emph{positive dimensional}.

%-------------------------------------------------------------------------------
%\subsection{Mixed volume}

The \emph{Minkowski sum} of two sets $A,B \subset \R^n$ 
is $A + B = \{ \bolda + \boldb \mid \bolda \in A, \boldb \in B \}$.
%\textit{Mixed  volume} arise naturally in the interplay of Minkowski sum and 
%volume in the study of convex polytopes (or more general convex bodies).
%Mixed volume can be understood as a function that assigns a non-negative 
%real number to each tuple of convex polytopes that takes into consideration
%both the size of the polytopes and their relative orientation.
Given convex polytopes $Q_1,\dots,Q_n \subset \R^n$, the Minkowski sum 
$\lambda_1 Q_1 + \cdots + \lambda_n Q_n$ is also a convex polytope, 
and its volume is a homogeneous polynomial in the positive scalars
$\lambda_1,\dots,\lambda_n$.
In it, the coefficient\footnote{%
    An alternative definition for mixed volume is the coefficient of
    $\lambda_1 \cdots \lambda_n$ in that polynomial divided by $n!$.
    %This definition has the benefit that $\mvol(Q,\dots,Q) = \vvol_n(Q)$.
}    
of $\lambda_1 \cdots \lambda_n$ is the \emph{mixed volume} \cite{minkowski_theorie_1911} 
%\footnote{%
%    In the more general context, \emph{all} coefficients of
%    $\lambda_1 Q_1 + \cdots + \lambda_n Q_n$ 
%    (not only that of $\lambda_1 \cdots \lambda_n$) are called mixed volumes
%    of $Q_1,\dots,Q_n$.
%    However, these more general concepts of mixed volumes can be still reduced
%    to the basic case defined above via the concept of 
%    \emph{mixed volume of semi-mixed type} \cite{huber_polyhedral_1995}.
%}
of $Q_1,\dots,Q_n$, denoted $\mvol(Q_1,\dots,Q_n)$.
Mixed volume is symmetric, additive, and nondecreasing
(see \S \ref{sec:mixvol}). % in each of its arguments.

%-------------------------------------------------------------------------------
%\subsection{Laurent polynomials}

Though the main results to be established are in the realm of geometry, 
our proofs take a decidedly algebraic approach
via the theory of root counting.
A \emph{Laurent monomial} in $\boldx=(x_1,\dots,x_n)$ induced by vector 
$\bolda=(a_1,\dots,a_n) \in \Z^n$ is the formal expression 
$\boldx^{\bolda} = x_1^{a_1} \cdots x_n^{a_n}$.
%This section therefore review certain algebraic concepts.
%\emph{Laurent polynomials} generalize of polynomials by allowing
%negative exponents:
A \emph{Laurent polynomial} a linear combination of distinct Laurent monomials,
i.e., an expression of the form $p = \sum_{\bolda \in S} c_{\bolda} \boldx^{\bolda}$.
The set $S \subset \Z^n$ collecting all the exponent vectors is known as the 
\emph{support} of $p$, denoted $\supp(p)$,
\revision{and the convex hull $\conv(S)$ is known as the \emph{Newton polytope} of $p$.}
For a Laurent polynomial system $P = (p_1,\dots,p_m)$ in $\boldx = x_1,\dots,x_n$, 
we define $\V^*(P) := \{ \boldx \in (\C^*)^n \mid P(\boldx) = \boldzero \}$,
%which is the common zero set of $P$ in $(\C^*)^n$.
%Here, the dimension $n$ (of the ambient space) is generally clear from context 
%and hence does not directly appear in the notation $\V^*(P)$.
%Though treated merely as a set, 
%$\V^*(P)$ has rich internal structures.
%(e.g. the structure of a quasi-projective algebraic set
%\cite{hartshorne_algebraic_1977}).
%In particular, 
and it consists of
\emph{components} each with a well defined dimension.
Of special interest in our discussion are the components of
zero dimension which are the isolated points\footnote{%
    Here, a point $\boldx \in \V^*(P)$ is said to be \emph{isolated} 
    (a.k.a. geometrically isolated) if there is an open set in $(\C^*)^n$
    that contains $\boldx$ but does not contain any other points in $\V^*(P)$.
} in $\V^*(P)$.
This subset will be denoted by $\V_0^*(P)$.
The proofs of our main results rely on the following important theorems.
%The main tools to be used below are the theorems of
%D. Bernshtein and A. Kushnirenko:
%which bridge together the 
%volume, mixed volume, and Laurent polynomial systems:

\begin{theorem}[\name{Kushnirenko} \cite{kushnirenko_newton_1976}]
    \label{thm:kushnirenko}
    For a Laurent polynomial system $P = (p_1,\dots,p_n)$ in 
    $\boldx = (x_1,\dots,x_n)$ with identical support,
    $S = \supp(p_i)$ for $i=1,\dots,n$, we have
    %if the supports of $p_1,\dots,p_n$ are identical,
    %i.e., $\supp(p_1) = \cdots = \supp(p_n) =: S$,
    $|\V_0^*(P)| \le n! \vvol_n (\conv(S))$.
\end{theorem}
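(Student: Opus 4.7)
The plan is to prove Kushnirenko's bound by a toric deformation argument, counting the limits of solution paths indexed by a regular triangulation of $\conv(S)$. By upper semi-continuity of the number of isolated solutions in $(\C^*)^n$ as a function of the coefficient vector, it suffices to establish the bound for generic coefficients $\{c_{i,\bolda} \mid i=1,\dots,n,\ \bolda \in S\}$. I would then introduce a generic lifting $\omega : S \to \Q$ and form the one-parameter family
$$
p_{i,t}(\boldx) \;=\; \sum_{\bolda \in S} c_{i,\bolda} \, t^{\omega(\bolda)} \, \boldx^{\bolda}, \qquad i = 1, \dots, n,
$$
which specializes to $P$ at $t=1$. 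Genericity of $\omega$ ensures that the lower hull of the lifted configuration $\{(\bolda,\omega(\bolda)) : \bolda \in S\} \subset \R^{n+1}$ projects to a regular triangulation $T = \{\sigma_1,\dots,\sigma_k\}$ of $\conv(S)$ into $n$-simplices, so that $\vvol_n(\conv(S)) = \sum_j \vvol_n(\sigma_j)$.

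The heart of the argument is a Puiseux series analysis of $\V_0^*(P_t)$ as $t \to 0^+$. Every branch of solutions admits an expansion $\boldx(t) = \boldy \, t^{\boldbeta} + \text{(higher order)}$ with $\boldy \in (\C^*)^n$ and $\boldbeta \in \Q^n$. Matching leading exponents in each equation $p_{i,t}(\boldx(t)) = 0$ forces $(\boldbeta,1)$ to be an inner normal to a facet of the lower hull projecting to some simplex $\sigma_j \in T$, and forces $\boldy$ to be a solution in $(\C^*)^n$ of the \emph{start system} obtained by restricting each $p_i$ to the vertices of $\sigma_j$. A unimodular monomial change of coordinates transforms this square system (supported on the $n+1$ vertices of a common simplex $\sigma_j$) into an equivalent binomial system, whose number of nondegenerate solutions in $(\C^*)^n$ is the lattice normalized volume $n! \vvol_n(\sigma_j)$.

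Summing over all cells of $T$ produces at most $\sum_j n! \vvol_n(\sigma_j) = n! \vvol_n(\conv(S))$ continuous solution paths for the homotopy $P_t$, and for generic coefficients each isolated point of $\V^*(P)=\V^*(P_1)$ is reached by the endpoint of at least one such path; extending by upper semi-continuity yields the claimed bound for all coefficient specializations. The main obstacle will be making the Puiseux analysis fully rigorous: one must show that the only admissible leading exponents $\boldbeta$ correspond to lower facets of the lifted configuration, that for generic coefficients each binomial start system has exactly $n!\vvol_n(\sigma_j)$ simple solutions with none of them lying on a coordinate hyperplane, and that no solution path escapes to infinity or to the coordinate arrangement on the interval $t \in (0,1]$. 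Each of these follows from standard genericity arguments on $\omega$ and on the $c_{i,\bolda}$, but they must be assembled with care to preserve the tight count promised by the theorem.
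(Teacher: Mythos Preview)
The paper does not prove this theorem; it is quoted in \S\ref{sec:notations} as a classical result of Kushnirenko \cite{kushnirenko_newton_1976} and then used as a black box in the proofs of Theorems~\ref{thm:mainthma} and~\ref{thm:mainthmb}. There is therefore no proof in the paper to compare your proposal against.

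For what it is worth, your outline is a correct route to the bound: it is the unmixed specialization of the Huber--Sturmfels polyhedral homotopy \cite{huber_polyhedral_1995}, which the paper itself reviews (without proof) in \S\ref{sec:homotopy}. The obstacles you flag at the end are genuine but standard; one organisational tip is to run the continuation backward --- start from each simple isolated zero of $P = P_1$ (simplicity holds for generic $c_{i,\bolda}$), extend it to a branch over $t \in (0,1]$, and classify its limiting behaviour as $t \to 0^+$ via the Puiseux exponent --- so that the inequality $|\V_0^*(P)| \le \sum_j n!\,\vvol_n(\sigma_j) = n!\,\vvol_n(\conv(S))$ becomes a direct count of branches rather than a surjectivity claim about forward path-tracking.
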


%This result is generalized to cases 
%where the supports may not be the same:

\begin{theorem}[\name{Bernshtein}'s First Theorem \cite{bernshtein_number_1975}]
    \label{thm:bernshtein-a}
    For a Laurent polynomial system $P = (p_1,\dots,p_n)$ 
    in $\boldx = (x_1,\dots,x_n)$ (with potentially different supports),
    $|\V_0^*(P)| \le \mvol (\conv(\supp(p_1)),\dots,\conv(\supp(p_n)))$.
\end{theorem}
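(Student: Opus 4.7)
The plan is to prove the bound in two stages: first, reduce to the case of a system with generic coefficients of the prescribed supports; second, count the isolated torus zeros of such a generic system exactly via a polyhedral deformation argument, and identify the count with $\mvol(\conv(S_1),\dots,\conv(S_n))$, writing $S_i := \supp(p_i)$.

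For the reduction, I would fix $S_1,\dots,S_n$ and regard the coefficient data of $P$ as a point $\boldc$ in the affine space $\C^{|S_1|+\cdots+|S_n|}$. A standard upper-semicontinuity argument shows that the map $\boldc \mapsto |\V_0^*(P)|$ is upper semicontinuous when zeros are counted without multiplicity: each isolated torus zero $\boldx_0$ of $P$ persists as at least one isolated torus zero of any sufficiently close system $P_\epsilon$ with the same supports, because the local Brouwer degree of $P$ at $\boldx_0$ is positive, is preserved under small perturbation, and decomposes as a sum of positive local degrees over the nearby zeros of $P_\epsilon$. Hence $|\V_0^*(P)| \le |\V_0^*(P_\epsilon)|$ for generic nearby $\boldc_\epsilon$, reducing the proof to the case where the coefficients are generic with respect to the $S_i$.

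For the generic case, I would choose generic liftings $\omega_i : S_i \to \Q$ and form the one-parameter family
\[
    p_i^{(t)}(\boldx) \;=\; \sum_{\bolda \in S_i} c^{(i)}_{\bolda}\, t^{\omega_i(\bolda)}\, \boldx^{\bolda}, \qquad i = 1,\dots,n,
\]
so that $P^{(1)} = P$ and the number of isolated torus zeros of $P^{(t)}$ is constant on a Zariski-open subset of $\C^*$. Tracing solution branches as $t \to 0^+$, every branch admits a Puiseux expansion with leading behavior $\boldx(t) \sim t^{\boldy}$ for some $\boldy \in \Q^n$. The direction $\boldy$ singles out, via the $\omega_i$, a unique cell $C_1 + \cdots + C_n$ (with $C_i \subset S_i$) of the lower hull of the Minkowski sum of the lifted supports; these cells form a coherent mixed subdivision of $\conv(S_1)+\cdots+\conv(S_n)$. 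After the substitution $\boldx = t^{\boldy}\boldx'$ and clearing common powers of $t$, the limit system $\hat{P}_C$ has supports $C_1,\dots,C_n$, and its isolated torus zeros correspond bijectively to the branches with leading direction $\boldy$. For $\hat{P}_C$ to admit any isolated torus zero, a dimension count forces each $C_i$ to be one-dimensional, so that $C_1 + \cdots + C_n$ is a \emph{mixed cell}; in that case a monomial change of variables reduces $\hat{P}_C$ to $n$ binomials in $n$ variables whose isolated torus zeros number exactly $n!\,\vvol_n(C_1+\cdots+C_n) = \mvol(\conv(C_1),\dots,\conv(C_n))$. Summing over mixed cells and invoking the classical identity that the total normalized volume of the mixed cells in a fine mixed subdivision equals $\mvol(\conv(S_1),\dots,\conv(S_n))$ then yields the bound.

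The main obstacle is ensuring that the end-game is complete: one must show that every isolated torus zero of $P^{(1)}$ lies on a solution branch that can be continued through a punctured disk around $t=0$ without ``escaping'' the torus other than in the structurally predicted Puiseux directions. This is handled via a properness argument for the parametric solution variety over $\C \setminus B$ for some finite $B \subset \C$, combined with an application of the Newton--Puiseux theorem to each irreducible component of the curve of solutions over a small neighborhood of $0$; the technical care here is the crux of the proof. Once this is in place, the rest is combinatorial bookkeeping over the mixed subdivision, with Kushnirenko's theorem (Theorem~\ref{thm:kushnirenko}) applied cell-by-cell to control any degenerate contributions.
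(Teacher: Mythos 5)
The paper does not prove this statement at all: Bernshtein's First Theorem is quoted as a black box with a citation to \cite{bernshtein_number_1975}, so there is no internal proof to compare against. Your proposal is the standard polyhedral proof in the style of Huber and Sturmfels \cite{huber_polyhedral_1995} (which is also the mechanism the paper later uses for its homotopy discussion in \S 7), and its overall architecture is sound: the semicontinuity reduction to generic coefficients via positive local degree is correct, and the deformation/mixed-subdivision count is the right way to identify the generic root count with the mixed volume. Two points deserve care if you were to write this out. First, the claim that ``a dimension count forces each $C_i$ to be one-dimensional'' is slightly misstated: the correct bookkeeping is that for a generic lifting the induced subdivision is fine, so each cell satisfies $\sum_i(|C_i|-1)=n$; if some $|C_i|=1$ the corresponding limit equation is a single monomial with no torus zeros, and otherwise all $|C_i|=2$, giving a binomial system counted by the Smith normal form. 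Second, as you yourself flag, the crux is the end-game: one must show that \emph{every} isolated torus zero at a generic parameter value lies on a branch of the solution curve admitting a Puiseux expansion with leading exponent vector in $(\C^*)^n$-coefficients, and that distinct isolated zeros give distinct branches; this requires the properness/Newton--Puiseux argument you mention, and it is genuinely the hard part rather than routine. As a proposal the route is legitimate and complete in outline, but the proof is not finished until that step is carried out.
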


In \cite{canny_optimal_1991}, this upper bound was nicknamed the \emph{BKK bound} 
after the works of Bernshtein \cite{bernshtein_number_1975,bernshtein_newton_1976}, 
Kushnirenko \cite{kushnirenko_newton_1976}, and Khovanskii \cite{khovanskii_newton_1978}.
The condition under which the BKK bound is exact (counting multiplicity)
is stated in terms of ``initial systems'':
For a Laurent polynomial $p = \sum_{\bolda \in S} c_{\bolda} \boldx^{\bolda}$ 
in $\boldx = (x_1,\dots,x_n)$ and a nonzero vector $\boldalpha \in \R^n$, 
$\init_{\boldalpha} (p) := \sum_{\bolda \in (S)_{\boldalpha}} c_{\bolda} \boldx^{\bolda}$.
For a Laurent polynomial system $P = (p_1,\dots,p_m)$,
the \emph{initial system} of $P$ with respect to $\boldalpha$ is 
$\init_{\boldalpha} (P) := 
( \init_{\boldalpha}(p_1), \dots, \init_{\boldalpha}(p_m) )$.

\begin{theorem}[\name{Bernshtein}'s Second Theorem \cite{bernshtein_number_1975}]
    \label{thm:bernshtein-b}
    For a Laurent polynomial system $P = (p_1,\dots,p_n)$ in 
    $\boldx = (x_1,\dots,x_n)$ with supports $S_1=\supp(p_1),\dots,S_n=\supp(p_n)$, 
    if for all $\boldalpha \in \R^n \setminus \{\boldzero\}$, 
    $\init_{\boldalpha}(P)$ has no zeros in $(\C^*)^n$,
    then all zeros of $P$ in $(\C^*)^n$ are isolated,
    and, counting multiplicity, the total number of zeros is exactly
    $\mvol (\conv(S_1),\dots,\conv(S_n))$.
\end{theorem}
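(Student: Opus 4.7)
The plan is to pair the upper bound from Bernshtein's First Theorem (Theorem~\ref{thm:bernshtein-a}) with a matching lower bound produced by a homotopy continuation argument. Set $M := \mvol(\conv(\supp(p_1)),\dots,\conv(\supp(p_n)))$. Since $|\V_0^*(P)| \le M$ already holds, the remaining tasks are: (i) show that $\V^*(P)$ has no positive-dimensional components in $(\C^*)^n$, so that every torus zero is isolated; and (ii) show that the total count with multiplicity is at least $M$.

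For (ii), I would introduce an auxiliary system $P^* = (p_1^*,\ldots,p_n^*)$ with $\supp(p_i^*) = \supp(p_i)$ and generic coefficients. Because the BKK bound is attained on a Zariski-dense open subset of coefficient space, $P^*$ may be chosen so that it has exactly $M$ simple torus zeros. Now form the complex-parameter homotopy
\begin{equation*}
    H(\boldx, t) \;:=\; (1 - t)\,\gamma\, P^*(\boldx) \,+\, t\, P(\boldx), \qquad t \in \C,
\end{equation*}
with a generic $\gamma \in \C^*$ (the standard ``gamma trick'' to avoid singularities along the real segment). The zero set $\{H = \boldzero\} \subset (\C^*)^n \times \C$ projects to $\C$ as an algebraic curve meeting $\{t = 0\}$ in $M$ simple points. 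Along any path from $0$ to $1$ in $\C$ avoiding the finitely many critical values of this projection, the curve consists of $M$ holomorphic branches. If each branch admits a limit inside $(\C^*)^n$ as $t \to 1$, then those limits, counted with path-merging multiplicity, furnish at least $M$ isolated torus zeros of $P$.

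The main obstacle --- the heart of the argument --- is ruling out \emph{path divergence} as $t \to 1$. I would analyze this via Puiseux expansion. If some branch $\boldx(t)$ escaped, then writing $s = 1 - t$ and expanding componentwise as $x_k(t) = c_k\, s^{\alpha_k / N} + (\text{higher order})$ with $\boldc \in (\C^*)^n$ and $\boldalpha = (\alpha_1,\ldots,\alpha_n) \in \Z^n \setminus \{\boldzero\}$ (some $\alpha_k \neq 0$ recording the escaping coordinate), the equation $H_i(\boldx(t), t) = 0$ becomes
\begin{equation*}
    t\, p_i(\boldx(t)) \,+\, (1-t)\,\gamma\, p_i^*(\boldx(t)) \;=\; 0.
\end{equation*}
The first summand has leading term $\init_{\boldalpha}(p_i)(\boldc)\, s^{h_i}$ with $h_i := h_{\boldalpha}(\supp(p_i))/N$, while the second summand carries an extra factor of $s$ and so contributes only from order $s^{1 + h_i}$. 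Equating the lowest-order coefficient to zero therefore forces $\init_{\boldalpha}(p_i)(\boldc) = 0$ for every $i$, placing $\boldc \in \V^*(\init_{\boldalpha}(P))$ and contradicting the hypothesis. The same Puiseux analysis applied to any irreducible curve inside a hypothetical positive-dimensional component of $\V^*(P)$ (parametrized to approach the torus boundary) produces a torus zero of some $\init_{\boldalpha}(P)$, settling claim (i). Once path escape is excluded, a standard specialization argument preserves the total length of the zero-dimensional fibre across the homotopy, yielding $|\V_0^*(P)| \ge M$ with multiplicity and completing the proof.
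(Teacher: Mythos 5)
The paper does not prove this statement: it is quoted as a classical theorem of Bernshtein and used as a black box (together with Theorem~\ref{thm:bernshtein-a} and Remark~\ref{rmk:generic}) in the proofs of Theorems~\ref{thm:mainthma} and~\ref{thm:mainthmb}, so there is no in-paper argument to compare against. Your proposal is, in outline, the standard proof --- essentially Bernshtein's original degeneration argument recast in the homotopy-continuation language later systematized by Huber and Sturmfels --- and I see no step that fails. The Puiseux analysis of a hypothetically escaping branch is exactly the right mechanism: because the $P^*$-part of $H_i$ carries an extra factor of $s$, the lowest-order coefficient of $H_i(\boldx(t),t)$ is precisely $\init_{\boldalpha}(p_i)(\boldc)$ with $\boldc \in (\C^*)^n$ and $\boldalpha \ne \boldzero$, and the hypothesis kills it. Two points deserve more care in a full write-up. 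First, Theorem~\ref{thm:bernshtein-a} as stated bounds only the \emph{number} of isolated torus zeros, not the sum of their multiplicities, so pairing it with your multiplicity-counting lower bound does not by itself give equality with multiplicity; you should either invoke the multiplicity-counting form of the first theorem or note that your homotopy already supplies the matching upper bound --- once escape and positive-dimensional components are excluded, conservation of the local intersection number forces each isolated zero of multiplicity $\mu$ to absorb exactly $\mu$ of the $M$ branches, so the total is exactly $M$. Second, the existence of the nonzero exponent vector $\boldalpha$ for an escaping branch (and for a curve inside a hypothetical positive-dimensional component, which cannot be compact and hence must reach the boundary) rests on compactifying the torus, e.g.\ inside a projective toric variety, so that the closure of the branch meets the boundary divisor and admits a Puiseux parametrization there; this is routine but should be stated.
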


\begin{remark}
    \label{rmk:generic}
%    To make explicit the role of the choice of the coefficients in a system $P$ 
%    of $n$ Laurent polynomials in $n$ unknowns,
%    we shall write $P = P(\boldc,\boldx)$ where $\boldc$ collects all the 
%    coefficients appeared in $P(\boldx)$.
	An important fact is that the BKK bound is always attainable.
	That is, fixing the supports of $P$, 
	there is always some choice of the coefficients for which all points 
	in $\V_0^*(P)$ are simple (of multiplicity one), 
	and $|\V_0^*(P)| = \mvol (\conv(\supp(p_1)),\dots,\conv(\supp(p_n)))$.
    In this case, $P$ is said to be in \emph{general position}
    (with respect to the supports).
	Indeed, such choices form a nonempty \emph{Zariski open set} in the coefficient space
	\cite{canny_optimal_1991,gelfand_discriminants_1994,huber_solving_1996,li_cheaters_1989,morgan_coefficient-parameter_1989,mumford_algebraic_1995}.
%	Moreover, by an standard application of Generalized Sard's Theorem \cite{chow_homotopy_1979},
%	for almost all choice of coefficients $\boldc$, all isolated solutions of 
%	$P(\boldx) = P(\boldx,\boldc)$ are simple (i.e., of multiplicity one).
%This can be understood as a consequence of the \emph{specialization principle}
%\cite{mumford_algebraic_1995} in algebraic geometry.
%    Then it can be shown that among the space of all possible choices of the 
%    coefficients, there is a nonempty Zariski open set $U$ such that for any 
%    $\boldc \in U$ the BKK bound provided by Theorem \ref{thm:bernshtein-a} 
%    is exact for $P(\boldc,\boldx)$.
%    In particular, such a nonempty Zariski open set $U$ must be open and dense
%    in the complex topology and of full Lebesgue measure.
%    Consequently, if the coefficients are chosen at random, 
%    then with probability one, the BKK bound is exact for $P(\boldc,\boldx)$.
%    In this case, the Laurent polynomial system $P(\boldc,\boldx)$ is said
%    to be in \emph{general position}.
%    It is with this probablistic interpretation that the BKK bound is commonly
%    considered to be \emph{generically exact}.
\end{remark}

%===============================================================================
\section{Proofs of the main results} \label{sec:proofs}

\revision{
    The proofs of the main theorems all rely on the theory of BKK bound.
    Given a system of $n$ Laurent polynomial systems
    $P = (p_1,\dots,p_n)$ in $n$ variables with supports $S_1,\dots,S_n$
    and Newton polytopes $Q_i = \conv(S_i)$.
    If $P$ is in general position (Remark~\ref{rmk:generic}) 
    the $\C^*$-root count is exactly the mixed volume $\mvol(Q_1,\dots,Q_n)$.
    For a nonsingular square matrix $A$,
    the systems $A \cdot P$ and $P$ have the exact same set of $\C^*$-roots.
    Moreover, if $A$ is chosen generically,
    $\supp(A \cdot P)$ is exactly the union of $S_1,\dots,S_n$.
    Therefore, we have 
    \[
        \mvol(Q_1,\dots,Q_n) = 
        |\V_0^*(P)| =
        |\V_0^*(A \cdot P)| \le
        n! \vvol(\conv(S_1 \cup \dots \cup S_n)).
    \]
    In general, we may not have the equality because $A \cdot P$
    itself may not be in general position as a member of the much larger family
    of Laurent polynomial systems with Newton polytopes $\conv(S_1 \cup \dots \cup S_n)$.
    We establish the equality by showing under certain conditions,
    if $P$ is general position, then $A \cdot P$ is also in general position.
}
% This section provides the proofs to the main results.
% All these proofs are done via the connection between mixed volume and root count
% for systems of Laurent polynomial equations established by three theorems 
% listed above.
% of 
% \name{Bernshtein} \cite{bernshtein_number_1975} and 
% \name{Kushnirenko} \cite{kushnirenko_newton_1976}.

\begin{wrapfigure}[12]{r}{0.30\textwidth}
    \centering
    \includegraphics[width=0.30\textwidth]{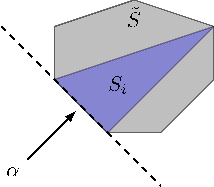}
    \caption{
        A face of $\conv(\tilde{S})$ intersecting $S_i$
        for certain $i$.
    }
    \label{fig:hyperplane}
\end{wrapfigure}
The proofs make repeated use of a simple geometric observation illustrated in 
Fig. \ref{fig:hyperplane}.
We state it as a lemma for later reference:
%A supporting hyperplane of $\conv(\tilde{S})$ must also be a supporting hyperplane
%for each $\conv(S_i)$ that it intersects.
%For easy reference, we state it as a separated lemma:

\begin{lemma}
    \label{lem:face}
    For $n$ nonempty sets $S_1,\dots,S_n \subset \Q^n$, 
    let $F$ be a proper face of $\conv(S_1 \cup \cdots \cup S_n)$
    and $\boldalpha$ be its inner normal.
    For each $i$ such that $F \cap S_i \ne \varnothing$,
    $F \cap S_i = (S_i)_{\boldalpha}$.
\end{lemma}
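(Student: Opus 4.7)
The plan is to unfold the definitions of ``inner normal'' and ``face'' and to exploit the fact that linear functionals on polytopes attain their extrema at vertices. Let $\tilde{S} = S_1 \cup \cdots \cup S_n$ and $\tilde{P} = \conv(\tilde{S})$. By hypothesis $\boldalpha$ is the inner normal of $F$, so $F = (\tilde{P})_{\boldalpha}$ and every point of $F$ attains the common value $h_{\boldalpha}(\tilde{P})$ under the pairing with $\boldalpha$.

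First I would argue that $h_{\boldalpha}(\tilde{P}) = h_{\boldalpha}(\tilde{S})$: the linear functional $\inner{\cdot}{\boldalpha}$ attains its minimum on the compact convex polytope $\tilde{P}$ at some vertex, and every vertex of $\tilde{P}$ lies in $\tilde{S}$. Consequently $h_{\boldalpha}(\tilde{P}) = \min_i h_{\boldalpha}(S_i)$, and for every $i$ we have $h_{\boldalpha}(S_i) \ge h_{\boldalpha}(\tilde{P})$.

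Next, fix an index $i$ with $F \cap S_i \ne \varnothing$ and pick $\boldx \in F \cap S_i$. Since $\boldx \in F$ forces $\inner{\boldx}{\boldalpha} = h_{\boldalpha}(\tilde{P})$, while $\boldx \in S_i$ forces $\inner{\boldx}{\boldalpha} \ge h_{\boldalpha}(S_i) \ge h_{\boldalpha}(\tilde{P})$, equality must hold throughout; in particular $h_{\boldalpha}(S_i) = h_{\boldalpha}(\tilde{P})$. From this the two sets coincide: a point $\boldy \in S_i$ lies in $F \cap S_i$ iff $\inner{\boldy}{\boldalpha} = h_{\boldalpha}(\tilde{P}) = h_{\boldalpha}(S_i)$, which is exactly the condition defining $(S_i)_{\boldalpha}$.

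I do not anticipate any serious obstacle; the content of the lemma is essentially a bookkeeping exercise establishing that once one $S_i$ meets the face $F$, the ``global'' support height $h_{\boldalpha}(\tilde{P})$ and the ``local'' height $h_{\boldalpha}(S_i)$ agree, so the $\boldalpha$-minimizing faces agree on $S_i$ as well. The only subtlety worth flagging explicitly is that the inequality $h_{\boldalpha}(S_i) \ge h_{\boldalpha}(\tilde{P})$ can be strict for those $i$ with $F \cap S_i = \varnothing$, which is precisely why the conclusion is conditional on nonempty intersection.
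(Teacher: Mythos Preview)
Your proof is correct and follows essentially the same approach as the paper's: both arguments fix an $i$ with $F \cap S_i \ne \varnothing$, observe that any point of $F \cap S_i$ realizes the global minimum $h_{\boldalpha}(\tilde{P})$ while any point of $S_i \setminus F$ lies strictly above it, and conclude that $F \cap S_i$ is exactly the $\boldalpha$-minimizing subset $(S_i)_{\boldalpha}$. Your write-up is slightly more explicit in first recording $h_{\boldalpha}(S_i) = h_{\boldalpha}(\tilde{P})$ as an intermediate equality, but the underlying reasoning is the same.
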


\begin{proof}
    Fix an $i \in \{1,\dots,n\}$ such that $F \cap S_i \ne \varnothing$.
    If we let $h = h_{\boldalpha} (\conv(S_1 \cup \cdots \cup S_n))$ then
    for any $\boldx \in F \cap S_i$, $\inner{\boldx}{\boldalpha} = h$
    while for any $\boldy \in S_i \setminus F \subseteq 
    \conv(S_1 \cup \cdots \cup S_n) \setminus F$
    we have $\inner{\boldy}{\boldalpha} > h$.
    Therefore $F \cap S_i = (S_i)_{\boldalpha}$.
    \qed
\end{proof}

With this lemma, we restate and prove the two main theorems listed earlier.

%===============================================================================
\mainthma*

\begin{proof}
    We shall first prove the cases where $S_i \subset \Z^n$
    for $i=1,\dots,n$.
    Let $P = (p_1,\dots,p_n)$ be a Laurent polynomial system in 
    $\boldx = (x_1,\dots,x_n)$ such that $\supp(p_i) = S_i$ for each $i$.
    That is, $p_i(\boldx) = \sum_{\bolda \in S_i} c_{i,\bolda} \boldx^{\bolda}$.
    By Theorem \ref{thm:bernshtein-a}, Theorem \ref{thm:bernshtein-b}, and Remark \ref{rmk:generic},
    there exists a choice of the coefficients 
    $\{c_{i,\bolda} \mid \bolda \in S_i, i=1,\dots,n\}$
    such that 
    \revision{$P$ is in general position, i.e.,}
    all points in $\V_0^*(P)$ are simple and
%    are chosen so that $P$ is in general position.
%    Then by \name{Bernshtein}'s First Theorem (\cref{thm:bernshtein-a}),
    \begin{equation}
        \label{equ:P-mv}
        |\V_0^*(P)| = \mvol (\, \conv (S_1), \dots, \conv (S_n) \,).
    \end{equation}
    Now, consider a \emph{randomization} \cite{sommese_numerical_2005}
    $A \cdot P$ of $P$ induced by a nonsingular $n \times n$ complex matrix $A = [a_{ij}]$:
    \begin{equation}
        \label{equ:AP}
        A \cdot P := 
        \begin{bmatrix}
            a_{11} & \cdots & a_{1n} \\
            \vdots & \ddots & \vdots \\
            a_{n1} & \cdots & a_{nn}
        \end{bmatrix}
        \begin{bmatrix}
            p_1 \\ \vdots \\ p_n
        \end{bmatrix} =
        \begin{bmatrix}
            a_{11} p_1 + \cdots + a_{1n} p_n \\
            \vdots \\
            a_{n1} p_1 + \cdots + a_{nn} p_n
        \end{bmatrix}.
    \end{equation}
    Since $A$ is nonsingular,  
    $(A \cdot P)(\boldx) = A \cdot (P(\boldx)) = \boldzero$
    if and only if $P(\boldx) = \boldzero$.
    Therefore, $A \cdot P$ and $P$ have the same zeros.
    In particular, $\V_0^*(A \cdot P) = \V_0^*(P)$, and all of its points are simple.
    
    With the coefficients of $P$ already fixed, we assume entries 
    of $A$ are chosen so that there are no cancellations of terms in $A \cdot P$, 
    then it is easy to verify that the supports of the Laurent polynomials 
    in $A \cdot P$ are identical which is
    \[
        \supp (a_{i1} p_1 + \cdots + a_{in} p_n) = S_1 \cup \cdots \cup S_n =: \tilde{S}
    \]
    for each $i=1,\dots,n$.
    By \name{Kushnirenko}'s Theorem (Theorem \ref{thm:kushnirenko}), 
    \begin{equation}
        \label{equ:AP-mv}
        |\V_0^*(A \cdot P)| \le
        n! \vvol_n (\conv(\tilde{S})).
    \end{equation}
    Theorem \ref{thm:bernshtein-b} and Remark \ref{rmk:generic} states that 
    the equality holds as long as for any nonzero vector $\boldalpha \in \R^n$, 
    the initial system 
    $\init_{\boldalpha} (A \cdot P)$ has no zero in $(\C^*)^n$.
    Let $F := (\conv(\tilde{S}))_{\boldalpha}$.
    If $F$ is a vertex, i.e., $F = \{\bolda\}$ for some $\bolda \in \tilde{S}$,
    then each component of $\init_{\boldalpha} (A \cdot P)$ has only one term: 
    the term involving $\boldx^{\bolda}$.
    Therefore $\init_{\boldalpha} (A \cdot P)$ has no zero in $(\C^*)^n$.
    Otherwise, $F$ is positive dimensional and hence, by assumption, 
    $F$ intersects each $S_i$.
    By Lemma \ref{lem:face}, $F \cap S_i = (S_i)_{\boldalpha}$ for each 
    $i=1,\dots,n$. Therefore,
    \begin{equation}
        \init_{\boldalpha} (A \cdot P) = 
        \begin{bmatrix}
        \displaystyle
        \sum_{i=1}^n a_{1i} 
%        \left(
        \sum_{\bolda \in F \cap S_i} c_{i,\bolda} \boldx^{\bolda}
%        \right) 
        \\
        \vdots \\
        \displaystyle
        \sum_{i=1}^n a_{ni} 
%        \left(
        \sum_{\bolda \in F \cap S_i} c_{i,\bolda} \boldx^{\bolda}
%        \right) 
        \\
        \end{bmatrix} =
        A \cdot 
        \begin{bmatrix}
        \displaystyle
        \sum_{\bolda \in F \cap S_1} c_{1,\bolda} \boldx^{\bolda} \\
        \vdots \\
        \displaystyle
        \sum_{\bolda \in F \cap S_n} c_{n,\bolda} \boldx^{\bolda} \\
        \end{bmatrix}
        =
        A \cdot %\init_{\boldalpha}(P).
        \begin{bmatrix}
        \init_{\boldalpha} (p_1) \\
        \vdots \\
        \init_{\boldalpha} (p_n)
        \end{bmatrix}.
    \end{equation}
    That is, $\init_{\boldalpha} (A \cdot P) = A \cdot \init_{\boldalpha}(P)$.
    Recall that $A$ is nonsingular, so $\init_{\boldalpha}(A \cdot P)=\boldzero$ 
    if and only if $\init_{\boldalpha}(P) = \boldzero$.
    But $P$ is in general position, so 
    \[
        \V^*(\init_{\boldalpha}(A \cdot P)) = 
        \V^*(\init_{\boldalpha}(P)) = \varnothing.
    \]
    Therefore for all nonzero vector $\boldalpha \in \R^n$, 
    $\init_{\boldalpha} (A \cdot P)$ has no zero in $(\C^*)^n$.
    Recall that all points in $\V_0^*(A \cdot P)$ are simple,
    so by \name{Bernshtein}'s Second Theorem (Theorem \ref{thm:bernshtein-b}),
    $|\V_0^*(A \cdot P)| = n! \vvol(\conv(\tilde{S}))$, and consequently
    \begin{equation*}
        \mvol(\conv S_1, \dots, \conv S_n) = 
        |\V_0^*(P)| = 
        |\V_0^*(A \cdot P)| = 
        n! \vvol(\conv(\tilde{S})).
    \end{equation*}
    Since both sides of this equality are homogeneous of degree $n$ in a uniform 
    positive scaling, this result directly extend to cases with $S_i \subset \Q^n$.
    \qed
\end{proof}

%===============================================================================
\mainthmb*

\begin{proof}
%    We shall first prove the statement for cases where $S_i \subset \Z^n$
%    for $i=1n\dots,n$.
    
    We shall reuse the previous constructions:
    Let the Laurent polynomial system $P$, nonsingular matrix $A$, and the
    randomization $A \cdot P$ be those defined in the previous proof.
    Then $\supp (A \cdot P) = (\tilde{S},\dots,\tilde{S})$ and
    \[
        \mvol(\conv(S_1),\dots,\conv(S_n)) 
        = |\V^*_0(P)|
        = |\V^*_0(A \cdot P)| 
        \le n! \vvol (\conv(\tilde{S})).
    \]
    The goal is still to establish the equality by showing for any nonzero
    vector $\boldalpha \in \R^n$, $\init_{\boldalpha}(A \cdot P)$ has no zero
    in $(\C^*)^n$ under the above assumptions.
    Let $F = (\conv(\tilde{S}))_{\boldalpha}$.
    If $F$ is a vertex or satisfies condition (A), the proof for
    Theorem \ref{thm:mainthma} has already shown that
%    and we shall show that any of
%    the above condition would imply 
    $\V^*(\init_{\boldalpha}(A \cdot P)) = \varnothing$.
%    We shall show that under the above assumptions, the initial system 
%    $\init_{\boldalpha}(A \cdot P)$ has no zeros in $(\C^*)^n$.
%    Cases (A) and (B) have already been established in the proof of
%    Theorem \ref{thm:mainthma}.
%    That is, if $F$ is a vertex or $F$ intersect all $S_i$'s, then
%    $\V^*(\init_{\boldalpha} (A \cdot P)) = \varnothing$.
    
    To study the remaining possibilities, 
    we may assume $F$ is positive dimensional and $F \cap S_j = \varnothing$
    for some $j$.
    Let $I := \{ i \in \{1,\dots,n\} \mid F \cap S_i \ne \varnothing \}
    = \{i_1,\dots,i_m\}$ with $m = |I| < n$.
    Then by Lemma \ref{lem:face}, $F \cap S_i = (S_i)_{\boldalpha}$ for each
    $i \in I$. Hence
    \begin{align*}
        \init_{\boldalpha}(A \cdot P) =
        \begin{bmatrix}
            \displaystyle
            \sum_{i \in I} a_{1i} 
            \sum_{\bolda \in F \cap S_i} c_{i,\bolda} \boldx^{\bolda} 
            \\
            \vdots \\
            \displaystyle
            \sum_{i \in I} a_{ni} 
            \sum_{\bolda \in F \cap S_i} c_{i,\bolda} \boldx^{\bolda} 
        \end{bmatrix} 
        =
        \begin{bmatrix}
            \displaystyle
            \sum_{i \in I} a_{1i} \init_{\boldalpha}(p_i) \\
            \vdots \\
            \displaystyle
            \sum_{i \in I} a_{ni} \init_{\boldalpha}(p_i)
        \end{bmatrix}
        =
        A_I \cdot \init_{\boldalpha}(P_I)
    \end{align*}
    where $A_I$ is the matrix whose columns consists of the $i$-th columns of 
    $A$ for $i \in I$ and $P_I$ is the Laurent polynomial system 
    (as a column vector) whose components are $p_i$ for $i \in I$.
    Since $A$ is assumed to be nonsingular, $A_I$ has rank $|I|$.
    Therefore $\boldzero = \init_{\boldalpha}(A \cdot P) = A_I \cdot \init_{\boldalpha}(P_I)$
    if and only if $\init_{\boldalpha}(P_I) = \boldzero$.
    \smallskip
    
    \noindent Case (B) \hspace{1ex}
    Assume $F$ satisfies condition (B), then there is at least one $i_1 \in I$ 
    for which $F \cap S_{i_1} = (S_{i_1})_{\boldalpha}$ is a singleton, i.e., 
    $F \cap S_{i_1} = \{\bolda\}$ for some $\bolda \in S_{i_1}$.
    Therefore, $\init_{\boldalpha}(p_{i_1}) = c_{i_1,\bolda} \boldx^{\bolda}$
    where $c_{i_1,\bolda} \ne 0$, and hence it has no zero in $(\C^*)^n$.
    Consequently,
    \[
        \V^*(\init_{\boldalpha} (A \cdot P)) = 
        \V^*(A_I \cdot \init_{\boldalpha} (P_I)) =
        \V^*(\init_{\boldalpha} (P_I)) \subseteq
        \V^*(\init_{\boldalpha} (p_{i_1})) = \varnothing.
    \]
    That is, $\V^*(\init_{\boldalpha} (A \cdot P)) = \varnothing$.
    \smallskip
    
    \noindent Case (C) \hspace{1ex}
    Finally, assume $F$ satisfies condition (C).
%    Let $I := \{ i \in \{1,\dots,n\} \mid F \cap S_i \ne \varnothing \}
%    = \{i_1,\dots,i_m\}$ with $m = |I| < n$. In this case,
%    \begin{align*}
%        \init_{\boldalpha} (A \cdot P) =
%        \begin{bmatrix}
%        \sum_{i \in I} a_{1i} \left( 
%        \sum_{\bolda \in F \cap S_i} c_{i,\bolda} \boldx^{\bolda} 
%        \right) \\
%        \vdots \\
%        \sum_{i \in I} a_{ni} \left( 
%        \sum_{\bolda \in F \cap S_i} c_{i,\bolda} \boldx^{\bolda} 
%        \right)
%        \end{bmatrix} 
%        =
%        A_I \cdot
%        \begin{bmatrix}
%        \sum_{\bolda \in F \cap S_{i_1}} c_{i_1,\bolda} \boldx^{\bolda} \\
%        \vdots \\
%        \sum_{\bolda \in F \cap S_{i_m}} c_{i_m,\bolda} \boldx^{\bolda}
%        \end{bmatrix}
%        =
%        A_I \cdot \init_{\boldalpha}(P_I)
%    \end{align*}
%    where $A_I$ is, again, the matrix whose columns are the 
%    $i_1$-th,\ldots,$i_m$-th columns of $A$.
%    Since $A$ is assumed to be nonsingular, $A_I$ has rank $m = |I|$.
%    Therefore $\init_{\boldalpha} (A \cdot P) = \boldzero$
%    if and only if 
%    \begin{equation}
%        \label{equ:init-c}
%        \init_{\boldalpha}(P_I) = 
%        \left\{
%        \begin{aligned}
%        \sum_{\bolda \in F \cap S_{i_1}} c_{i_1,\bolda} \boldx^{\bolda} &= 0 \\
%        &\vdots \\
%        \sum_{\bolda \in F \cap S_{i_m}} c_{i_m,\bolda} \boldx^{\bolda} &= 0.
%        \end{aligned}
%        \right.
%    \end{equation}    
    Then $\supp(\init_{\boldalpha}(P_I)) = (F \cap S_{i_1}, \dots, F \cap S_{i_m})$
    lie in a common coordinate subspace of dimension $m = |I|$.
    Let $j_1,\dots,j_m$ be the indices so that $\bolde_{j_1},\dots,\bolde_{j_m}$
    form a basis for this coordinate subspace.
    Then $\init_{\boldalpha} (P_I)$ only involves $m$ of the $n$ variables
    $x_{j_1},\dots,x_{j_m}$. 
    To emphasize this, we shall write it as $P_F = P_F(x_{j_1},\dots,x_{j_m})$, 
    and it is a square system of $m$ equations in $m$ variables.
    Let $\pi : \R^n \to \R^m$ be the projection into the common coordinate subspace,
    then $\supp(P_F) = (\pi(F \cap S_{i_1}), \dots, \pi(F \cap S_{i_m}))$.
    It is also assumed that the projection $\pi(F)$ is of dimension less than $m$.
    Then the supports of $P_F$ lie in an affine subspace of dimension less than $m$. 
    Consequently
    \begin{equation}
        \label{equ:mv-PF}
        \mvol(\supp(P_F)) = 
        \mvol(\pi(F \cap S_{i_1}), \dots, \pi(F \cap S_{i_m})) = 0.
    \end{equation}
    Notice that the coefficients of $P_F$ is a subset of the coefficients of
    the original system $P = (p_1,\dots,p_n)$.
    So there is a nonempty Zariski open set $C_F$ among the choices of 
    coefficients for $P$ for which $P_F$ is also in general position.
    
    Since $\conv(\tilde{S})$ has finitely many faces and in Zariski topology, 
    any two nonempty open set must intersect, without loss of generality, 
    we may assume the coefficients for $P$ is chosen so that $P_F$ is in 
    general position for all proper positive dimensional faces $F$. 
    Then by \eqref{equ:mv-PF},
    $\V^*(P_F) = \V^*_0(P_F) = \varnothing$.
    Recall that $P_F(x_{j_1},\dots,x_{j_m})$ is simply $\init_{\boldalpha}(P_I)$ 
    but ignoring the $n-m$ variables that do not actually appear.
    So for any $\boldx = (x_1,\dots,x_n) \in \V^*(\init_{\boldalpha}(P_I))$,
    we must have $(x_{j_1},\dots,x_{j_m}) \in \V^*(P_F) = \varnothing$.
    Therefore $\V^*(\init_{\boldalpha}(P_I)) = \varnothing$ which implies
    $\V^*(\init_{\boldalpha}(A \cdot P)) = \V^*(\init_{\boldalpha}(P_I)) = \varnothing$.
%    
%    \[
%        \V^*(\init_{\boldalpha}(P_F)) = 
%        \V^*_0(\init_{\boldalpha}(P_F)) = 
%        \varnothing
%    \]
    \smallskip
    
    We can now conclude that under the assumptions (A),(B), and (C)
    for any nonzero vector $\boldalpha \in \R^n$, $\init_{\boldalpha} (A \cdot P)$ 
    has no zero in $(\C^*)^n$.
    Then by \name{Bernshtein}'s Second Theorem (Theorem \ref{thm:bernshtein-b})
    $|\V_0^*(A \cdot P)| = n! \vvol(\conv(\tilde{S}))$, and consequently
    \begin{equation*}
        \mvol(\conv S_1, \dots, \conv S_n) = 
        |\V_0^*(P)| = 
        |\V_0^*(A \cdot P)| = 
        n! \vvol(\conv(\tilde{S})).
    \end{equation*}
    
    As in the previous case, since both sides of this equality are homogeneous of degree $n$ in a uniform 
    positive scaling, this result directly extend to cases with $S_i \subset \Q^n$.
    \qed
\end{proof}
%\smallskip

\revision{
The above proofs also produced a byproduct that is potentially useful in 
numerical methods for solving system of Laurent polynomials:
}

\begin{proposition}
    \revision{
    Let $P = (p_1,\dots,p_n)$ be a Laurent polynomial system
    in general position 
    that satisfies the conditions in Theorem~\ref{thm:mainthma}
    or Theorem~\ref{thm:mainthmb}.
    For a generic square complex matrix $A$,
    the system $A \cdot P$ is also in general position.
    }
\end{proposition}

\revision{
    This property is particularly important in the
    numerical homotopy continuation methods for solving Laurent polynomial systems
    and is explored in \S~\ref{sec:homotopy}.
}

\begin{remark}[Automatic verification during volume computation] \label{rmk:automatic}
	The conditions for the two theorems proved above involve how faces of 
	$\conv(\tilde{S}) = \conv(S_1 \cup \cdots S_n)$ intersect 
	or fail to intersect each $S_i$.
	These, in principle, can be verified automatically as by-products from the
    process of computing the volume of $\conv(\tilde{S})$ using a 
    \revision{
    certain kind of subdivision algorithm:
    If the polytope is represented as the convex hull of a set of points,}
	a particularly simple procedure for constructing a subdivision for
	the convex polytope $\conv(\tilde{S})$ starts with the enumeration of
	all its facets.
	Then the collection of all the pyramids formed by the facets and a fixed
	interior point will be a subdivision of the polytope.
	With this construction, since the set of all facets is already generated,
	the conditions of the above theorems can be checked easily.
\end{remark}

\section{Turning mixed volume into semi-mixed types} \label{sec:semimix}

With minor modifications, the above proofs generalize directly to cases where
union is taken over only a subset of the polytopes whereby transforming the 
mixed volume into semi-mixed types even when the Newton polytopes are \emph{all distinct}.
This transformation will be particularly beneficial when a subset of polytopes have
nearly identical (but still different) set of vertices
(e.g. the Tensor Eigenvalue Problem to be discussed in \S \ref{sec:tensor}.
In such cases, taking the union over these similar subset of polytopes will not only
simplify the geometric information but also allow one to use the much more efficient
algorithms for mixed volume computation of semi-mixed types
\cite{chen_solutions_2014,gao_mixed_2003}.

\begin{corollary}[Semi-mixed version of Theorem \ref{thm:mainthma}]
    \label{thm:mainthmb-semi}
    Given nonempty finite sets $S_{i,j}\linebreak[1]\subset \Q^n$ for $i = 1,\dots,m$ 
    and $j=1,\dots,k_i$ with $k_i \in \Z^+$ and $k_1 + \cdots + k_m = n$, 
    let $Q_{i,j} = \conv (S_{i,j})$, $\tilde{S}_i = \bigcup_{j=1}^{k_i} S_{i,j}$, 
    and $\tilde{Q}_i = \conv ( \tilde{S}_i )$.
    If for each $i$, every positive dimensional face of $\tilde{Q}_i$ that
    intersect $S_{i,j}$ for some $j$ on at least two points must 
    intersect all $S_{i,1},\dots,S_{i,k_i}$, then
%    If for each $i$, every positive dimensional face of $\tilde{Q}_i$ contains 
%    at least one point in each $S_{i,1},\dots,S_{i,k_i}$, then
    \begin{equation*}
    \mvol (Q_{1,1},\dots,Q_{m,k_m}) =
    \mvol (\; 
        \underbrace{\tilde{Q}_1,\dots,\tilde{Q}_1}_{k_1}\,,
        \;\dots,\;
        \underbrace{\tilde{Q}_m,\dots,\tilde{Q}_m}_{k_m}
        \;).
    \end{equation*}
\end{corollary}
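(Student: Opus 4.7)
The plan is to mimic the proof of Theorem~\ref{thm:mainthma}, but replace the full $n \times n$ randomization by a \emph{block-diagonal} randomization that mixes only polynomials within each of the $m$ groups. Concretely, I take a Laurent polynomial system $P = (p_{1,1},\dots,p_{m,k_m})$ with $\supp(p_{i,j}) = S_{i,j}$ and coefficients chosen so generically that Theorem~\ref{thm:bernshtein-b} together with Remark~\ref{rmk:generic} yields $|\V_0^*(P)| = \mvol(Q_{1,1},\dots,Q_{m,k_m})$ with all zeros simple. I then let $A = \mathrm{diag}(A_1,\dots,A_m)$ with each $A_i$ a generic nonsingular $k_i \times k_i$ matrix, form $A \cdot P$, and observe, as in the proof of Theorem~\ref{thm:mainthma}, that $A \cdot P$ has the same zeros as $P$ and, for generic $A$, the support vector $(\underbrace{\tilde{S}_1,\dots,\tilde{S}_1}_{k_1},\dots,\underbrace{\tilde{S}_m,\dots,\tilde{S}_m}_{k_m})$. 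Applying Theorem~\ref{thm:bernshtein-a} to $A \cdot P$ immediately gives
\[
\mvol(Q_{1,1},\dots,Q_{m,k_m}) = |\V_0^*(P)| = |\V_0^*(A \cdot P)| \le \mvol(\underbrace{\tilde{Q}_1,\dots,\tilde{Q}_1}_{k_1},\dots,\underbrace{\tilde{Q}_m,\dots,\tilde{Q}_m}_{k_m}),
\]
settling one inequality.

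To promote this to equality I verify that $A \cdot P$ is itself in general position. Fix a nonzero $\boldalpha \in \R^n$ and, for each $i$, set $F_i = (\tilde{Q}_i)_{\boldalpha}$ and $I_i = \{\, j \mid F_i \cap S_{i,j} \ne \varnothing \,\}$. The crucial computation is a per-block analogue of the identity used in the proof of Theorem~\ref{thm:mainthma}: the support of every coordinate of $A_i \cdot P_i$ is $\tilde{S}_i$, and when one extracts the $\boldalpha$-minimizing terms only indices $j \in I_i$ contribute, since for $j \notin I_i$ the points $(S_{i,j})_{\boldalpha}$ lie strictly above $F_i$. Applying Lemma~\ref{lem:face} inside $\tilde{Q}_i$ then yields, block by block,
\[
\init_{\boldalpha}(A_i \cdot P_i) = A_i|_{I_i} \cdot \init_{\boldalpha}(P_i|_{I_i}),
\]
where $A_i|_{I_i}$ is the $k_i \times |I_i|$ submatrix of $A_i$ formed by columns indexed by $I_i$ and so has full column rank.

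The hypothesis now enters through a case split on each block. If $F_i$ is a vertex of $\tilde{Q}_i$, or if $F_i$ is positive dimensional with $I_i \subsetneq \{1,\dots,k_i\}$, the contrapositive of the assumption forces $|F_i \cap S_{i,j}| \le 1$ for every $j$, so each $\init_{\boldalpha}(p_{i,j})$ with $j \in I_i$ is a single monomial with nonzero coefficient. Combined with the full column rank of $A_i|_{I_i}$, this already forces the $i$-th block of $\init_{\boldalpha}(A \cdot P)$ to have no zero in $(\C^*)^n$, and hence $\V^*(\init_{\boldalpha}(A \cdot P)) = \varnothing$. Otherwise every $F_i$ is positive dimensional with $I_i = \{1,\dots,k_i\}$, so $A_i|_{I_i} = A_i$ is nonsingular and the zero set of the $i$-th block in $(\C^*)^n$ coincides with $\V^*(\init_{\boldalpha}(P_i))$. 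Intersecting over $i$, the common zero set equals $\V^*(\init_{\boldalpha}(P))$, which is empty since $P$ was chosen in general position.

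Thus $A \cdot P$ is in general position, Theorem~\ref{thm:bernshtein-b} upgrades the earlier inequality to the desired equality, and homogeneity extends the result from $\Z^n$ to $\Q^n$ supports exactly as in Theorems~\ref{thm:mainthma} and~\ref{thm:mainthmb}. I expect the main obstacle to be pinning down the per-block identity $\init_{\boldalpha}(A_i \cdot P_i) = A_i|_{I_i} \cdot \init_{\boldalpha}(P_i|_{I_i})$ and checking its two hidden features — that indices $j \notin I_i$ genuinely drop out, and that no spurious cancellations occur among the surviving terms — since it is through this formula that the block-wise geometric hypothesis on each $\tilde{Q}_i$ gets converted into the monomial or general-position structure that kills the zero set of $\init_{\boldalpha}(A \cdot P)$.
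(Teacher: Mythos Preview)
Your proposal is correct and follows essentially the same approach as the paper: a block-diagonal randomization $A=\mathrm{diag}(A_1,\dots,A_m)$, the per-block identity $\init_{\boldalpha}(A_i\cdot P_i)=A_i|_{I_i}\cdot\init_{\boldalpha}(P_i|_{I_i})$ via Lemma~\ref{lem:face}, and the same case analysis on whether some block reduces to monomials versus all $I_i$ being full. Your organization is in fact slightly cleaner, since you fold the vertex case and the proper-$I_i$ case together through the contrapositive of the hypothesis; the only minor imprecision is that for the vertex case the conclusion $|F_i\cap S_{i,j}|\le 1$ is automatic rather than a consequence of the hypothesis, but the argument goes through unchanged.
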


%Clearly, Theorem \ref{thm:mainthmb} is a special case of the above theorem
%with $m = 1$.
%The proof follows the same general structure.

\begin{proof}
    %We shall first prove the statement for cases where $k_i = 1$ for $i \ge 2$.
    Let $P = (p_{ij})_{i=1,\dots,m,j=1\dots,k_i}$ be a system of Laurent 
    polynomials in $\boldx = (x_1,\dots,x_n)$ with
    $p_{ij}(\boldx) = \sum_{\bolda \in S_{ij}} c_{i,j,\bolda} \boldx^{\bolda}$.
    Also define $P_i = (p_{i1},\dots,p_{ik_i})$ for each $i=1,\dots,m$.
    We further assume the coefficients are chosen so that $P$ is in general position.
    By \name{Bernshtein}'s First Theorem (Theorem \ref{thm:bernshtein-a}),
    \begin{equation*}
        %\label{equ:P-mv}
        |\V_0^*(P)| = \mvol (Q_{1,1}, \dots, Q_{m,k_m}).
    \end{equation*}
    Consider the randomization $A \cdot P$ of $P$ induced by a nonsingular 
    $n \times n$ block matrix
    \[
        A := 
        \left[
        \begin{smallmatrix}
        	A_1 &     &        &  \\
        	    & A_2 &        &  \\
        	    &     & \ddots &  \\
        	    &     &        & A_m
        \end{smallmatrix}
        \right]
    \]
    where each $A_i = [a^{(i)}_{j,k}]$ is a nonsingular $k_i \times k_i$ matrix.
    Since $A$ is nonsingular, $P(\boldx) = \boldzero$ if and only if 
    $(A \cdot P)(\boldx) = A \cdot (P(\boldx)) = \boldzero$. 
    So $\V_0^*(P) = \V_0^*(A \cdot P)$.
    
    As in the proof of Theorem \ref{thm:mainthma}, we further assume the entries 
    of $A_i$ for $i=1,\dots,m$ are chosen so that there are no cancellations of 
    terms in $A \cdot P$. 
    Then it is easy to verify that $A \cdot P$ is a semi-mixed system in the sense 
    that among its supports each $\tilde{S}_i$ appear $k_i$ times. That is,
    \[
        \supp(A \cdot P) = 
        (\; 
            \underbrace{\tilde{S}_1,\dots,\tilde{S}_1}_{k_1}\,,
            \;\dots,\;
            \underbrace{\tilde{S}_m,\dots,\tilde{S}_m}_{k_m}
        \;).
    \]
    By \name{Bernshtein}'s First Theorem (Theorem \ref{thm:bernshtein-a}), 
    \begin{equation*}
        %\label{equ:AP-mv}
        |\V_0^*(A \cdot P) |\le
        \mvol (\; 
        \underbrace{\tilde{Q}_1,\dots,\tilde{Q}_1}_{k_1}\,,
        \;\dots,\;
        \underbrace{\tilde{Q}_m,\dots,\tilde{Q}_m}_{k_m}
        \;).
    \end{equation*}
    We shall establish the equality by examining the initial systems of $A \cdot P$.
    For a nonzero vector $\boldalpha \in \R^n$, 
    let $F_i = (\tilde{Q}_i)_{\boldalpha}$ for $i=1,\dots,m$.
%    Since for a nonzero vector $\boldalpha \in \R^n$,
%    the proof of Theorem \ref{thm:mainthmb} already establishes the fact that
%    $\init_{\boldalpha} (A_i \cdot P_i)$ has no solution in $(\C^*)^n$ for each
%    $i=1,\dots,n$.
    We consider the following cases:
    
    First, if $F_i$ is a vertex for some $i$, then each Laurent polynomial 
    in $\init_{\boldalpha}(A_i \cdot P_i)$ would have only one term, 
    and hence $\V^*(\init_{\boldalpha}(A_i \cdot P_i)) = \varnothing$.
    But
    \begin{equation*}
        \init_{\boldalpha} (A \cdot P) =
        \begin{bmatrix}
            \init_{\boldalpha} (A_1 \cdot P_1) \\
            \vdots \\
            \init_{\boldalpha} (A_m \cdot P_m)
        \end{bmatrix},
    \end{equation*}
    So $\V^*(\init_{\boldalpha}(A \cdot P))$, being a subset of
    $\V^*(\init_{\boldalpha}(A_i \cdot P_i))$, must also be empty.
    %Now assume $F_i$ is positive dimensional for each $i=1,\dots,m$.
%    Similarly, if $F_i \cap S_{i,j}$ is a singleton for some $i$ and $j$,
%    then the proof for Theorem \ref{thm:mainthmb} (Case (B)) shows that
%    $\V^*(\init_{\boldalpha}(A_i \cdot P_i)) = \varnothing$.
%    Hence $\V^*(\init_{\boldalpha}(A \cdot P))$ must also be empty.
    
    Now suppose $F_1,\dots,F_m$ are all positive dimensional.
    We shall fix an $i \in \{1,\dots,m\}$, and let 
    $I_i = \{ j \in \{1,\dots,k_i\} \mid F_i \cap S_{i,j} \ne \varnothing \}$
    which must be nonempty.
    Then by Lemma \ref{lem:face},
    \begin{equation}
        %\small
        \init_{\boldalpha} (A_i \cdot P_i) = 
        \begin{bmatrix}
            \displaystyle
            \sum_{j=1}^{k_i} a^{(i)}_{1,j} %\left(
            \sum_{\bolda \in F_i \cap S_{i,j}} c_{i,j,\bolda} \boldx^{\bolda}
            %\right) 
            \\
            \vdots \\
            \displaystyle
            \sum_{j=1}^{k_i} a^{(i)}_{k_i,j} %\left(
            \sum_{\bolda \in F_i \cap S_{i,j}} c_{i,j,\bolda} \boldx^{\bolda}
            %\right) 
        \end{bmatrix} =
%        \begin{bmatrix}
%            \displaystyle
%            \sum_{j \in I_i} a^{(i)}_{1,j} %\left(
%            \sum_{\bolda \in (S_{i,j})_{\boldalpha}} c_{i,j,\bolda} \boldx^{\bolda}
%            %\right) 
%            \\
%            \vdots \\
%            \displaystyle
%            \sum_{j \in I_i} a^{(i)}_{k_i,j} %\left(
%            \sum_{\bolda \in (S_{i,j})_{\boldalpha}} c_{i,j,\bolda} \boldx^{\bolda}
%            %\right) 
%        \end{bmatrix} =
%        A_i  
%        \begin{bmatrix}
%        \displaystyle
%        \sum_{\bolda \in F \cap S_{i,1}} c_{i,1,\bolda} \boldx^{\bolda} \\
%        \vdots \\
%        \displaystyle
%        \sum_{\bolda \in F \cap S_{i,k_i}} c_{i,k_i,\bolda} \boldx^{\bolda}
%        \end{bmatrix}
%        =
        A_{I_i} %\init_{\boldalpha}(P_i).
        \begin{bmatrix}
            \displaystyle
            \sum_{\bolda \in (F \cap S_{i,j})} c_{i,j,\bolda} \boldx^{\bolda} %\\
%            \vdots \\
%            \displaystyle
%            \sum_{\bolda \in (F \cap S_{i,k_i})} c_{i,j,\bolda} \boldx^{\bolda}
        \end{bmatrix}_{j \in I_i}
    \end{equation}
    where $A_{I_i}$ is the matrix containing columns of $A_i$ indexed by $I_i$
    which is of rank $|I_i| \le k_i$.
    Its kernel must be $\{\boldzero\}$, 
    so $\init_{\boldalpha}(A_i \cdot P_i) = \boldzero$ if and only if
    $\sum_{\bolda \in (F \cap S_{i,j})} c_{i,j,\bolda} \boldx^{\bolda} = 0$
    for each $j \in I_i$.
    
    If $|F \cap S_{i,j}| = 1$ for all $j \in I_i$, then each of the above
    Laurent polynomials on the right hand side has only one term and hence 
    no zeros in $(\C^*)^n$.
    That is, $\V^*(\init_{\boldalpha}(A_i \cdot P_i)) = \varnothing$.
    Consequently $\V^*(\init_{\boldalpha}(A \cdot P))$, being a subset of
    $\V^*(\init_{\boldalpha}(A_i \cdot P_i))$ must also be empty.
    
    Now suppose for each $i = 1,\dots,m$, $|F_i \cap S_{i,j}| > 1$ for at least 
    one $j \in I_i$, then by assumption each $F_i$ must intersects each of the 
    supports $S_{i,1},\dots,S_{i,k_i}$. 
    So $I_i = \{1,\dots,k_i\}$ for each $i$ and hence
%    by \cref{lem:face},
%    \begin{small}
%    \begin{equation}
%        \init_{\boldalpha} (A_i \cdot P_i) = 
%        \begin{bmatrix}
%            \displaystyle
%            \sum_{j=1}^n a^{(i)}_{1j} %\left(
%            \sum_{\bolda \in F_i \cap S_{ij}} c_{i,j,\bolda} \boldx^{\bolda}
%            %\right) 
%            \\
%            \vdots \\
%            \displaystyle
%            \sum_{j=1}^n a^{(i)}_{k_i j} %\left(
%            \sum_{\bolda \in F_i \cap S_{ij}} c_{i,j,\bolda} \boldx^{\bolda}
%            %\right) 
%            \\
%        \end{bmatrix} =
%        A_i  
%        \begin{bmatrix}
%            \displaystyle
%            \sum_{\bolda \in F \cap S_{i,1}} c_{i,1,\bolda} \boldx^{\bolda} \\
%            \vdots \\
%            \displaystyle
%            \sum_{\bolda \in F \cap S_{i,k_i}} c_{i,k_i,\bolda} \boldx^{\bolda} \\
%        \end{bmatrix}
%        =
%        A_i \cdot \init_{\boldalpha}(P_i).
%    \end{equation}
%    \end{small}
%    for each $i=1,\dots,m$.
%    Since $A$ has the block diagonal structure, 
    $\init_{\boldalpha} (A \cdot P) = A \cdot \init_{\boldalpha}(P)$.
    Recall that $A$ is assumed to be nonsingular, so 
    $\init_{\boldalpha} (A \cdot P) = \boldzero$ if and only if
    $\init_{\boldalpha}(P) = \boldzero$.
    But $P$ is assumed to be in general position, so 
    $\V^*(\init_{\boldalpha}(A \cdot P)) = \V^*(\init_{\boldalpha}(P)) = \varnothing$.

    The above cases have shown that for all nonzero vector $\boldalpha \in \R^n$, 
    $\V^*(\init_{\boldalpha}(A \cdot P)) = \varnothing$.
    Then by \name{Bernshtein}'s Second Theorem (Theorem \ref{thm:bernshtein-b}),
    \[
        \mvol(Q_{1,1},\dots,Q_{m,k_m}) =
        |\V_0^* (P)| = 
        |\V_0^* (A \cdot P)| = 
        \mvol ( 
        \underbrace{\tilde{Q}_1,\dots,\tilde{Q}_1}_{k_1},
        \dots,
        \underbrace{\tilde{Q}_m,\dots,\tilde{Q}_m}_{k_m}
        ).
    \]
    Since both sides of the equation are homogeneous of degree 
    $k_1 + \cdots + k_m = n$ in any positive uniform scaling of all the polytopes,
    this result extends directly to cases where each $S_{i,j} \subset \Q^n$.
    \qed
\end{proof}

%\begin{theorem}[Semi-mixed version of Theorem \ref{thm:mainthmb}]
%    \label{thm:mainthmb-semi}
%    Given nonempty finite sets $S_{i,j} \Q^n$ for $i=1,\dots,m$
%    and $j=1,\dots,k_i$ with $k_i \in \Z^+$ such that $k_1 + \cdots + k_m = n$, 
%    let $Q_{i,j} := \conv(S_{i,j})$ 
%    and $\tilde{Q}_i := \conv(\bigcup_{j=1}^{k_i} S_{i,j})$. 
%%    If for every positive dimensional face $F_i$ of $\tilde{Q}_i$ that
%%    does not intersect all of the $S_{i,j}$'s there exists a $j \in \{1,\dots,n\}$
%%    such that $F \cap S_j$ is a singleton, then
%    If for each $i=1,\dots,n$ every positive dimensional face $F_i$ of 
%    $\tilde{Q}_i$ that intersect $S_{i,j}$ for some $j$ on at least two points 
%    must intersect $S_{i,j}$ for all $j=1,\dots,k_i$, then
%    \begin{equation*}
%        \mvol (Q_{1,1},\dots,Q_{m,k_m}) =
%        \mvol (\; 
%            \underbrace{\tilde{Q}_1,\dots,\tilde{Q}_1}_{k_1}\,,
%            \;\dots,\;
%            \underbrace{\tilde{Q}_m,\dots,\tilde{Q}_m}_{k_m}
%        \;).
%    \end{equation*}
%\end{theorem}

%===============================================================================
\section{Case studies} \label{sec:cases}

In this section, we apply the theorems proved above to concrete problems from
real-world applications to reduce the mixed volume computation into the 
unmixed cases (volume computation) or semi-mixed cases.

%===============================================================================
\subsection{Synchronization for coupled oscillators on cycle graphs}

The spontaneous synchronization in networks of interconnected oscillators 
is a ubiquitous phenomenon that has been discovered and studied in a wide range 
of scientific disciplines including physics, biology, chemistry, and engineering 
\cite{acebron_kuramoto_2005,dorfler_synchronization_2014}.
%Simply put, a coupled oscillator network is a collection of oscillators 
%that interact with one another via connections among them.
%and a graph describing the interaction among the oscillators. 
%Surprisingly, these two basic ingredients give rise to a rich dynamic behavior 
%that has continuously generated great interest in a wide scientific community.
%In a sense, this subject can be traced back to Huygens' study on coupled
%pendulum clock.
Here, as a case study, we focus on a classic model proposed by 
\name{Y. Kuramoto} \cite{kuramoto_self-entrainment_1975}.
% is a classical model 
%for studying a wide range of synchronization phenomena in a network
%\cite{kuramoto_chemical_2012}.
%It is a simple yet surprisingly rich model describes the interaction of a 
%network of oscillators.
%Intuitively, each oscillator can be considered as a spinning disk that
%when left alone would spin at some fixed \emph{natural frequency},
%and the network can be considered as a collection of spinning disks 
%connected by elastic springs.
%For a pair of connected oscillators, there is therefore a tug of war between 
%the tendency for each disk to spin at their natural frequency and the 
%springs trying to synchronize the two disks.
%Intricate synchronization phenomenon may arise when one study a network of
%interconnected oscillators.
For a network of $N = n+1$ oscillators, labeled as $0,\dots,n$, 
one basic model for describing the behavior of the oscillators is the 
system of differential equations
\begin{equation}
    \frac{d \theta_i}{dt} = 
    \omega_i - 
    \sum_{j=0}^{n} a_{i,j} \sin(\theta_{i}-\theta_{j})
    \quad \text{ for } i = 0,\dots,n
    \label{equ:kuramoto-ode}
\end{equation}
where each $\omega_i$ is the natural frequency of the $i$-th oscillator,  
each $a_{i,j}$ describes the coupling strength between the $i$-th and $j$-th 
oscillator (how strongly they influence each other),
and each $\theta_i = \theta_i(t)$ is the angle of the $i$-th oscillator.
This is a mathematical representation of the tug of war between 
the oscillators' tendency to oscillate in their own natural frequencies 
and the influence of their connected neighbors.
``Synchronization'' occurs when these two forces reach an equilibrium
for each of the oscillators:
A configuration $\boldsymbol{\theta} = (\theta_0,\dots,\theta_n) \in \R^N$ 
is said to be in \emph{synchronization}\footnote{%
    There are several related concepts of ``synchronization''
    in this context, which are listed in \cite{dorfler_synchronization_2014}.
    Here we only study a version of the so called \emph{frequency synchronization},
    a.k.a. \emph{frequency critical points}.
    In the general context such points are characterized by all
    $\frac{d\theta_i}{dt}$ converging to a common value (not necessarily zero).
    However, after switching to a rotational frame of reference, it is equivalent
    to requiring $\frac{d\theta_i}{dt} = 0$ for $i=0,\dots,n$.
} if $\frac{d\theta_i}{dt} = 0$ for $i=0,\dots,n$ at $\boldsymbol{\theta}$, 
i.e., %it satisfies the system of (nonlinear) transcendental equations given by
\begin{equation}
    \omega_i - \sum_{j=0}^{n} a_{i,j} \sin(\theta_{i}-\theta_{j}) = 0
    \quad \text{ for } i = 0,\dots,n
    \label{equ:kuramoto-sin}
\end{equation}
which will be referred to as the \emph{synchronization system} in our discussion.
The behavior of the synchronization solutions is completely understood
in cases where the underlying network forms a tree \cite{dekker_synchronization_2013}.
Here we shall study the simplest cases beyond trees --- cycle graphs.
That is, we assume for each $0 < i < n$ the $i$-th node is directly connected 
to two nodes: $i+1$ and $i-1$, and node 0 is directly connected to node $n$.
With the notation $i^+ = (i + 1) \mod N$ and $i^- = (i - 1) \mod N$,
the above system can be written as
\begin{equation}
    \omega_i - \sum_{j \in \{i^+,i^-\}} a_{i,j} \sin(\theta_{i}-\theta_{j}) = 0
    \quad \text{ for } i = 0,\dots,n
    \label{equ:kuramoto-ring}
\end{equation}

%in the variables $\theta_0,\dots,\theta_{n}$.
%A substantial amount of work have been devoted to the study of this system
%under the assumption that the underlying network is completely connected
%in the sense that every node is connected to every other node.
%This assumption amounts to all $a_{i,j} \ne 0$ for all pairs of $i,j$.
%However, realistic networks are generally very sparse.

Note that the solutions to \eqref{equ:kuramoto-sin} has an inherent degree 
of freedom in the sense that if $(\theta_0,\dots,\theta_n)$ is a solution,
so is $(\theta_0 + t,\dots,\theta_n + t)$ for any $t$.
%since these equations are clearly invariant under any uniform translations.
%$\theta_i \mapsto \theta_i + \alpha$ for all $\theta_i$s and a fixed $\alpha$, 
%any equilibrium has an inherent degree of freedom. 
%To remove this degree of freedom, 
It is therefore a common practice to fix $\theta_0 = 0$ 
and remove the first equation from the system, leaving us $n = N-1$ 
nonlinear equations in the $n$ angles $\theta_1,\dots,\theta_n$.
%(with $\theta_0 = 0$ being a constant).
%\begin{equation}
%    \label{equ:kuramoto-sin2}
%    \omega_i - \sum_{j=0}^{n} a_{i,j} \sin(\theta_{i}-\theta_{j}) = 0
%    \quad \text{ for } i = 1,\dots,n
%\end{equation}
%in which $\theta_0 = 0$ is considered as a constant.
%Moreover, since the $\sin$ is periodic, for any solution 
%$(\theta_1,\dots,\theta_n)$ of this system, each $\theta_i$ is only determined
%up to addition of multiples of $2\pi$, that is,
%$(\theta_1 + 2k_1\pi,\dots,\theta_n + 2k_n\pi)$ remains a solution.
%Therefore, we shall restrict $0 \le \theta_i < 2\pi$ for each $i=1,\dots,n$.
Using the transformation proposed in \cite{ChenMehta2017Network}, we shall turn 
the above system into a Laurent polynomial system to which we can apply 
Theorem \ref{thm:mainthmb}.
First, using the identity
$\sin(\theta_{i} - \theta_{j}) = 
\frac{1}{2\imag}(
e^{ \imag(\theta_{i} - \theta_{j})} - 
e^{-\imag(\theta_{i} - \theta_{j})})$
\eqref{equ:kuramoto-ring} can be transformed into
\begin{equation}
    \label{equ:exp}
    \omega_{i} - 
    \sum_{j \in \{i^+,i^-\}} 
    \frac{a_{i,j}}{2\imag} (
    e^{ \imag \theta_{i}} e^{-\imag \theta_{j}} - 
    e^{-\imag \theta_{i}} e^{ \imag \theta_{j}}
    ) = 0
    \quad \text{ for } i = 1,\dots,n.
\end{equation}
With the substitution $x_{i} := e^{\imag \theta_{i}}$ for $i = 1,\dots,n$,
we obtain the Laurent polynomial system
\begin{equation}
    \label{equ:kuramoto-laurent}
    \omega_{i} - \sum_{j \in \{i^+,i^-\}} a_{i,j}' 
    (x_i x_j^{-1} - x_j x_i^{-1}) = 0
    \quad \text{ for } i = 1,\dots,n
\end{equation}
where $a_{i,j}' = \frac{a_{i,j}}{2\imag}$ and $x_0 = 1$ is a constant.
Clearly, real solutions of \eqref{equ:kuramoto-ring} are represented by
$\C^*$-solutions of \eqref{equ:kuramoto-laurent}.
We are interested in computing the BKK bound of this Laurent polynomial system.
%for the number of such solutions
%as provided by the Bernshtein's Theorem.
By applying Theorem \ref{thm:mainthmb}, this can be done via volume computation
which is potentially easier to compute.
%In the mathematical abstraction, the underlying network can be viewed as a
%\emph{weighted graph} with nodes and edges representing oscillators and their
%connections respectively.
%Then each $a_{i,j}'$ is weights assigned on the edge $(i,j)$ with
%$a_{i,j}' = a_{j,i}' = 0$ if $(i,j)$ is not an edge.
%Here, we do not assume $a_{i,j}$ and $a_{j,i}$ to be the same, but we shall
%require that $a_{i,j}$ and $a_{j,i}$ are either both zero or both nonzero
%(that is, the graph is \emph{undirected}).

\begin{proposition}
    \label{pro:sync}
    Let $S_1,\dots,S_n$ be the supports of \eqref{equ:kuramoto-laurent}, then
    \[
        \mvol(\conv(S_1),\dots,\conv(S_n)) = 
        n! \vvol_n (\conv(S_1 \cup \cdots \cup S_n)).
    \]
\end{proposition}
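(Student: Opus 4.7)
The plan is to verify the hypotheses of Theorem \ref{thm:mainthmb} after a change of basis that symmetrizes the supports. Set $v_0 := \bolde_1$, $v_k := \bolde_{k+1} - \bolde_k$ for $k = 1,\dots,n-1$, and $v_n := -\bolde_n$. Then $v_0 + \cdots + v_n = \boldzero$ while the partial sums $v_0, v_0+v_1, \dots, v_0+\cdots+v_{n-1}$ recover $\bolde_1,\dots,\bolde_n$, so in particular $v_0,\dots,v_{n-1}$ form a basis of $\R^n$. Reading off the exponents in \eqref{equ:kuramoto-laurent} then gives
\[
  S_i = \{\boldzero,\; \pm v_{i-1},\; \pm v_i\}, \qquad i = 1,\dots,n,
\]
and therefore $\tilde{S} = \{\boldzero\} \cup \{\pm v_k : 0 \le k \le n\}$.

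Next I would set up a combinatorial dictionary between the positive-dimensional faces of $\conv(\tilde{S})$ and subsets of $\{0,1,\dots,n\}$. For a nonzero inner normal $\boldalpha \in \R^n$ and the corresponding face $F = (\conv(\tilde{S}))_{\boldalpha}$, let $\beta_k := \inner{v_k}{\boldalpha}$ and $M := \max_k |\beta_k|$. Because the $v_k$'s span $\R^n$, $M > 0$; because $\tilde{S} \setminus \{\boldzero\}$ is centrally symmetric about the origin, $h_{\boldalpha}(\conv(\tilde{S})) = -M < 0$, so $\boldzero \notin F$ and
\[
  F \cap \tilde{S} = \{-\operatorname{sgn}(\beta_k)\, v_k : k \in I_F\}, \qquad I_F := \{k : |\beta_k| = M\}.
\]
This yields $|F \cap S_i| = |I_F \cap \{i-1, i\}|$ for each $i \in \{1,\dots,n\}$, and $F$ is positive-dimensional exactly when $|I_F| \ge 2$.

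A short case split then finishes the argument. If $I_F = \{0, 1, \dots, n\}$, every $i$ gives $|F \cap S_i| = 2$, so condition (A) of Theorem \ref{thm:mainthmb} is satisfied. Otherwise $I_F$ is a nonempty proper subset of $\{0, 1, \dots, n\}$, and its indicator sequence has at least one transition at some adjacent pair $(k, k+1) \subset \{0,\dots,n\}$; taking $i = k+1 \in \{1,\dots,n\}$ then forces $|F \cap S_i| = 1$, so condition (B) holds. Condition (C) is never needed, and Theorem \ref{thm:mainthmb} gives the claimed equality. The main point requiring care is the dictionary itself --- checking that $\boldzero$ cannot lie on any proper face and that $F \cap \tilde{S}$ is exactly the set of $\pm v_k$ with $|\beta_k|$ maximal --- after which the case analysis reduces to a one-line combinatorial observation about runs in the 0/1-sequence indexing $I_F$.
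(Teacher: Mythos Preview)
Your argument is correct and follows the same route as the paper's: both verify conditions (A)/(B) of Theorem~\ref{thm:mainthmb} by showing that a positive-dimensional face $F$ with $h_{\boldalpha}<0$ selects at most one point from each antipodal pair $\pm v_k$, and then that either every $S_i$ meets $F$ or some $S_i$ meets it in exactly one point (the paper phrases this last step as ``propagation through the cycle,'' you phrase it as finding a transition in the indicator sequence of $I_F$). Your reindexing by the edge vectors $v_0,\dots,v_n$ tidies the bookkeeping and makes the paper's $h=0$ case visibly vacuous, but the underlying argument is the same.
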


\begin{proof}
    It is easy to verify that for each $i = 1,\dots,n$,
    \[
    S_i = \{ 
    \boldzero, 
    \bolde_i - \bolde_{i^+}, \bolde_{i^+} - \bolde_i,
    \bolde_i - \bolde_{i^-}, \bolde_{i^-} - \bolde_i,
    \}
    \]
    where $\bolde_0 := \boldzero$ and $\bolde_1,\dots,\bolde_n$ are the
    standard basis vectors for $\R^n$.
    Let $F$ be a positive dimensional face of 
    $\tilde{Q} = \conv(S_1 \cup \cdots \cup S_n)$,
    $\boldalpha$ be its inner normal, and 
    $h = h_{\boldalpha}(\tilde{Q})$.
    \smallskip
    
    First, if $h = 0$ then $\boldzero \in F$ since 
    $\inner{\boldzero}{\boldalpha} = 0 = h$. But $\boldzero \in S_j$ for each 
    $j=1,\dots,n$. So $F$ intersects all the supports $S_1,\dots,S_n$.
    \smallskip
    
    Now assume $h \ne 0$. Since $\boldzero \in \tilde{Q}$ we must have 
    $h \le \inner{\boldzero}{\boldalpha} = 0$. 
    So $h < 0$ and $\boldzero \not\in F$.
    Fix an $i \in \{1,\dots,n\}$ such that $F \cap S_i \ne \varnothing$.
    If $\bolde_i - \bolde_{i^+} \in F$ then 
    $\inner{\bolde_i - \bolde_{i^+}}{\boldalpha} = h$.
    In that case,
    \[
    \inner{\bolde_{i^+} - \bolde_i}{\boldalpha} = 
    - \inner{\bolde_i - \bolde_{i^+}}{\boldalpha} = -h > h
    \]
    and hence $\bolde_{i^+} - \bolde_{i} \not\in F$. 
    By reversing the signs, we can also conclude that
    if $\bolde_{i^+} - \bolde_{i} \in F$ then $\bolde_i - \bolde{i^+} \not\in F$.
    By a similar argument, if $F$ contains $\bolde_i - \bolde_{i^-}$ 
    then $F$ cannot contain $\bolde_{i^-} - \bolde_i$ and vice versa.
    Consequently, unless $F \cap S_i$ is a singleton $F$ must contain
    one point from each of the subsets 
    $\{ \bolde_i - \bolde_{i^+}, \bolde_{i^+} - \bolde_i \}$ and
    $\{ \bolde_i - \bolde_{i^-}, \bolde_{i^-} - \bolde_i \}$.
    In other words, unless $F \cap S_i$ is a singleton,
    \begin{itemize}[leftmargin=4ex]
        \item $F$ intersects both $S_{i^-}$ and $S_{i^+}$ if $i \in \{2,\dots,n-1\}$;
        \item $F$ intersects $S_{i^-}$ if $i = n$; or
        \item $F$ intersects $S_{i^+}$ if $i = 1$.
    \end{itemize}
    Recall that $i^-$ and $i^+$ are the neighbors of node $i$ in the cycle graph.
    Thus the above observation propagate through the cycle, and we can conclude 
    that the positive dimensional face $F$ either intersect some $S_j$ at a 
    single point or intersect $S_1,\dots,S_n$.
    Then by Theorem \ref{thm:mainthmb}
    \[
        \mvol(\conv(S_1),\dots,\conv(S_n)) = 
        n! \vvol_n (\conv(S_1 \cup \cdots \cup S_n)).
    \]
    \qed
\end{proof}

With the above result, we can turn the root counting problem for the
synchronization system \eqref{equ:kuramoto-sin} into a volume computation problem:
$\conv(S_1 \cup \cdots \cup S_n)$ can be written as
\begin{equation*}
    \nabla_n := \conv(\; \{\boldzero\} \;\cup\;
    \{ \, 
    \bolda_i - \bolda_{i+1}, \,
    \bolda_{i+1} - \bolda_i, \,
    \bolda_i - \bolda_{i-1}, \,
    \bolda_{i-1} - \bolda_i \,
	\}_{i=1,\dots,n} \;)
\end{equation*}
where
\[
    \begin{cases}
    \bolda_i = \boldzero &\text{if } i = 0 \text{ or } i = n+1 \\
    \bolda_i = \bolde_i  &\text{if } i = 1,\dots,n.
    \end{cases}
\]
Here $\nabla_n$ can be considered as a convex polytope that encodes the 
connectivity information of the underlying network with each direct connection 
(edge) contributing a pair of points in its construction.
The root count of the synchronization system is therefore bounded by
the normalized volume of this polytope:
\begin{corollary}
    \label{cor:sync-AP}
    For a cycle graph of $N = n+1$ oscillators, the number of  
    isolated synchronization solutions of \eqref{equ:kuramoto-ring} 
    is less than or equal to $n! \vvol_n (\nabla_n)$.
\end{corollary}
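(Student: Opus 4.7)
The plan is to assemble three ingredients already established in the paper: the polynomial reformulation via $x_i = e^{\imag\theta_i}$, Bernshtein's First Theorem (Theorem \ref{thm:bernshtein-a}), and Proposition \ref{pro:sync}. The identification of $\conv(S_1 \cup \cdots \cup S_n)$ with the polytope $\nabla_n$ will be a bookkeeping step coming directly from the explicit description of the supports $S_i$ already extracted in the proof of Proposition \ref{pro:sync}.

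First I would observe that each real solution $(\theta_1,\dots,\theta_n)$ of the cycle synchronization system \eqref{equ:kuramoto-ring} produces, via $x_i = e^{\imag\theta_i}$, a point $(x_1,\dots,x_n) \in (\C^*)^n$ that solves the Laurent polynomial system $P$ given by \eqref{equ:kuramoto-laurent}. Moreover, isolated real synchronization solutions (modulo $2\pi$ in each coordinate) correspond to isolated points of $\V^*(P)$, so the number of isolated synchronization solutions is bounded above by $|\V_0^*(P)|$. Bernshtein's First Theorem then gives
\begin{equation*}
    |\V_0^*(P)| \;\le\; \mvol\bigl(\conv(S_1),\dots,\conv(S_n)\bigr),
\end{equation*}
where $S_i = \supp(p_i)$ are the supports of the components of $P$.

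Next I would invoke Proposition \ref{pro:sync}, which established exactly that this mixed volume equals $n!\,\vvol_n\bigl(\conv(S_1 \cup \cdots \cup S_n)\bigr)$. It then remains to identify this convex hull with $\nabla_n$. From the computation of the supports already carried out in the proof of Proposition \ref{pro:sync},
\begin{equation*}
S_i \;=\; \{\boldzero,\, \bolde_i - \bolde_{i^+},\, \bolde_{i^+} - \bolde_i,\, \bolde_i - \bolde_{i^-},\, \bolde_{i^-} - \bolde_i\},
\end{equation*}
with the convention $\bolde_0 = \boldzero$. Using the convention that $\bolda_0 = \bolda_{n+1} = \boldzero$ and $\bolda_i = \bolde_i$ for $i=1,\dots,n$, the indices wrapping around the cycle $i^+ = (i+1)\bmod(n+1)$ and $i^- = (i-1)\bmod(n+1)$ translate precisely into the pairs appearing in the definition of $\nabla_n$, so $\conv(S_1 \cup \cdots \cup S_n) = \nabla_n$.

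There is no real obstacle here; the content of the result is already packaged in Proposition \ref{pro:sync}, and the only care needed is in handling the wrap-around indexing (so that the vertex labelled $0$ and the vertex labelled $n+1$ both correspond to the zero vector in $\Z^n$ after setting $x_0 = 1$) when matching $S_1 \cup \cdots \cup S_n$ to the explicit generating set for $\nabla_n$. Chaining the three inequalities/equalities then yields the claimed upper bound $n!\,\vvol_n(\nabla_n)$ on the number of isolated synchronization solutions.
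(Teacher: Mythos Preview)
Your proposal is correct and follows essentially the same route as the paper: the corollary is stated immediately after Proposition~\ref{pro:sync} and the explicit description of $\nabla_n$, and the paper treats it as a direct consequence of chaining the substitution $x_i = e^{\imag\theta_i}$, Bernshtein's bound, Proposition~\ref{pro:sync}, and the identification $\conv(S_1\cup\cdots\cup S_n)=\nabla_n$. You have simply made each of these steps explicit, including the care with the wrap-around indexing, which the paper leaves to the reader.
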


%===============================================================================
\subsection{Noonburg's neural network model}

We now apply our results to a classical family of polynomial systems proposed 
by \name{V. W. Noonburg} \cite{noonburg_neural_1989} for modeling the behavior 
of neural networks.
Though the BKK bounds of this family have been completely understood by the 
analysis of \name{Y. Zhang} \cite{zhang_mixed_2008}, they are still widely used 
as standard benchmark problems for testing solvers for polynomial systems 
\cite{attardi_posso_1995}.
The applicability of the results established above is therefore still a
meaningful indication of their usefulness.

One classical approach is to consider a neural network as a network of 
interconnected cells in which activity levels at each cell are inhibited 
or excited by the activity of the other cells. 
Mathematically this is equivalent to an interacting set of populations with 
the densities of each population affected negatively or positively by its 
competition or cooperation with the other populations. 
This analogy allows the use of the Lotka-Volterra model in the study of 
neural networks. 
A key mathematical problem in this approach is the polynomial system
\begin{equation}
	\begin{aligned}
		\sum_{j=1}^n \Delta_{ij} x_1 x_j^2 - c x_1 + 1 &= 0 \\
		&\vdotswithin{=} \\
		\sum_{j=1}^n \Delta_{ij} x_n x_j^2 - c x_n + 1 &= 0 
	\end{aligned}
	\label{equ:noon}
\end{equation}
in the variables $x_1,\dots,x_n$, where $\{\Delta_{ij}\}$,
with $\Delta_{ij}=0$ for $i=j$ and $\Delta_{ij} = \pm 1$ otherwise,
encode the types of connections between cell $i$ and cell $j$
while the constant $c$ dictates the activity level of cells.
In the following, this system is simply referred to as the 
\name{Noonburg} system.
Since our discussion focuses only on the monomial structure of this polynomial
system and not the coefficients, we shall fix $\Delta_{ij} = 1$ for $i \ne j$
and $c = 1.1$ (a particular choice of the coefficients that appeared in
several studies).

With Theorem \ref{thm:mainthmb}, we shall show the BKK bound of \eqref{equ:noon} 
(which gives the generic number of complex solution of this system)
is always the normalized volume of the convex hull of the union of the
Newton polytopes of the above system.
Consequently, the BKK bound can be computed as the unmixed case 
(volume computation).

\begin{proposition}
    Let $S_1,\dots,S_n$ be the supports of the \name{Noonburg} system 
    \eqref{equ:noon}. Then
    \[
	    \mvol(\conv(S_1),\dots,\conv(S_n)) = 
        n! \vvol(\,\conv(S_1 \cup \cdots \cup S_n)\,).
    \]
\end{proposition}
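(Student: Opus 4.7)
The plan is to apply Theorem~\ref{thm:mainthmb}. I read off from \eqref{equ:noon} that $S_i = \{\boldzero, \bolde_i\} \cup \{\bolde_i + 2\bolde_j : j \ne i\}$, so every point of $\tilde S := S_1 \cup \cdots \cup S_n$ is either $\boldzero$, some $\bolde_i$, or some $\bolde_i + 2\bolde_j$ with $i \ne j$. For a positive-dimensional face $F = (\conv(\tilde S))_{\boldalpha}$, I write $h := h_{\boldalpha}(\conv(\tilde S))$ and $I := \{i : F \cap S_i \ne \varnothing\}$. Since $\boldzero \in \tilde S$, necessarily $h \le 0$. If $h = 0$, then $\boldzero \in F \cap S_i$ for every $i$, so condition (A) holds immediately.

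When $h < 0$, I first dispose of the subcase that some $\bolde_i$ lies on $F$ (equivalently $\alpha_i = h$). A short argument shows this forces $F \cap S_i = \{\bolde_i\}$, hence (B): if $\bolde_i + 2\bolde_k$ were also on $F$, then $\alpha_k = 0$, and then $\inner{\bolde_k + 2\bolde_i}{\boldalpha} = 2h < h$ would contradict the minimality of $h$. So I may assume no $\bolde_i$ lies on $F$, and for each $i \in I$ define $K_i := \{k \ne i : \bolde_i + 2\bolde_k \in F\}$ so that $F \cap S_i = \{\bolde_i + 2\bolde_k : k \in K_i\}$. Condition (B) applies whenever some $|K_i| = 1$.

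The main obstacle is the remaining case: $|K_i| \ge 2$ for every $i \in I$, where I expect condition (C) to take effect. For $i \in I$ and $k, l \in K_i$ with $k \ne l$, the equalities $\alpha_i + 2\alpha_k = \alpha_i + 2\alpha_l = h$ give $\alpha_k = \alpha_l = (h - \alpha_i)/2$. The two minimality inequalities $\inner{\bolde_k + 2\bolde_l}{\boldalpha} \ge h$ and $\inner{\bolde_k + 2\bolde_i}{\boldalpha} \ge h$, valid because both points lie in $\tilde S$, translate into $\alpha_i \le h/3$ and $\alpha_i \ge h/3$ respectively, forcing $\alpha_i = h/3$. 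Propagating this across $I$, I would conclude that $I = \{k : \alpha_k = h/3\}$, $K_i = I \setminus \{i\}$, and $|I| \ge 3$. Every vertex of $F$ is then of the form $\bolde_i + 2\bolde_j$ with $i, j \in I$ distinct, so $F \cap S_i$ (for each $i \in I$), and in fact all of $F$, lies in the coordinate subspace spanned by $\{\bolde_j : j \in I\}$, which has dimension $|I|$; and since each such vertex satisfies $\sum_{k \in I} x_k = 3$, one has $\dim F \le |I| - 1 < |I|$. This establishes condition (C) and completes the application of Theorem~\ref{thm:mainthmb}.
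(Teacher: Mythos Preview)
Your proof is correct and follows essentially the same route as the paper: both verify the hypotheses of Theorem~\ref{thm:mainthmb} by casing on the value of $h$ and on which types of points ($\boldzero$, $\bolde_i$, or $\bolde_i+2\bolde_j$) land on $F$, and both reduce the hard case to showing $\alpha_i = h/3$ for every $i\in I$ via the same pair of inequalities coming from $\bolde_k+2\bolde_l$ and $\bolde_k+2\bolde_i$. The only cosmetic difference is that when $\bolde_i\in F$ you derive condition~(B) directly (showing no $\bolde_i+2\bolde_k$ can accompany $\bolde_i$ on $F$ once $h<0$), whereas the paper phrases the same contradiction as forcing $h=0$ and hence condition~(A); the underlying inequality $\langle \bolde_k+2\bolde_i,\boldalpha\rangle = 2h$ is identical.
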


\begin{proof}
    We can see that for $i=1,\dots,n$,
    \[
	    S_i = 
	    \{\bolde_i, \boldzero \} \cup 
	    \{ \bolde_i + 2 \bolde_j \}_{j=1,\dots,n,j \ne i}\,.
    \]
    Let $F$ be a positive dimensional face of 
    $\tilde{Q} := \conv(S_1 \cup \cdots \cup S_n)$,
    and let $\boldalpha = (\alpha_1,\dots,\alpha_n) \in \R^n$ be its inner normal, 
    then $F = \{ \boldx \in \tilde{Q} \mid \inner{\boldx}{\boldalpha}=h\}$
    where $h = h_{\boldalpha}(\tilde{Q})$.
    Since $\boldzero \in \tilde{Q}$, we must have 
    $h \le 0 = \inner{\boldzero}{\boldalpha}$.
    Fix any $i \in \{1,\dots,n\}$ and assume $|F \cap S_i| \ge 2$.
    \smallskip
    
    (Case 1) Suppose $\boldzero \in F$, 
    then $\varnothing \ne F \cap S_j \ni \boldzero$ for each
    $j = 1, \dots, n$.%, the face $F$ must intersect each $S_j$.
    \smallskip
    
    (Case 2) Suppose $\bolde_i,\bolde_i + 2\bolde_j \in F$ for some 
    $j \in \{1,\dots,n\}$ and $j \ne i$. Then 
    \begin{align*}
	    h &= \inner{\bolde_i}{\boldalpha} = \alpha_i \\
	    h &= \inner{\bolde_i+2\bolde_j}{\boldalpha} = \alpha_i + 2 \alpha_j
    \end{align*}
    Therefore $\alpha_i = h$ and $\alpha_j = 0$.
    But $\bolde_j + 2\bolde_i \in S_j \subseteq \tilde{Q}$, so we must have
    \begin{equation*}
	    h \le 
	    \inner{\bolde_j + 2\bolde_i}{\boldalpha} =
	    \alpha_j + 2\alpha_i =
	    2h,
    \end{equation*}
    i.e., $h \ge 0$. Recall that $h \le 0$. So we must have $h=0$ and hence
    $\boldzero \in F$ since $\inner{\boldzero}{\boldalpha} = 0 = h$.
    By the argument in case 1, $F$ intersects each $S_j$ for $j=1,\dots,n$.
    \smallskip
    
    (Case 3) Finally, suppose $\boldzero, \bolde_i \not\in F$,
    then $F \cap S_i = \{ \bolde_i + 2 \bolde_j \}_{j \in J}$ for some set
    $J \subset \{1,\dots,n\} \setminus \{i\}$ with $|J| \ge 2$. That is,
    \begin{align*}
	    h &= \inner{\bolde_i + 2\bolde_j}{\boldalpha} = \alpha_i + 2 \alpha_j
	    &&\text{for all } j \in J \\
	    h &< \inner{\bolde_i + 2\bolde_k}{\boldalpha} = \alpha_i + 2 \alpha_k
	    &&\text{for all } k \not\in J \text{ and } k \ne i.
    \end{align*}
    So we can conclude that $\alpha_j$ for all $j \in J$ are the same.
    Let $\beta$ be this constant, then $\alpha_i = h - 2 \beta$ and 
    $\alpha_k > \beta$ for all $k \in \{1,\dots,n\} \setminus (\{i\} \cup J)$.
    Now fix a distinct pair of $j,j' \in J$, and we shall consider 
    $S_j \subseteq \tilde{S}$.
    Since $\bolde_j + 2\bolde_i, \bolde_j + 2\bolde_{j'} \in S_j$, we must have
    \begin{align*}
    h &\le \inner{\bolde_j + 2\bolde_i\,\,\,}{\boldalpha} = 
    \alpha_j + 2\alpha_i\, =
    \beta + 2h - 4\beta = 2h - 3\beta \\
    h &\le \inner{\bolde_j + 2\bolde_{j'}}{\boldalpha} = 
    \alpha_j + 2\alpha_{j'} = 
    \beta + 2 \beta = 3\beta
    \end{align*}
    which reduce to $h \le 3\beta \le h$. That is, $h = 3\beta$ and hence
    $\alpha_i = h - 2\beta = \beta = \alpha_j < 0$ for all $j \in J$ and 
    $\alpha_i < \alpha_k$ for any $k \in \{1,\dots,n\} \setminus (\{i\} \cup J)$.
    Therefore,
    \begin{align*}
    &
    \begin{aligned}
    \inner{\bolde_i + 2\bolde_j\,\,}{\boldalpha} &= 3\beta = h 
    \;\forall\, j \in J \\
    \inner{\bolde_j + 2\bolde_i\,\,}{\boldalpha} &= 3\beta = h
    \;\forall\, j \in J \\
    \inner{\bolde_j + 2\bolde_{j'}}{\boldalpha} &= 3\beta = h
    \;\forall\, j,j' \in J, j\ne j'
    \end{aligned}
    &&
    \begin{aligned}
    \inner{\bolde_i\, + 2\bolde_k\,\,}{\boldalpha} &> 3\beta 
    \;\forall\, k \not\in J \\
    \inner{\bolde_k\, + 2\bolde_j\,\,}{\boldalpha} &> 3\beta 
    \;\forall\, k \not\in J \text{ and } j \in J \\
    \inner{\bolde_k + 2\bolde_{k'}}{\boldalpha} &> 3\beta 
    \;\forall\, k,k' \not\in J, k\ne k'.
    \end{aligned}
    \end{align*}
    Consequently, 
    \begin{align*}
    F \cap S_k &= \varnothing
    \;\text{for any } k \in \{1,\dots,n\} \setminus (\{i\} \cup J)\\
    F \cap S_j &= \{ \bolde_j + 2 \bolde_i \} 
    \cup \{ \bolde_j + 2 \bolde_{j'} \mid j' \in J, j' \ne j \}
    \;\text{for any } j \in J \\
    F \cap S_i &= \{ \bolde_i + 2 \bolde_j \mid j \in J \}
    \end{align*}
    Since for each $j \in J$, $F \cap S_j$ contains only points that are
    linear combinations of $\bolde_i$ and $\{ \bolde_j \mid j \in J \}$,
    $F \cap S_j$ is contained in a common coordinate subspace of dimension
    $m := |J| + 1$ spanned by $\{\bolde_i\} \cup \{ \bolde_j \mid j \in J \}$.
    Let $\pi : \R^n \to \R^m$ be the projection to this coordinate subspace.
    Since a point in $\boldx = (x_1,\dots,x_m) \in \pi(F)$ still must satisfy 
    $\beta x_1 + \cdots + \beta x_m = 3 \beta$, $\pi(F)$ is of dimension less
    than $m$.
    \smallskip
    
    The above three cases have exhausted all possibilities for which $F$
    intersects some $S_i$ on at least two points.
    That is, each positive dimensional face $F$ of $\tilde{Q}$ satisfies 
    one of the conditions listed in Theorem \ref{thm:mainthmb}. Therefore,
    \[
	    \mvol(\conv(S)_1,\dots,\conv(S)_n) = 
	    n! \vvol(\conv(S_1 \cup \cdots S_n)).
    \]
    \qed
\end{proof}

%===============================================================================
\subsection{Algebraic load flow equations} \label{sec:powerflow}

In power engineering, ``load-flow study'' is a mathematical analysis of the 
flow of electric power in an network of connected devices (a power system)
which are of crucial importance in the design, operation, and control of 
power systems \cite{kundur_power_1994}.
The ``load flow equations'', a family of nonlinear systems of equations, 
are among the most important mathematical tools in these studies.
Though many variations of these equations have been proposed and studied,
the fruitful algebraic approach 
\cite{baillieul_critical_1984,baillieul_geometric_1982,li_random_1987} 
has been the focus of many recent studies
\cite{chen_network_2016,guo_determining_1990,marecek_power_2014,mehta_algebraic_2015,mehta_numerical_2014}.
%We shall therefore focus on such algebraic load flow equations in our discussion.
Here, the mathematical abstraction of a power system is captured by a graph 
$G = (B,E)$ together with a complex matrix $Y = (Y_{ij})$ where
$B = \{0,1,\dots,|B|-1\}$ is the finite set of nodes representing the ``buses'', 
$E$ is the set of edges representing the transmission lines connecting buses, 
and the matrix $Y$, known as the \textit{nodal admittance matrix}, 
assigns a nonzero complex value $Y_{ij}$ to each edge $(i,j) \in E$
(with $Y_{ij} = Y_{ji} = 0$ if $(i,j) \notin E$).
%While $Y$ is \emph{not} assumed to be symmetric, 
%we require $Y_{ij}$ and $Y_{ji}$ to be either both zero or both nonzero.
As a convention, we further require all nodes to be connected with itself via a
``loop'' to reflect the nonzero diagonal entries $Y_{ii}$ known as the 
\emph{self-admittances}.
%That is, we require $(i,i) \in E$ for any $i \in B$.
The main interest of load flow study are the complex valued voltage on each bus
denoted by $v_0, v_1,\dots,v_n,u_0,u_1,\dots,u_n$ where $n = |B|-1$.
Among them, $v_0$ and $u_0$ are fixed constant while the rest are considered
to be variables.
In this setup, the algebraic load flow equations is a system of $2n$ 
polynomial equations in $2n$ variables:
\begin{equation}
    P_{G,Y} (v_1,\dots,v_n,u_1,\dots,u_n) = 
    \begin{cases}
        \sum_{k=0}^{n} \bar{Y}_{1k}\, v_1 u_k - S_1   = 0\\
        \quad \vdots \\
        \sum_{k=0}^{n} \bar{Y}_{nk}   v_n u_k - S_n   = 0\\[1.5ex]
        \sum_{k=0}^{n} Y_{1k}\,\, u_1 v_k - \bar{S}_1 = 0 \\
        \quad \vdots \\
        \sum_{k=0}^{n} Y_{nk}\,   u_n v_k - \bar{S}_n = 0
    \end{cases}
    \label{equ:powerflow}
\end{equation}
where $S_1,\dots,S_n \in \C^*$ are constants representing constraints 
chosen for the purpose of load flow studies.
The root count of this system in $(\C^*)^{2n}$ has been widely studied
\cite{baillieul_critical_1984,baillieul_geometric_1982,chen_network_2016,li_random_1987,marecek_power_2014}.
Here, we shall apply Theorem \ref{thm:mainthmb} to show that the BKK bound of
\eqref{equ:powerflow} reduces to the unmixed case for any graph with
more than two nodes.
In particular, we shall prove the conjecture that the BKK bound of 
\eqref{equ:powerflow} is precisely the normalized volume of the 
``adjacency polytope'' of the graph $G$ \cite{ChenMehta2017Network}.
%Inspired by common practices from coding theory, the adjacency polytope 
%\cite{chen_network_2015} of $G$ is defined to be a polytope that encode the 
%topology of the power network.
Using the notations $\bolde_0 = \boldzero$ and $(\bolde_i,\bolde_j) \in \R^{2n}$ 
for the concatenation of the two vectors $\bolde_i \in \R^n$ and $\bolde_j \in \R^n$,
the adjacency polytope of $G$ is defined to be
\begin{equation}
    \label{equ:powerflow-ap}
    \nabla_G := \conv
        (
            \{ \boldzero \} \cup 
            \{ (\bolde_i, \bolde_j) \}_{(i,j) \in E} 
        ).
\end{equation}
Note that the graph $G$ is undirected, so for each edge $(i,j) \in E$,
we must have $(j,i) \in E$ by definition.
Therefore each edge in $G$ contributes a line segment
(from $(\bolde_i,\bolde_j)$ to $(\bolde_j,\bolde_i)$) in the construction
of $\nabla_G$.
The following conjecture is suggested in \cite{ChenMehta2017Network}.
Here, we provide a simple proof using Theorem \ref{thm:mainthmb}:
\begin{proposition}
    \label{pro:powerflow}
    Let $S_1,S_1',S_2,,S_2',\dots,S_n,S_n'$ be the supports of \eqref{equ:powerflow},
    then
    \[
        \mvol (\conv(S_1),\conv(S_1'),\dots,\conv(S_n),\conv(S_n') ) =
        (2n)! \, \vvol_{2n} (\nabla_G).
    \]
\end{proposition}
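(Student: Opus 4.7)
The plan is to apply Theorem \ref{thm:mainthmb} directly to the $2n$ supports of \eqref{equ:powerflow} in $\R^{2n}$. Writing $\R^{2n} = \R^n \times \R^n$ with coordinates $(v_1,\dots,v_n,u_1,\dots,u_n)$ and adopting the convention $\bolde_0 = \boldzero$, the supports read
\[
    S_i = \{\boldzero\} \cup \{(\bolde_i,\bolde_k) \mid (i,k) \in E\},
    \qquad
    S_i' = \{\boldzero\} \cup \{(\bolde_k,\bolde_i) \mid (i,k) \in E\}
\]
for $i = 1,\dots,n$, so that $\tilde{S} = \{\boldzero\} \cup \{(\bolde_i,\bolde_j) \mid (i,j) \in E\}$ and $\conv(\tilde{S}) = \nabla_G$ by \eqref{equ:powerflow-ap}. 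It therefore suffices to show that every positive dimensional face $F$ of $\nabla_G$ satisfies one of the conditions (A), (B), (C) of Theorem \ref{thm:mainthmb}.

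Fix such a face with inner normal $\boldalpha = (\boldbeta, \boldsymbol{\gamma}) \in \R^n \times \R^n$ and set $h = h_{\boldalpha}(\nabla_G)$; extend the convention by $\beta_0 = \gamma_0 = 0$. Since $\boldzero \in \nabla_G$ one has $h \le 0$, and a vertex $(\bolde_i,\bolde_j)$ of $\nabla_G$ lies in $F$ precisely when $(i,j) \in E$ and $\beta_i + \gamma_j = h$. If $h = 0$ then $\boldzero \in F$, and since $\boldzero$ lies in every $S_i$ and every $S_j'$, condition (A) holds. Now assume $h < 0$, so $\boldzero \notin F$. If some $F \cap S_i$ or $F \cap S_j'$ happens to be a singleton, condition (B) holds immediately.

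The substantive case is $h < 0$ with every nonempty intersection of size at least two. Let $I_v = \{i \mid F \cap S_i \ne \varnothing\}$ and $I_u = \{j \mid F \cap S_j' \ne \varnothing\}$; under the reindexing of the $2n$ supports in Theorem \ref{thm:mainthmb}, these combine into an index set of size $|I| = |I_v| + |I_u|$. Any vertex $(\bolde_i,\bolde_j)$ of $F$ with $i \ge 1$ forces $i \in I_v$ (via $F \cap S_i$), and likewise $j \ge 1$ forces $j \in I_u$. Consequently every vertex of $F$, and thus $F$ itself, lies in the coordinate subspace
\[
    V_I = \mathrm{span}\bigl(\{(\bolde_i,\boldzero) \mid i \in I_v\} \cup \{(\boldzero,\bolde_j) \mid j \in I_u\}\bigr) \subset \R^{2n},
\]
which has dimension exactly $|I|$, so the projection of $F$ onto $V_I$ is $F$ itself. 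Because $V_I$ is a linear subspace containing $\boldzero$ while every vertex of $F$ satisfies $\inner{\cdot}{\boldalpha} = h < 0$, the restriction of $\boldalpha$ to $V_I$ cannot be identically zero; hence $F$ is contained in the proper affine hyperplane $V_I \cap \{\boldx \mid \inner{\boldx}{\boldalpha} = h\}$ of $V_I$, which has dimension $|I|-1 < |I|$. This establishes condition (C).

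The main obstacle is the analysis of condition (C): one has to pinpoint the correct coordinate subspace $V_I$ dictated by the $v/u$ block structure of $\R^{2n}$, verify that $F$ actually lies inside it (rather than merely projecting into it), and then use the strict negativity $h < 0$ together with the linearity of $V_I$ to force the required strict dimension drop. With conditions (A), (B), (C) verified in their respective cases, Theorem \ref{thm:mainthmb} delivers the claimed equality $\mvol(\conv(S_1),\conv(S_1'),\dots,\conv(S_n),\conv(S_n')) = (2n)!\,\vvol_{2n}(\nabla_G)$.
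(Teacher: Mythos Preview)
Your proof is correct and follows essentially the same route as the paper: identify the supports, recognize $\conv(\tilde S)=\nabla_G$, and verify the hypotheses of Theorem~\ref{thm:mainthmb} by handling the case $\boldzero\in F$ via condition~(A) and the case $\boldzero\notin F$ via condition~(C) using the coordinate subspace spanned by $\{(\bolde_i,\boldzero)\}_{i\in I_v}\cup\{(\boldzero,\bolde_j)\}_{j\in I_u}$. Your treatment is in fact slightly more explicit than the paper's in justifying the dimension drop (you spell out why $h<0$ forces $\boldalpha|_{V_I}\neq 0$), and your intermediate case~(B) split is harmless but unnecessary, since the condition~(C) argument already covers it.
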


\begin{proof}
    It is easy to verify that for $i,j = 1,\dots,n$
    \begin{align*}
        S_i  &= \{\boldzero\} \cup \{ (\bolde_i,\bolde_j) \}_{j \in \mathcal{N}(i)}
        &
        S_j' &= \{\boldzero\} \cup \{ (\bolde_i,\bolde_j) \}_{i \in \mathcal{N}(j)}
    \end{align*}
    where $\mathcal{N}(k)$ denotes the set of nodes neighboring $k$ in $G$.
    Let $\tilde{S} = S_1 \cup S_1' \cup \cdots \cup S_n \cup S_n'$.
    Then we can see that
    \[
        \nabla_G = 
        \conv(S_1 \cup S_1' \cup \cdots \cup S_n \cup S_n') =
        \conv(\tilde{S}).
    \]
    Therefore we simply have to verify that the conditions listed in
    Theorem \ref{thm:mainthmb} are satisfied.
    Let $F$ be a proper positive dimensional face of $\nabla_G$.
    If $\boldzero \in F$ then $F$ must intersect all supports 
    since $\boldzero$ is a common point of all the supports.
    
    Now suppose $\boldzero \not\in F$. 
    Let $K$ be the set of $(i,j) \in E$ such that $(\bolde_i,\bolde_j) \in F$.
    With $\pi_1 : \Z^2 \to \Z$ and $\pi_2 : \Z^2 \to \Z$ being the projections 
    onto first and second coordinates respectively, define
    $I = \pi_1(K) \setminus \{0\}$ and $J = \pi_2(K) \setminus \{0\}$.
    Note that the only supports that contains a point $(\bolde_i,\bolde_j)$
    are $S_i$ and $S_j'$, so $(\bolde_i,\bolde_j) \in F$ implies that 
    $F \cap S_i \ne \varnothing$ and $F \cap S_j' \ne \varnothing$.
    Moreover, the supports that intersect $F$ are precisely
    $S_i$ for $i \in I$ and $S_j'$ for $j \in J$.
    Hence the total number of supports that intersect $F$ is $|I| + |J|$.
    Also notice that $F \cap \tilde{S} = \{(\bolde_i,\bolde_j)\}_{(i,j) \in K}$
    is contained in the coordinate subspace spanned by
    $\{(\bolde_i,\boldzero)\}_{i \in I} \cup \{(\boldzero,\bolde_j)\}_{j \in J}$
    which is of dimension $|I| + |J|$.
    Finally, we can see that the restriction of inner normal vector of $F$
    on this coordinate subspace must be a non-constant linear functional.
    Therefore the face $F$ satisfies the condition (C) in Theorem \ref{thm:mainthmb}.
    
    Then by Theorem \ref{thm:mainthmb},
    \[
        \mvol(S_1,\dots,S_1',\dots,S_n,S_n') =
        (2n)! \, \vvol_{2n} (\conv(\tilde{S})) = 
        (2n)! \, \vvol_{2n} (\nabla_G).
    \]
    \qed
\end{proof}

Returning to the algebraic context of the load flow equations, the above
proposition shall be interpreted as follows:

\begin{corollary}
    Given a graph $G$, the number of isolated solutions of the induced
    algebraic load flow system \eqref{equ:powerflow} in $(\C^*)^{2n}$ is
    bounded by the normalized volume of the adjacency polytope $\nabla_G$.
\end{corollary}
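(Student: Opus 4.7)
The plan is to chain two results already established in the paper: \name{Bernshtein}'s First Theorem (Theorem \ref{thm:bernshtein-a}) and Proposition \ref{pro:powerflow}. The corollary translates the algebraic content of the proposition — an equality of polyhedral invariants — into an upper bound on a root count, which is exactly the kind of transfer Theorem \ref{thm:bernshtein-a} is designed to make.

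First, I would identify the supports of the system \eqref{equ:powerflow}. Reading off the $i$-th equation in the top block, its support is $S_i = \{\boldzero\} \cup \{(\bolde_i,\bolde_j) \mid j \in \mathcal{N}(i)\}$ (the constant term $S_i$ contributes $\boldzero$, and each summand $\bar{Y}_{ik} v_i u_k$ with $Y_{ik} \ne 0$ contributes $(\bolde_i,\bolde_k)$; the term with $k=0$ also contributes $\boldzero$ since $\bolde_0 = \boldzero$). Analogously, the $j$-th equation in the bottom block has support $S_j'$ as defined in the proof of Proposition \ref{pro:powerflow}. These are precisely the $2n$ supports handled there.

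Next, I would apply \name{Bernshtein}'s First Theorem to the Laurent polynomial system $P_{G,Y}$, viewed as a system of $2n$ equations in the $2n$ variables $v_1,\dots,v_n,u_1,\dots,u_n$. This immediately yields
\begin{equation*}
    |\V_0^*(P_{G,Y})| \;\le\; \mvol(\conv(S_1), \conv(S_1'), \dots, \conv(S_n), \conv(S_n')).
\end{equation*}
By Proposition \ref{pro:powerflow}, the right-hand side equals $(2n)! \, \vvol_{2n}(\nabla_G)$, which is the normalized volume of the adjacency polytope. Combining the two gives the desired bound.

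There is no real obstacle here; the corollary is essentially a bookkeeping step that re-packages Proposition \ref{pro:powerflow} via Theorem \ref{thm:bernshtein-a}. The only minor care needed is to verify that the supports of \eqref{equ:powerflow} match those described in the proposition (including the fact that every diagonal term $\bar{Y}_{ii} v_i u_i$ with $Y_{ii} \ne 0$ contributes $(\bolde_i,\bolde_i)$, which corresponds to the self-loop at node $i$ whose presence is guaranteed by the convention stated before \eqref{equ:powerflow}). Once this identification is made, the two-line argument above completes the proof.
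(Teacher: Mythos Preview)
Your argument is correct and matches the paper's treatment: the corollary is stated there without proof, as the immediate reinterpretation of Proposition~\ref{pro:powerflow} through \name{Bernshtein}'s First Theorem (Theorem~\ref{thm:bernshtein-a}). One small slip in your parenthetical: the $k=0$ summand $\bar{Y}_{i0}\, v_i u_0$ contributes the exponent $(\bolde_i,\bolde_0) = (\bolde_i,\boldzero)$, not $\boldzero$ --- only the constant $-S_i$ gives $\boldzero$ --- but this does not affect your identification of the supports with those in the proposition, so the argument stands.
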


\begin{remark}
    We should also note that the load flow system \eqref{equ:powerflow} and the
    synchronization system \eqref{equ:kuramoto-sin}, though originally proposed in
    very different contexts, are intimately related.
    Indeed, the synchronization system can be considered as a specialized 
    version of the algebraic load flow system \cite{dorfler_synchronization_2014}.
    Therefore, in that sense, this result generalizes Corollary \ref{cor:sync-AP}.
\end{remark}

%===============================================================================
\subsection{Tensor eigenvalue problem} \label{sec:tensor}

Given a vector space $V$ isomorphic to $\C^n$ (or $\R^n$), a multi-linear form
$F : (V^*)^m \to \C^n$ naturally give rise to a tensor $\mathcal{A}$ of order $m$ 
which can be encoded as an $m$-way array of dimensions $[n \times \cdots \times n]$
with respect to a fixed coordinate system.
An operation central to the tensor eigenvalue problem is a form of contraction
between a tensor $\mathcal{A}$ and a vector $\boldx = (x_1,\dots,x_n) \in \C^n$
given by
\begin{align*}
    \mathcal{A} \, \boldx^{m-1} &\in \C^n &
    &\text{where} &
    (\mathcal{A} \, \boldx^{m-1})_j &= \sum_{i_2=1}^n \cdots \sum_{i_m=1}^n 
    a_{j,i_2,\dots,i_m} x_{i_2} \cdots x_{i_m}.
\end{align*}
Clearly, each entry in $\mathcal{A} \boldx^{m-1}$ is a homogeneous polynomial 
in the variables $x_1,\dots,x_n$ of degree $m-1$.
Based on this contraction operation, several different notion of 
\textit{tensor eigenvalues/eigenvectors} have been proposed.
They can be defined by a family of algebraic equations:
Given a positive integer $m'$, an \emph{eigenpair} of $\mathcal{A}$ is a tuple 
$(\boldx,\lambda) \in \C^n \times \C$ with $\boldx \ne \boldzero$ such that
\begin{equation*}
    (\mathcal{A} \, \boldx^{m-1})_j = \lambda x_j^{m'-1}
    \quad\text{for } j=1,\dots,n.
\end{equation*}
This definition depends on the choice of $m'$, and researchers have studied
properties of eigenpairs for different choices of $m'$ 
\cite{qi_eigenvalues_2005}.
An eigenpair defined thusly has an inherent degree of freedom:
if $(\boldx,\lambda)$ is an eigenpair, then so is 
$(t \cdot \boldx, t^{m-m'} \lambda)$ for any $t \in \C$. 
Eigenpairs related by this relation are considered to be \emph{equivalent}.
From a computational point of view, it is convenient to pick certain 
representatives from each equivalent class.
Several different convention for picking representatives have been proposed 
\cite{qi_eigenvalues_2005}), following standard practices of 
Numerical Algebraic Geometry \cite{sommese_numerical_1996,sommese_numerical_2005}, 
the additional linear equation $\boldc^\top \boldx = c_0$ for some complex vector 
$\boldc \in \C^n$ has been adopted \cite{zhou_computing_2015} as a natural 
criterion for picking a representative of an eigenpair ---
it can be verified that for randomly chosen $\boldc \in \C^n$ and $c_0 \in \C$,
with probability one, there is precisely one point $(\boldx,\lambda)$ in each 
equivalent class that will satisfy $\boldc^\top \boldx = c_0$.
With this additional ``normalization condition'', an eigenpair of $\mathcal{A}$
is defined by
\begin{equation}
    \label{equ:eigenpair}
    \begin{aligned}
        (\mathcal{A} \, \boldx^{m-1})_j &= \lambda x_j^{m'-1}
        \quad\text{for } j=1,\dots,n, \\
        \boldc^\top \boldx &= c_0.
    \end{aligned}
\end{equation}
The upper bound on the root count of this system is established via the theory
of toric algebraic geometry \cite{cartwright_number_2013}.

At the same time, the much broader notion of \emph{generalized tensor 
eigenvalue problem} \cite{chang_eigenvalue_2009,ding_generalized_2015} 
has been developed:
Given an $m$-order tensor $\mathcal{A}$ and an $m'$-order tensor $\mathcal{B}$, 
a vector $(\boldx, \lambda) = (x_1,\dots,x_n,\lambda) \in \C^{n+1}$ with 
$\boldx \ne \boldzero$ is said to be a \emph{$\mathcal{B}$-eigenpair} of 
$\mathcal{A}$ if
\begin{equation}
    \label{equ:B-eigenpair}
    \begin{aligned}
        \mathcal{A} \boldx^{m-1} &= \lambda \mathcal{B} \boldx^{m'-1} \\
        \boldc^\top \boldx &= c_0.
    \end{aligned}
\end{equation}
This notion unifies several related tensor eigenvalue problems proposed previously.
Indeed, it can be verified that \eqref{equ:eigenpair} is a special case of
\eqref{equ:B-eigenpair} where $\mathcal{B}$ is chosen to be the ``identity'' tensor
with $\mathcal{B}_{i,\dots,i} = 1$ and zero elsewhere.
The upper bound for the number of distinct number of $\mathcal{B}$-eigenpair
was established \cite{chen_computing_2015,zhou_computing_2015} via the theory 
of BKK bound.\footnote{%
    Actually, the stronger \name{Li-Wang} extension \cite{li_bkk_1996} of the 
    BKK bound was used in this analysis.
    This extension produces an upper bound of the root count of a 
    polynomial system in $\C^n$ (rather than $(\C^*)^n$).
    \revision{
        Alternatively, the stable mixed cells method~\cite{huber_bernsteins_1997}
        could potentially produce even tighter root count bound in $\C^n$,
        though it is more difficult to compute.
    }
}
%This bound is generically exact in the sense that for almost all choices of
%the entries in $\mathcal{A}$ and $\mathcal{B}$, 

Here, using Corollary \ref{thm:mainthmb-semi}, we shall show that even though 
\eqref{equ:eigenpair} is a special case of \eqref{equ:B-eigenpair},
the two have the same BKK bound.
We start with \revision{a} simple example.

\begin{example}
    For $n = 2$, $m = 3$, and $m'=2$, equation \eqref{equ:eigenpair} for the 
    $[2 \times 2 \times 2]$ tensor $\mathcal{A} = [a_{i,j,k}]$ is given by
    \begin{equation}
        \label{equ:eigenexample}
        \begin{aligned}
            a_{1,1,1} x_{1} x_{1} + a_{1,1,2} x_{1} x_{2} + 
            a_{1,2,1} x_{2} x_{1} + a_{1,2,2} x_{2} x_{2} -
            \lambda x_1^{1} &= 0 \\
            a_{2,1,1} x_{1} x_{1} + a_{2,1,2} x_{1} x_{2} + 
            a_{2,2,1} x_{2} x_{1} + a_{2,2,2} x_{2} x_{2} -
            \lambda x_2^{1} &= 0 \\
            c_1 x_1 + c_2 x_2 + c_3 x_3 - c_0 &= 0.
        \end{aligned}
    \end{equation}
    The supports of the these equations are
    \begin{equation}
        \begin{aligned}
            S_1 &= \{ (2,0,0), (1,1,0), (0,2,0), (1,0,1) \} \\
            S_2 &= \{ (2,0,0), (1,1,0), (0,2,0), (0,1,1) \} \\
            S_3 &= \{ (1,0,0), (0,1,0), (0,0,1), (0,0,0) \}
        \end{aligned}
    \end{equation}
    Here, $S_1$ and $S_2$ are almost identical.
    Therefore, for computing the mixed volume, it is advantageous to consider
    $\tilde{S} := S_1 \cup S_2$ and compute instead the mixed volume
    $\mvol(\tilde{S},\tilde{S},S_3)$ which is of semi-mixed type.
    Indeed, there is only one point in $S_1$ that is not in $S_2$
    and vice versa.
    Therefore, for any positive dimensional proper face $F$ of $\tilde{S}$,
    if $F \cap S_1$ contains at least two points, then it must also
    intersect $S_2$.
    Similarly, if $F \cap S_2$ contains at least two points then it must also
    intersect $S_1$.
    By Corollary \ref{thm:mainthmb-semi},
    \[
        \mvol(\conv(S_1),\conv(S_2),\conv(S_3)) = 
        \mvol(\conv(\tilde{S}),\conv(\tilde{S}),\conv(S_3)).
    \]
    %That is, the BKK bound of the special tensor eigenpair problem
    %\eqref{equ:eigenpair}.
    \smallskip
    
    On the other hand, with the tensor $\mathcal{B} = [b_{i,j}]$ of order 2,
    the generalized tensor eigenvalue problem \eqref{equ:B-eigenpair} becomes
    \begin{equation}
        \label{equ:B-eigenexample}
        \begin{aligned}
            a_{1,1,1} x_{1} x_{1} + a_{1,1,2} x_{1} x_{2} + 
            a_{1,2,1} x_{2} x_{1} + a_{1,2,2} x_{2} x_{2} -
            \lambda b_{1,1} x_1 - \lambda b_{1,2} x_2 &= 0 \\
            a_{2,1,1} x_{1} x_{1} + a_{2,1,2} x_{1} x_{2} + 
            a_{2,2,1} x_{2} x_{1} + a_{2,2,2} x_{2} x_{2} -
            \lambda b_{2,1} x_1 - \lambda b_{2,2} x_2 &= 0 \\
            c_1 x_1 + c_2 x_2 + c_3 x_3 - c_0 &= 0.
        \end{aligned}
    \end{equation}
    The supports of the these equations are
    \begin{equation}
        \begin{aligned}
            T_1 &= \{ (2,0,0), (1,1,0), (0,2,0), (1,0,1), (0,1,1) \} \\
            T_2 &= \{ (2,0,0), (1,1,0), (0,2,0), (1,0,1), (0,1,1) \} \\
            T_3 &= \{ (1,0,0), (0,1,0), (0,0,1), (0,0,0) \}
        \end{aligned}
    \end{equation}
    So the BKK bound for the generalized tensor eigenvalue problem 
    \eqref{equ:B-eigenexample} is given by the mixed volume 
    $\mvol(\conv(T_1),\conv(T_2),\conv(T_3))$.
    But we can see $T_1$ and $T_2$ are identical, and $T_1 = T_2 = \tilde{S}$, 
    so \eqref{equ:B-eigenexample} and \eqref{equ:eigenexample} have the exact
    same BKK bound.
\end{example}
\medskip

In this example, via a simple counting argument, we can apply Corollary 
\ref{thm:mainthmb-semi} and show that for $n=2$, $m=3$, and $m'=2$, the two 
tensor eigenpair formulation \eqref{equ:eigenpair} and \eqref{equ:B-eigenpair} 
have the exact same BKK bound.
Indeed, this result hold for any dimension/order:
That is, for a fixed $m'$, \eqref{equ:eigenpair} and \eqref{equ:B-eigenpair}
have the same BKK bound:

\begin{proposition}
    For fixed integers $n$, $m$, and $m'$, all greater than 2, 
    the two polynomial systems 
    \eqref{equ:eigenpair} and \eqref{equ:B-eigenpair} have the same BKK bound.
\end{proposition}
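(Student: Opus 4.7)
The plan is to reduce the proposition to an application of the semi-mixed reduction in Corollary \ref{thm:mainthmb-semi}. First I would write down the supports of both systems in the variable ordering $(x_1,\dots,x_n,\lambda)$. For \eqref{equ:eigenpair}, the $j$-th tensor equation has support
\[ S_j \;=\; \{(\boldy,0) \mid \boldy \in \Z_{\ge 0}^n,\ |\boldy|=m-1\} \;\cup\; \{((m'-1)\bolde_j,\,1)\}, \qquad j=1,\dots,n, \]
and the normalization $\boldc^\top\boldx = c_0$ contributes a further support $S_{n+1}$. For \eqref{equ:B-eigenpair}, all $n$ tensor equations share the identical support
\[ T \;=\; \{(\boldy,0) \mid |\boldy|=m-1\} \;\cup\; \{(\boldy,1) \mid |\boldy|=m'-1\}, \]
with $T_{n+1}=S_{n+1}$, so the BKK bound of \eqref{equ:B-eigenpair} equals $\mvol(\conv T,\dots,\conv T,\conv S_{n+1})$ with $\conv T$ appearing $n$ times.

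The key geometric observation is that $\conv(S_1 \cup \cdots \cup S_n) = \conv T$: the lattice points contributed at $\lambda=1$ by the $S_j$ are precisely the vertices $\{((m'-1)\bolde_j,1)\}_{j=1}^n$ of the scaled simplex that fills the $\lambda=1$ slice of $\conv T$, while the $\lambda=0$ slices agree by construction. Setting $\tilde S := S_1 \cup \cdots \cup S_n$, the proposition therefore reduces to showing
\[ \mvol(\conv S_1,\dots,\conv S_n,\conv S_{n+1}) \;=\; \mvol(\underbrace{\conv\tilde S,\dots,\conv\tilde S}_{n},\,\conv S_{n+1}), \]
which is the conclusion of Corollary \ref{thm:mainthmb-semi} applied with the grouping $k_1=n$, $k_2=1$.

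The main work is verifying the face hypothesis of that corollary for the first group $\{S_1,\dots,S_n\}$: every proper positive-dimensional face $F$ of $\conv\tilde S$ that meets some $S_j$ in at least two points must meet every $S_j$. I would classify such an $F$ by its inner normal $(\boldalpha,\beta) \in \R^n \times \R$. The polytope $\conv\tilde S$ is sandwiched between the bottom simplex $(m-1)\sigma\times\{0\}$ and the top simplex $(m'-1)\sigma\times\{1\}$, where $\sigma$ denotes the standard simplex in $\R^n$, and the minimum of $\boldalpha\cdot\boldy + \beta\lambda$ on $\conv\tilde S$ is the smaller of $h_0 = (m-1)\min_i \alpha_i$ and $h_1 = (m'-1)\min_i \alpha_i + \beta$. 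The face $F$ is attained on the corresponding subsimplex indexed by $I^* := \{i : \alpha_i = \min_k \alpha_k\}$.

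The hard part, and the heart of the proof, is the resulting dichotomy. If $h_0 \le h_1$, then $F$ contains the bottom vertex $((m-1)\bolde_i,0)$ for every $i \in I^*$; these points lie in the common bottom slice of $\tilde S$ and hence belong to every $S_j$, so $F$ meets every $S_j$. If instead $h_1 < h_0$, then $F$ lies entirely in the hyperplane $\lambda=1$, and the only lattice points of $\tilde S$ there are the $n$ distinct vertices $((m'-1)\bolde_j,1)$, each belonging to exactly one $S_j$; hence $|F \cap S_j| \le 1$ for every $j$ and the hypothesis is vacuously satisfied. With this case analysis in hand, Corollary \ref{thm:mainthmb-semi} immediately yields the desired equality of BKK bounds.
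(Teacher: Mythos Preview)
Your proposal is correct and follows the same overall strategy as the paper: identify the supports, observe that $\conv(S_1\cup\cdots\cup S_n)=\conv T$, and invoke Corollary~\ref{thm:mainthmb-semi} with the grouping $k_1=n$, $k_2=1$. The only difference is in verifying the face hypothesis for the first group. You carry out an explicit case analysis on the inner normal $(\boldalpha,\beta)$, splitting on whether the minimum is attained on the bottom or the top slice. The paper bypasses this geometry entirely with a one-line counting argument: since $S_i\setminus S_j=\{((m'-1)\bolde_i,1)\}$ is a singleton for every $j\ne i$, any face meeting $S_i$ in two or more points cannot be contained in $S_i\setminus S_j$ and hence meets $S_j$. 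Your route is sound, but the paper's shortcut is worth noting---it exploits directly that the $S_i$ differ pairwise in only one point, and avoids having to compute the face structure of $\conv\tilde S$ at all.
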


\begin{proof}
    Both polynomial systems have $n + 1$ equations in the $n+1$ unknowns
    $x_1,\dots,x_n$, and $\lambda$.
    Let $\bar{m} = m' - 1$, then the supports of \eqref{equ:eigenpair} are
    \begin{align*}
        S_1 &= \{ (\bar{m} \cdot \bolde_1,1) \} \cup \{ (\bolda,0) \in (\N_0)^{n+1} \;:\; |\bolda|_1 = m - 1 \} \\
        &\vdots \\
        S_n &= \{ (\bar{m} \cdot \bolde_n,1) \} \cup \{ (\bolda,0) \in (\N_0)^{n+1} \;:\; |\bolda|_1 = m - 1 \} \\
        S_{n+1} &= \{ (\bolde_1,0), \dots, (\bolde_n,0), \boldzero \}
    \end{align*}
    where $\N_0 = \N \cup \{0\}$ and for a vector $\bolda = (a_1,\dots,a_n)$
    we use the notation $|\bolda|_1 = a_1 + \cdots + a_n$.
    Let $\tilde{S} = S_1 \cup \cdots \cup S_n$ and $\tilde{Q} = \conv(\tilde{S})$.
    Here, the first $n$ supports are almost identical.
    Indeed $S_i \setminus S_j = \{ (\bar{m} \cdot \bolde_i,1) \}$ for any 
    $j \ne i$.
    This observation allows the application of Corollary \ref{thm:mainthmb-semi}:
    Given any positive dimensional face $F$ of $\tilde{Q}$.
    Suppose $F$ intersects $S_i$ for some $i \in \{1,\dots,n\}$ with $F \cap S_i$
    containing at least two points. Since 
    $|S_i \setminus S_j| = 1$ for any $j=1,\dots,n$ and $j \ne i$.
    $F \cap S_i$ cannot be contained inside $S_i \setminus S_j$.
    That is, $F$ must intersect each $S_i$ for $i=1,\dots,n$.
    Therefore, by Corollary \ref{thm:mainthmb-semi},
    \begin{equation}
        \label{equ:eigenmv}
        \mvol (\conv(S_1),\dots,\conv(S_{n+1})) =
        \mvol (\;\underbrace{\tilde{Q},\dots,\tilde{Q}}_n\;,\; \conv(S_{n+1})\,)
    \end{equation}
    which is the BKK bound for the system \eqref{equ:eigenpair}. \smallskip
    
    On the other hand, the supports of the system \eqref{equ:B-eigenpair} are
    \begin{align*}
        T_1 &= 
        \{ (\bolda,0      ) \in (\N_0)^{n+1} \;:\; |\bolda|_1 = m - 1 \} \cup 
        \{ (\bolda,1) \in (\N_0)^{n+1} \;:\; |\bolda|_1 = \bar{m} \} \\
        &\vdots \\
        T_n &= 
        \{ (\bolda,0      ) \in (\N_0)^{n+1} \;:\; |\bolda|_1 = m - 1 \} \cup 
        \{ (\bolda,1) \in (\N_0)^{n+1} \;:\; |\bolda|_1 = \bar{m} \} \\
        T_{n+1} &= \{ (\bolde_1,0), \dots, (\bolde_n,0),\boldzero \}.
    \end{align*}
    Here, the first $n$ supports are identical, so this system is naturally of 
    semi-mixed type.
    Let $T := T_1 = \cdots = T_n$. Clearly, $\tilde{S} \subseteq T$, so 
    $\tilde{Q} = \conv(\tilde{S}) \subseteq \conv(T)$.
    But notice that each point of the form $(\bolda,1)$ with 
    $|\bolda|_1 = \bar{m}$ can be written as a convex combination of the points 
    $(\bar{m} \cdot \bolde_1,1),\dots,(\bar{m} \cdot \bolde_n,1) \in \tilde{S}$.
    Therefore $T \subset \conv(\tilde{S}) = \tilde{Q}$.
    Consequently, the BKK bound for \eqref{equ:B-eigenpair} is
    \begin{align*}
        \mvol(\conv(T_1),\dots,\conv(T_n),\conv(T_{n+1})) &=
        \mvol(\underbrace{\conv(T),\dots,\conv(T)}_n, T_{n+1}) \\ &=
        \mvol(\;\underbrace{\tilde{Q},\dots,\tilde{Q}}_n \;,\; \conv(T_{n+1}) ) \\ &=
        \mvol(\;\underbrace{\tilde{Q},\dots,\tilde{Q}}_n \;,\; \conv(S_{n+1}) ).
    \end{align*}
    Comparing this equality with \eqref{equ:eigenmv}, we can conclude that
    the systems \eqref{equ:eigenpair} and \eqref{equ:B-eigenpair} have the same
    BKK bound.
    \qed
\end{proof}

%===============================================================================
\section{Accelerating mixed volume computation} \label{sec:results}

Via the theory of BKK bound and polyhedral homotopy, mixed volume computation 
became an important problem in computational algebraic geometry.
Previous sections established the conditions under which the equality
$\mvol(Q_1,\dots,Q_n) = n! \vvol_n(\conv(Q_1 \cup \cdots \cup Q_n))$ holds
and demonstrated its use in concrete problems from applications.
In this section we show the substantial computational advantage that one could
potentially achieve through this transformation.

The algebraic load flow system \eqref{equ:powerflow} is reviewed in
\S \ref{sec:powerflow}, and with Proposition \ref{pro:powerflow} we established that 
its BKK bound satisfies the condition given in Theorem \ref{thm:mainthmb} 
and can therefore be computed as the normalized volume of the 
``adjacency polytope'' \eqref{equ:powerflow-ap}.
%Our experiment with several different software packages for mixed volume and
%volume computation suggests that this transformation greatly accelerates the 
%computation of the BKK bound, especially for larger power networks.
In the following, we compared the actual CPU time\footnote{%
    Since most of the software packages to be used rely on randomized algorithms,
    the average of CPU time from 5 different runs are used in the table.
    All runs are performed on the same workstation equipped with an
    \tech{Intel\textsuperscript{\textregistered} Core\textsuperscript{TM} i5-3570K} 
    processor running at $3.4$GHz.
    For a meaningful comparison, \tech{Hom4PS-3}, which is designed to compute
    mixed volume in parallel, is configured to use only one thread (serial mode)
    in this case.
} consumed by various programs 
for computing the BKK bound of the algebraic load flow equations using 
these two different approaches (mixed volume v.s. normalized volume).
For mixed volume computation, we tested popular packages
\tech{DEMiCs} \cite{mizutani_demics:_2008,mizutani_dynamic_2007}, 
\tech{MixedVol-2.0} \cite{lee_mixed_2011}, 
and \tech{Hom4PS-3} \cite{chen_mixed_2014,chen_mixed_2017}.
For volume computation, we tested the widely used package \tech{lrs} 
\cite{avis_reverse_1993}
and a new package named \tech{libtropicana} \cite{chen_libtropicana:_2016}
(see \S \ref{sec:tropicana}) developed by the author specifically for this
project based on a pivoting algorithm similar to the core algorithm of \tech{lrs}.
Table \ref{tab:powerflow-ring} shows such comparisons for the algebraic load flow
equations induced by cycle graphs consisting of 14 to 18 nodes.
In all these cases, converting mixed volume computation into volume computation 
via Proposition \ref{pro:powerflow} brought consistently over 11 fold speedup.
\begin{table}[h]
    \centering
    \begin{tabular}{lrrrrrrrr}
    	\toprule
    	N.o. nodes                            &     14 &     15 &      16 &       17 &       18 &  \\ \midrule
    	BKK Bound                             &  57344 & 122880 &  262144 &   557056 &  1179648 &  \\
    	Mixed volume  via \tech{Hom4PS-3}     & 12.54s & 27.57s & 1m9.16s & 2m49.05s & 6m31.35s &  \\
    	Adj. polytope via \tech{libtropicana} &  1.02s &  2.35s &   5.68s &    13.55 &   31.66s &  \\
    	Speedup ratio                         &  12.29 &   11.7 &    12.2 &     12.5 &    12.36 &  \\ \bottomrule
    \end{tabular}
    \caption{
        Speedup ratio of the adjacency polytope method (using \tech{libtropicana}) 
        over the conventional mixed volume method (using \tech{Hom4PS-3}) for 
        computing the BKK bound of the algebraic load flow equations
        \eqref{equ:powerflow} induced by cycle graphs of sizes $N=14$ to $N=18$.
    }
    \label{tab:powerflow-ring}
\end{table}

\begin{wrapfigure}[13]{r}{0.40\textwidth}
    \centering
    \includegraphics[width=0.39\textwidth]{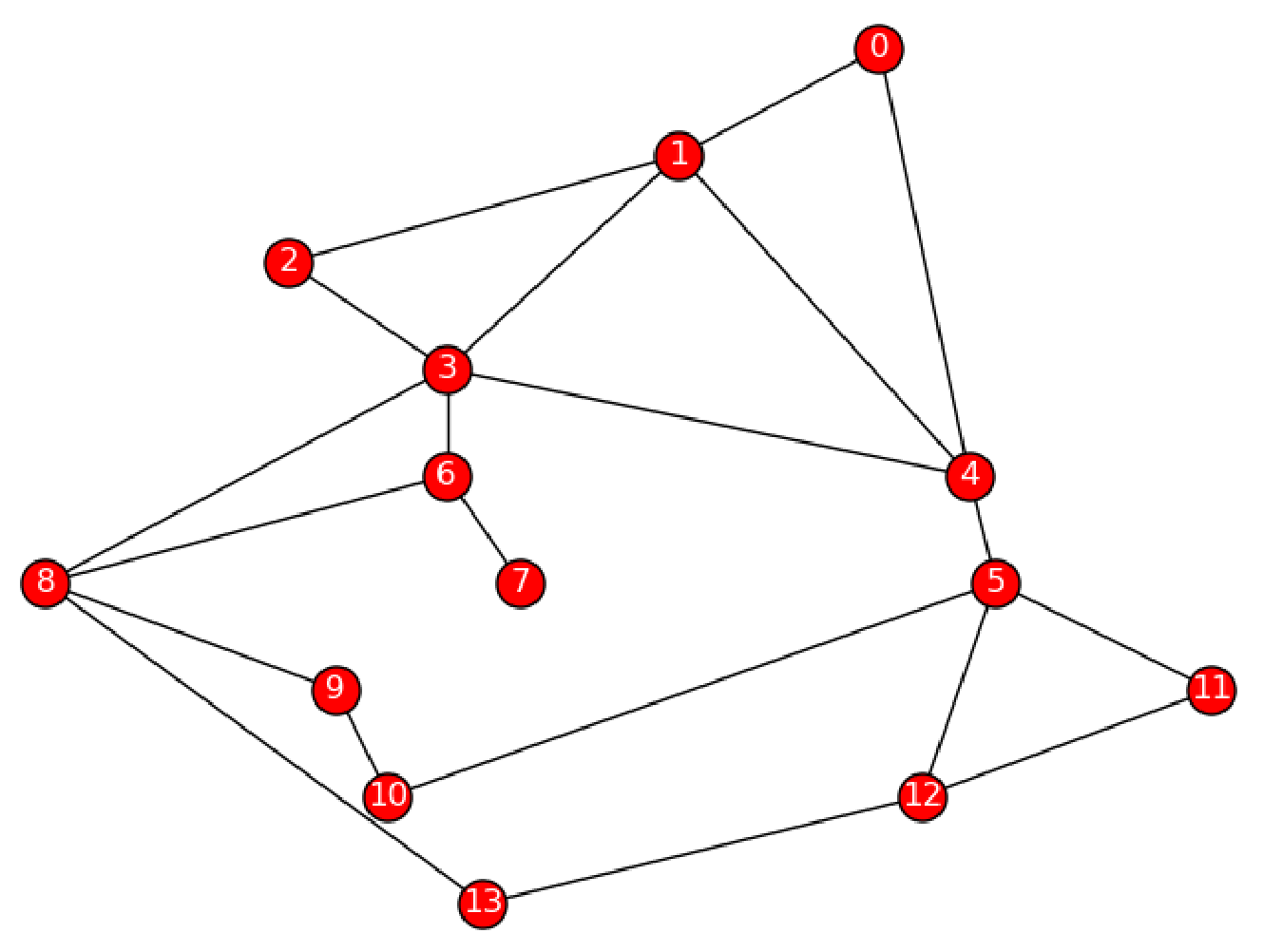}
    \caption{IEEE 14 bus system}
    \label{fig:ieee14}
\end{wrapfigure}
As another test case, we use the standard benchmark problem of the ``IEEE 14-bus'' 
system which represents a portion of the actual power grid of the Midwestern 
U.S.A. in the 1960s. 
It is one of the most widely used nontrivial test systems.
Consisting of 14 nodes (shown in Fig. \ref{fig:ieee14}), it induces an algebraic 
load flow system \eqref{equ:powerflow} of 26 Laurent polynomial equations in 
26 variables.
Consequently, the BKK bound computation, by definition, is the mixed volume of
26 polytopes in $\R^{26}$. Theorem \ref{thm:mainthmb} reduces this to a problem of
computing the normalized volume of a \emph{single} polytope in $\R^{26}$.
Table \ref{tab:ieee14} shows the CPU time consumed by the two approaches and
different programs.
%for computing $\mvol(Q_1,\dots,Q_n)$ against the CPU time consumed by
%\tech{MixedVol-3} and \tech{lrs} for computing
%$\vvol_n(\conv(Q_1 \cup \cdots \cup Q_n))$. 
%\cref{tab:powerflow-cmp} shows the speedup ratios achieve by this transformation.
%The speedup ratio is computed based on the best run time among the most well known
%packages \tech{DEMiCs}, \tech{MixedVol-2.0}, and \tech{Hom4PS-3}
%(all in serial computation mode) for mixed volume computation.
In this case, converting mixed volume computation into volume computation 
via Proposition \ref{pro:powerflow} significantly accelerated the computation of the 
BKK bound (i.e. generic root count) for the algebraic load flow equations 
induced by the IEEE 14-bus system.
In particular, using \tech{libtropicana}, this transformation achieved 
over \emph{77 fold reduction in CPU time consumption}!

\begin{table}[h]
    \centering
    \setlength{\tabcolsep}{10pt} % General space between cols (6pt standard)
    \begin{tabular}{llrrr}
    	\toprule
    	Method                         & Program             & BKK Bound & CPU time &       Speedup \\ \midrule
    	\multirow{3}{*}{Mixed volume}  & \tech{Hom4PS-3}     &    427680 & 10m33.3s &            -- \\
    	                               & \tech{MixedVol-2.0} &    427680 & 12m11.5s &            -- \\
    	                               & \tech{DEMiCs}       &    427680 & 12m41.0s &            -- \\ \midrule
    	\multirow{2}{*}{Adj. polytope} & \tech{lrs}          &    427680 &    27.9s & $22.70\times$ \\
    	                               & \tech{libtropicana} &    427680 &     8.2s & $77.23\times$ \\ \bottomrule
    \end{tabular}
    \caption{
        CPU time consumed by various programs for computing the BKK bound of 
        the algebraic load flow system \eqref{equ:powerflow} induced by 
        the IEEE 14-bus system with two different basic approaches ---
        computing the mixed volume of 26 polytopes v.s. 
        computing the normalized volume of the adjacency polytope.
        The ``speedup'' column indicates the speedup ratio achieved by the
        adjacency polytope approach over the best run time of \tech{Hom4PS-3} 
        using the conventional approach of mixed volume computation.
    }
    \label{tab:ieee14}
\end{table}

\begin{remark} \label{rmk:mvol-vs-vol}
    From the view point of computational complexity, we must note that the
    problem of computing (exact) volume of a single polytope is \emph{not} 
    inherently easier than the problem of computing (exact) mixed volume of 
    several polytopes \cite{dyer_complexity_1998}.
    %    Indeed, there are polytopes for which the volume computation is difficult
    %    while the corresponding mixed volume computation is trivial (e.g. zonotopes).
    The computational advantage we intend to highlight here is the potentially 
    less complicated geometry of $\conv(Q_1 \cup \cdots \cup Q_n)$ comparing to 
    the $n$ polytopes $Q_1,\dots,Q_n$.
    For instance, with the algebraic load flow system induced by the IEEE 14-bus
    example, the 26 supports are originally defined by a total of 128 points 
    (with duplicates) while their union contain only 54 points.
\end{remark}

%===============================================================================
\section{Implications in polyhedral homotopy method} \label{sec:homotopy}

% We conclude with a brief discussion of the implications of our results in the 
% polyhedral homotopy method for solving systems of Laurent polynomial equations.

\revision{In addition to the problem of computing upper bounds for root counts,
the results established in this paper also has strong implications in
the polyhedral homotopy method for solving systems of Laurent polynomial systems.}

The problem of solving systems of nonlinear polynomial equations is a fundamental 
problem in mathematics that has a wide range of applications.
One important numerical approach to this problem is the
\emph{homotopy continuation methods} 
where a given ``target'' polynomial system to be solved is
continuously deformed into a closely related system that is trivial to solve. 
With an appropriate construction, the corresponding solutions also vary 
continuously under this deformation forming ``solution paths'' that connect the 
solutions of the trivial system to the desired solutions of the target system.
Then numerical ``continuation methods'' can be applied to track these 
paths and reach the target solutions.
Over the last few decades, these methods have been proven to be reliable, 
efficient, pleasantly parallel, and highly scalable.

Among a great variety of different homotopy constructions, the 
\emph{polyhedral homotopy method}, developed by \name{B. Huber} and \name{B. Sturmfels}
\cite{huber_polyhedral_1995}, is among the most efficient and flexible homotopy
constructions (together with ``regeneration'' based methods, e.g., 
\cite{hauenstein_regeneration_2011,hauenstein_regenerative_2011}).
We shall first briefly review the construction of polyhedral homotopy:
Given a system $P(\boldx) = (p_1,\dots,p_n)$ of $n$ Laurent polynomials 
in general position where $\boldx = (x_1,\dots,x_n)$ and
$p_i = \sum_{\bolda \in S_i} c_{i,\bolda} \boldx^{\bolda}$,
one is interested in finding $\boldx \in (\C^*)^n$ for which 
$P(\boldx) = \boldzero$.
For a choice of \emph{lifting functions} 
$\boldsymbol{\omega} = (\omega_1,\dots,\omega_n)$ 
with each $\omega_i : S_i \to \Q$ having sufficiently generic images,
the polyhedral homotopy for $P$ with respect to the ``liftings'' 
$\boldsymbol{\omega}$ is given by
\begin{equation}
    \label{equ:polyhedral}
	H(\boldx,t) =
	\begin{cases}
		\sum_{\bolda \in S_1} c_{1,\bolda} \boldx^{\bolda} t^{\omega_1(\bolda)} \\
        \hspace{8ex}\vdots\\
		\sum_{\bolda \in S_n} c_{n,\bolda} \boldx^{\bolda} t^{\omega_n(\bolda)} . \\
	\end{cases}
\end{equation}
Clearly, $H(\boldx,1) \equiv P(\boldx)$.
It can also be shown that for $P$ in general position, the solutions of
$H(\boldx,t) = \boldzero$ varies smoothly as $t$ varies in $(0,1) \subset \R$
forming smooth \emph{solution paths}.
Moreover, all isolated solutions of $P(\boldx) = \boldzero$ in $(\C^*)^n$
can be obtained as end points of these solution paths at $t=1$.
Since for any fixed $t \in (0,1)$, $H(\boldx,t)$ has the same supports
as $P$ itself, the number of solution paths is precisely the BKK bound
$\mvol(\conv(S_1),\dots,\conv(S_n))$.
It is intuitively clear that the total number of paths is a key factor
in the overall computational complexity of the polyhedral homotopy method.
An apparent difficulty in the above construction is that at $t=0$,
$H(\boldx,t) = H(\boldx,0) \equiv \boldzero$ (or becomes undefined)
and hence the starting points of the paths cannot be identified.
This difficulty is surmounted via a process known as ``mixed cell computation''
\cite{huber_polyhedral_1995}.
Once the mixed cells are computed, the starting points of the solution paths
can be located easily and efficiently.
Then numerical continuation methods can be applied to trace these paths
and reach all isolated solutions of $P(\boldx) = \boldzero$ in $(\C^*)^n$.
The results established above have important implications in the application
of polyhedral homotopy method:

\begin{proposition}
    Let $P(\boldx) = P(x_1,\dots,x_n)$ be a system of $n$ Laurent polynomials 
    in general position.
    Under the assumptions of Theorem \ref{thm:mainthma} or Theorem \ref{thm:mainthmb},
    for almost all $n \times n$ matrix $A$, the polyhedral homotopies 
    \eqref{equ:polyhedral} constructed for $P$ and $A \cdot P$ define the 
    same number of solution paths.
\end{proposition}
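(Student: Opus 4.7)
The plan is to reduce this statement to a direct consequence of Theorems \ref{thm:mainthma} and \ref{thm:mainthmb} together with the standard construction of the polyhedral homotopy. The key observation is that the number of solution paths defined by the polyhedral homotopy \eqref{equ:polyhedral} for a system in general position is precisely the BKK bound of that system, since for every $t \in (0,1)$ the fiber $H(\boldx,t)$ has the same supports as the target system. Thus it suffices to show that $P$ and $A\cdot P$ have the same BKK bound for a generic $n\times n$ matrix $A$.

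First I would pin down the meaning of ``almost all'' by identifying the Zariski open conditions on $A$ needed for the argument. There are three: (i) $A$ is nonsingular, (ii) no term cancellations occur in the formation of $A\cdot P$, so that each component of $A\cdot P$ has support exactly $\tilde{S} := S_1 \cup \cdots \cup S_n$, and (iii) $A\cdot P$ is itself in general position in the sense of Theorem \ref{thm:bernshtein-b}. Conditions (i) and (ii) are evidently Zariski open and nonempty in the space of matrices, while condition (iii) is the complement of a proper algebraic subset (by Remark \ref{rmk:generic}), since the coefficients of $A \cdot P$ depend polynomially on the entries of $A$ and on the already-generic coefficients of $P$. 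The intersection of these three open sets is therefore a nonempty Zariski open set of matrices $A$, which gives the ``almost all'' quantifier.

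Next I would compute the two BKK bounds. By Theorem \ref{thm:bernshtein-a} the polyhedral homotopy for $P$ defines exactly
\[
    \mvol(\conv(S_1),\dots,\conv(S_n))
\]
solution paths. For $A$ in the open set above, property (ii) ensures $\supp(A\cdot P) = (\tilde{S},\dots,\tilde{S})$, so by Kushnirenko's Theorem (Theorem \ref{thm:kushnirenko}) the polyhedral homotopy for $A\cdot P$ defines
\[
    \mvol(\conv(\tilde{S}),\dots,\conv(\tilde{S})) = n!\,\vvol_n(\conv(\tilde{S}))
\]
solution paths. Under the assumption of Theorem \ref{thm:mainthma} or Theorem \ref{thm:mainthmb}, the two quantities coincide, and the proposition follows.

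The main obstacle I anticipate is purely bookkeeping: one must verify that the generic matrix $A$ used in the proofs of Theorems \ref{thm:mainthma} and \ref{thm:mainthmb} can simultaneously be chosen so that $A \cdot P$ is in general position, so that the polyhedral homotopy construction \eqref{equ:polyhedral} genuinely applies to $A \cdot P$ and its path count equals its BKK bound. This is handled by observing that in the proofs of the main theorems, the key step already established $\V^*(\init_{\boldalpha}(A\cdot P)) = \varnothing$ for every nonzero $\boldalpha \in \R^n$, which is exactly the general-position hypothesis of Theorem \ref{thm:bernshtein-b}. No further genericity of $A$ beyond what is already used in those proofs is required.
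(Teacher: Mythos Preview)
Your proposal is correct and matches the paper's reasoning. The paper does not give a separate proof of this proposition; it is presented as an immediate corollary of the preceding discussion, which already notes that the number of solution paths of the polyhedral homotopy equals the BKK bound of the target system, together with Theorems~\ref{thm:mainthma} and~\ref{thm:mainthmb} equating the two BKK bounds. Your write-up is in fact more explicit than the paper: you spell out the Zariski-open conditions on $A$ and correctly observe that the general-position requirement on $A\cdot P$ is already established inside the proofs of the main theorems (via $\V^*(\init_{\boldalpha}(A\cdot P))=\varnothing$), so no additional genericity is needed.
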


From a computational view point, this transformation from the problem of solving 
$P = \boldzero$ to the problem of solving its randomization $A \cdot P = \boldzero$
has the following potential benefits:
\begin{enumerate}
	\item 
		Turning the homotopy function $H$ into the ``unmixed'' form 
		where each equation involves the exact same set of terms
		significantly simplifies the scheme for simultaneous evaluation
		of $H$ and its partial derivatives 
        (e.g. \cite{kojima_efficient_2008,lee_hom4ps-2.0:_2008,verschelde_evaluating_2012})
        which is a particularly
		computationally intensive task in this method.
		
	\item
		The matrix $A$ can be chosen to improve the numerical condition of 
        the equation $H(\boldx,t) = \boldzero$ which plays a crucially 
        important role in the overall efficiency and stability of the 
        numerical homotopy methods
        \cite{allgower_numerical_1990,li_numerical_2003,sommese_numerical_2005}.
        
    \item
        As demonstrated in \S \ref{sec:results}, this transformation potentially
        accelerates the computation of BKK bound which is a crucial preprocessing 
        step that can be particularly time consuming for large systems.

	\item
		In this ``unmixed'' form, the collection of mixed cells required for
		locating the starting points of homotopy paths is equivalent to a
		simplicial subdivision of the Newton polytope 
		\cite{gao_mixed_2003,huber_polyhedral_1995,li_numerical_2003,verschelde_algorithm_1999}.
		Therefore if such a subdivision is used to compute the normalized volume,
		the starting points of homotopy paths can be located easily as by-products.
		
\end{enumerate}

%===============================================================================
\section{\revision{Concluding remarks}}\label{sec:conclusion}

\revision{
In this paper, we established sufficient conditions under which
the mixed volume of several convex polytopes is exactly the normalized volume
of the convex hull of their union.
Though originally motivated by geometric observations (\S~\ref{sec:motivation}),
our proofs are purely algebraic and relied on the theory of BKK bound.
We also generalized the result to semi-mixed volume
(mixed volume of semi mixed systems where polytopes may carry multiplicity)
which appears naturally in various counting problems
including the classical problem of counting Nash equilibria
\cite{Emiris2014,McKelvey1997,McLennan1997} in game theory.

We applied the resulting theory to a wide range of well known problems
in science and engineering including
Noonburg's neural network model, Kuramoto model for synchronization, 
load flow equations from electric engineering, 
and the tensor eigenvalue problem.
In all these cases, the root counting problem originally formulated as mixed volume
can be reduced to the problem of volume computation or much simplified
semi-mixed volume computation problem.
Even though asymptotically volume computation is not inherently easier than mixed volume computation
\cite{dyer_complexity_1998},
these transformations greatly reduced the total number of vertices and polytopes involved
in these cases,
and as a result we expect great reduction in time and memory requirements
when computing these root counts.
In the case of the load flow equations, our experiment with a standard test case problem
(IEEE 14 bus) shows a 77 fold reduction in CPU time!
These results have since found deeper applications to the study of Kuramoto model
\cite{ChenDavis2018Toric,Chen2018Counting}.

The results established here are closely related to the works by
Fr\'ed\'eric Bihan and Ivan Soprunov appeared around the same time~\cite{BihanSoprunov2017}
in which deeper analysis from a geometric view point were carried out.
On the algebraic side, it is reasonable to ask if our results can be
generalized to the theory of Newton-Okunkov bodies~\cite{KavehKhovanskii2012Newton} 
which are much more powerful generalization of Newton polytopes.
Preliminary studies produced some positive answers:
\begin{itemize}
    \item
        In the cases of Kuramoto equations induced by cycle graphs,
        not only the mixed volume of Newton polytopes
        but also the mixed volume of Newton-Okunkov bodies
        are reduced to volume~\cite{Chen2018Counting}.

    \item
        The techniques employed in this study is also capable of
        reducing the mixed volume of Newton-Okunkov bodies
        into mixed volume of Newton polytopes~\cite{Chen2018}
        under a similar condition.
\end{itemize}
The general situation, however, remains an open problem.
}

%===============================================================================
% \section*{Acknowledgments}
% The author would like to thank Liping Chen, Lixin Han, and Liangmin Zhou.

\appendix

%===============================================================================
%\section{Normalized volume}
%
%Throughout this note, the apparency of the standard Euclidean volume form 
%$\vvol_n$ for polytopes in $\R^n$ is always accompanied by the factor of $n!$.
%This scaled version $n! \vvol_n$ is known as the ``normalized volume''.
%More precisely, for a lattice polytope $Q \subset \Z^n$, the value 
%$n! \vvol_n(Q)$ is known as the \emph{$n$-dimensional normalized volume} of $Q$.
%The factor $n!$ is chosen so that the smallest possible $n$-dimensional 
%lattice polytope would have normalized volume precisely 1.
%Furthermore, it can be shown that the $n$-dimensional normalized volume of 
%an $n$-dimensional lattice polytope is always an integer.
%This fact is of particular importance in the case studies presented in
%\cref{sec:cases} as well as the applications of polyhedral homotopy where
%the normalized volume is used as a root count.

%===============================================================================
\section{Monotonicity of mixed volume} \label{sec:mixvol}
The mixed volume $\mvol(Q_1,\dots,Q_n)$, as a function that takes $n$ convex 
polytopes, monotone in each of its arguments in the sense that if 
$Q_1' \subseteq Q_1$ then $\mvol(Q_1',Q_2,\dots,Q_n) \le \mvol(Q_1,Q_2,\dots,Q_n)$.
The same applies for all arguments.
Since $Q_i \subseteq \tilde{Q} := \conv(Q_1 \cup \dots \cup Q_n)$ 
for each $i=1,\dots,n$, the inequality
\[
    \mvol(Q_1,\dots,Q_n) \le
    \mvol(\tilde{Q},\dots,\tilde{Q}) = 
    n! \vvol(\tilde{Q})
\]
always hold regardless of the relative position of the polytopes.
The present contribution shows that the equality can hold even when
each $Q_i$ is strictly contained in $\tilde{Q}$.

%===============================================================================
\section{Modifications to polyhedral homotopies}

The apparent limitations of the construction of the polyhedral homotopy
\eqref{equ:polyhedral} are that 
the target system $P(\boldx)$ is assumed to be in general position, 
zeros in $\C^n \setminus (\C^*)^n$ may not be reached, and 
the numerical condition of the equation $H(\boldx,t) = \boldzero$ may be poor.
These limitations are surmounted by modifications proposed in subsequent studies 
\cite{huber_bernsteins_1997,kim_numerical_2004,lee_hom4ps-2.0:_2008,li_bkk_1996,rojas_toric_1999}.
A commonly used extension of \eqref{equ:polyhedral} with respect to the same
liftings and target system is given by
\begin{equation*}
    H(\boldx,t) =
    \begin{cases}
        \sum_{\bolda \in S_1} 
        [(1-e^t)c^*_{1,\bolda} + e^t c_{1,\bolda}] 
        (B \boldx)^{\bolda} e^{\omega_1(\bolda) t} + (1-e^t) \epsilon_1^*\\
        \vdotswithin{=}\\
        \sum_{\bolda \in S_n} 
        [(1-e^t)c^*_{n,\bolda} + e^t c_{n,\bolda}] 
        (B \boldx)^{\bolda} e^{\omega_n(\bolda) t} + (1-e^t) \epsilon_n^* . \\
    \end{cases}
\end{equation*}
where $c_{i,\bolda}$ and $\epsilon_i$ are generic complex numbers
and $B \boldx = (b_1 x_1,\dots,b_n x_n)$ with $b_i \in \R^+$ is chosen to
properly improve the numerical stability.
It can be shown that as $t$ varies from $-\infty$ to $0$, the solutions
of $H(\boldx,t) = \boldzero$ also vary continuously forming smooth
solution paths that collectively reach all isolated zeros of the target system
$P(\boldx)$ in $\C^n$.
This extension has been adopted in 
\tech{PHoM} \cite{gunji_phom_2004},
\tech{Hom4PS-2.0} \cite{lee_hom4ps-2.0:_2008},
and \tech{Hom4PS-3} \cite{chen_hom4ps-3:_2014}.
A variation of it can also be found in recent versions of \tech{PHCpack} 
\cite{verschelde_algorithm_1999}.

%===============================================================================
\section{\tech{libtropicana}} \label{sec:tropicana}
The software package \tech{libtropicana}\footnote{%
    \url{https://github.com/chentianran/libtropicana}} is developed by the author specifically
to carry out the experiments shown in \S \ref{sec:results}.
Given a convex polytope in $\Z^n$, it computes a \emph{regular subdivision}
and also produces the normalized volume of the polytope as a byproduct.
It is based on a pivoting algorithm similar to the core algorithm of \tech{lrs}
\cite{avis_pivoting_1992}.
But unlike \tech{lrs}, which puts a special emphasis on memory efficiency
and accuracy, \tech{libtropicana} focuses on speed 
(potentially at the expense of higher memory consumption)
and moderate sized polytopes.
It is written completely in \tech{C++} with optional interface for leveraging 
\tech{BLAS} and \tech{spBLAS} (Sparse \tech{BLAS}) routines.
\tech{libtropicana} is open source software. 
Users may freely distribute its source under the terms of the LGPL license.

% \bibliographystyle{spmpsci}      % mathematics and physical sciences
% \bibliography{unmixing}

\end{document}